\numberwithin{equation}{section}
\theoremstyle{plain}
\newtheorem{thm}[equation]{Theorem}
\newtheorem{prop}[equation]{Proposition}
\newtheorem{lem}[equation]{Lemma}
\theoremstyle{definition}
\newtheorem{defn}[equation]{Definition}
\newtheorem{exm}[equation]{Example}
\theoremstyle{remark}
\newtheorem{rem}[equation]{Remark}
\renewcommand{\le}{\leqslant}
\renewcommand{\ge}{\geqslant}
\newcommand\blfootnote[1]{
  \begingroup
  \renewcommand\thefootnote{}\footnote{#1}
  \addtocounter{footnote}{-1}
  \endgroup}
\newcommand{\m}{\mathfrak{m}}
\newcommand{\Z}{\mathbb{Z}}
\newcommand{\Q}{\mathbb{Q}}
\newcommand{\R}{\mathbb{R}}
\newcommand{\C}{\mathbb{C}}
\newcommand \id {{\operatorname{id}}}
\newcommand\MF{{\operatorname{MF}}}
\newcommand\Hom{{\operatorname{Hom}}}
\newcommand\cone{{\operatorname{cone}}}
\newcommand\Ob{{\operatorname{Ob}}}
\newcommand\ST{{\operatorname{ST}}}
\newcommand\IHS{{\operatorname{IHS}}}
\newcommand\Spec{{\operatorname{Spec}}}
\newcommand\Perf{{\operatorname{Perf}}}
\newcommand\fgmod{{\operatorname{mod}}}
\newcommand\End{{\operatorname{End}}}
\newcommand{\fL}{\mathcal{L}}
\newcommand{\fC}{\mathcal{C}}
\newcommand{\fE}{\mathcal{E}}
\newcommand{\wQ}{\widehat{Q}}
\newcommand{\into}{\hookrightarrow}
\newcommand{\cliff}{\operatorname{Cliff}}
\newcommand{\wST}{\widehat{\text{ST}}}
\newcommand{\largewedge}{\mbox{\Large $\wedge$}}
\newcommand{\Mat}{\operatorname{Mat}}
\newcommand{\chara}{\operatorname{char}}
\newcommand{\ABS}{\operatorname{ABS}}
\newcommand{\BEH}{\operatorname{BEH}}
\newcommand{\stab}{\operatorname{stab}}
\newcommand{\coker}{\operatorname{coker}}
\newcommand{\MCM}{\operatorname{MCM}}
\newcommand{\DD}{\operatorname{D}^{\operatorname{b}}}
\newcommand{\Tor}{\operatorname{Tor}}
\newcommand{\Iso}{\operatorname{Iso}}
\newcommand{\tors}{\operatorname{tors}}
\def\darrow#1#2{\xtofrom[#2]{#1}}
\begin{document}
\title{Kn\"orrer Periodicity and Bott Periodicity}
\author{Michael K. Brown}
\begin{abstract}
The goal of this article is to explain a precise sense in which
Kn\"orrer periodicity in commutative algebra and Bott periodicity in
topological $K$-theory are compatible phenomena. Along the way, we prove an 8-periodic version of Kn\"orrer
periodicity for real isolated hypersurface singularities, and we
construct a homomorphism from the Grothendieck group of the homotopy category of matrix factorizations of a complex (real)
polynomial $f$ into the topological $K$-theory of its Milnor fiber
(positive or negative Milnor fiber). 
\end{abstract}
\maketitle

\tableofcontents

\blfootnote{\emph{Date:} \today}
\blfootnote{The author was partially supported by NSF Award DMS-0966600 and the University of Nebraska-Lincoln MCTP
grant (NSF Award DMS-0838463).}

\section{Introduction}
Let $k$ be a field. In this article, we study hypersurface rings of the form
$k[x_1, \dots, x_n]/(f)$ from
both an algebraic and topological point of view. An important algebraic invariant of such a ring is its homotopy category of
matrix factorizations, which we denote by $[\MF(k[x_1, \dots, x_n],
f)]$ (we recall the
definition of this
category in Section~\ref{mf}). Matrix factorizations were introduced by Eisenbud in
\cite{eisenbud1980homological} as a tool for studying the homological
behavior of modules over a hypersurface ring. More recently,
matrix factorizations have begun appearing in a wide variety of
contexts, for instance homological mirror symmetry
(e.g. \cite{katzarkov2008hodge}, by Katzarkov-Kontsevich-Pantev) and
knot theory (e.g. \cite{khovanov2008matrix}, by Khovanov-Rozansky). In
the present work, we continue the study of an interplay
between matrix factorizations and topological $K$-theory that was begun
in the inspiring paper \cite{buchweitz2012index} of Buchweitz-van Straten. 

A fundamental result in the theory of matrix factorizations is
Kn\"orrer's periodicity theorem:
\begin{thm}[\cite{knorrer1987cohen} Theorem 3.1]
\label{kpint}
Suppose $k$ is algebraically closed and $\chara(k) \ne
2$. If $f \in (x_1, \dots, x_n)
\subseteq k[[x_1, \dots, x_n]]$,
there is an equivalence of categories
$$[\MF(k[[x_1, \dots, x_n]],f)] \xrightarrow{\cong}
[\MF(k[[x_1, \dots, x_n,u,v]],f + u^2 + v^2)].$$
\end{thm}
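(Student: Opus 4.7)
The plan is first to simplify the quadratic form and then to construct an explicit equivalence. Since $k$ is algebraically closed and $\chara(k)\ne 2$, the element $i$ with $i^2=-1$ lies in $k$ and $2$ is invertible; the substitution $u\mapsto (u+v)/2$, $v\mapsto (u-v)/(2i)$ defines a $k$-algebra automorphism of $k[[u,v]]$ under which $u^2+v^2$ is carried to $uv$. Hence, setting $R:=k[[x_1,\dots,x_n]]$, it suffices to exhibit an equivalence
$$[\MF(R,f)]\xrightarrow{\cong}[\MF(R[[u,v]],\,f+uv)].$$

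I would construct this equivalence by defining an explicit functor $\Phi$ via the familiar $2\times 2$ block construction: given a matrix factorization $(\varphi\colon F\to G,\,\psi\colon G\to F)$ of $f$, extend scalars along $R\into R[[u,v]]$ and set
$$\Phi(\varphi,\psi)\;=\;\left(\;\begin{pmatrix}\varphi & u\cdot\id\\ v\cdot\id & -\psi\end{pmatrix},\;\begin{pmatrix}\psi & u\cdot\id\\ v\cdot\id & -\varphi\end{pmatrix}\;\right).$$
A direct block multiplication shows each composition equals $(f+uv)\cdot\id$; the analogous block pattern defines $\Phi$ on morphisms, and a short homotopy calculation confirms that $\Phi$ descends to the homotopy categories.

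The heart of the proof, and the step I expect to be the main obstacle, is to show that $\Phi$ is an equivalence. Full faithfulness can be extracted from a direct computation with the $\Hom$-complexes, producing explicit null-homotopies whenever a morphism in the image of $\Phi$ becomes null-homotopic in the target. For essential surjectivity, the standard route (following Kn\"orrer) is to pass through Eisenbud's correspondence between matrix factorizations and maximal Cohen-Macaulay modules, and argue on that side: given an MCM module $N$ over $R[[u,v]]/(f+uv)$, the quotient $N/uN$ is a module over $R[[u,v]]/(f+uv,u)\cong R[[v]]/(f)$; after discarding free summands, it corresponds to an MCM module over $R/(f)$ whose matrix factorization is the desired preimage. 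The technical content lies in controlling these reductions modulo $u$ via a Nakayama-type argument exploiting that $u$ is a non-zerodivisor on $R[[u,v]]/(f+uv)$, and in verifying that this assignment is inverse to $\Phi$ on isomorphism classes.
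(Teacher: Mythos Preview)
Your proposal is a faithful sketch of Kn\"orrer's original argument from \cite{knorrer1987cohen}, and as such it is correct in outline. The change of variables reducing $u^2+v^2$ to $uv$, the explicit $2\times 2$ block functor, and the essential surjectivity argument via reduction of MCM modules modulo $u$ are exactly the ingredients of Kn\"orrer's proof. One small remark: after identifying $R[[u,v]]/(f+uv,u)\cong R[[v]]/(f)$ you still need to pass from $R[[v]]/(f)$ to $R/(f)$; in Kn\"orrer's argument this is handled because $v$ acts as a non-zerodivisor on the relevant MCM module, but you should make that step explicit.

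The paper, however, does not reproduce Kn\"orrer's proof; it cites the theorem and then, in Section~\ref{periodicity}, derives a version of it (under the additional hypothesis that the hypersurface is $\IHS$) by a genuinely different route. There one takes the rank-one matrix factorization $X=(k[u,v]\darrow{u+iv}{u-iv}k[u,v])$ and shows that the functor $P\mapsto P\otimes_{\MF} X$ is a quasi-equivalence by invoking Dyckerhoff's compact-generator results and To\"en's Morita theory for dg categories (Proposition~\ref{k}): since $X$ compactly generates $[\MF^\infty(k[u,v],u^2+v^2)]$ and $k\hookrightarrow\End_{\MF}(X)$ is a quasi-isomorphism, the tensor functor is automatically a Morita equivalence, hence a quasi-equivalence between triangulated dg categories. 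Note that your functor $\Phi$ \emph{is} this tensor product (after the change of variables, $X$ becomes the rank-one factorization of $uv$, and $P\otimes_{\MF}X$ unwinds to your block matrices), so the two approaches define the same equivalence; they differ only in how the equivalence is \emph{verified}.

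Each approach buys something. Your hands-on argument works for arbitrary $f$ in the maximal ideal, with no $\IHS$ hypothesis, and needs no dg machinery. The paper's approach is more modular: once Proposition~\ref{k} is in place, proving periodicity amounts to exhibiting a single object $X$ with the right endomorphism dga, and this is precisely what allows the paper to establish the $8$-periodic real analogue (Theorem~\ref{rk}) by the same template, simply replacing $X$ by a suitable Clifford-module factorization of $-u_1^2-\cdots-u_8^2$.
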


This result plays an important role in the classification of local
hypersurface rings of finite maximal Cohen-Macaulay type; we refer the
reader to Chapter 9 of Leuschke-Wiegand's text \cite{leuschke2012cohen} for
details. Kn\"orrer's periodicity theorem also demonstrates that one cannot recover $f$ from its homotopy category
of matrix factorizations. 

The main goal of this article is explain a precise sense in which
Kn\"orrer periodicity is a manifestation of Bott periodicity in
topological $K$-theory. In Section~\ref{ch1}, we motivate this project
with a proof of an 8-periodic version of Kn\"orrer
periodicity for isolated hypersurface singularities over the real
numbers:

\begin{thm}
\label{thm1}
Let $f \in (x_1, \dots, x_n) \subseteq \R[x_1, \dots, x_n]$, and suppose $\R[x_1, \dots, x_n]/(f)$ has
an isolated singularity at the origin (i.e. $\dim_\R \frac{\R[[x_1, \dots,
x_n]]}{(\frac{\partial f}{\partial x_1}, \dots, \frac{\partial
  f}{\partial x_n})} < \infty$). Then there exists
an equivalence of triangulated categories
$$[\MF(\R[[x_1, \dots, x_n]],f)] \xrightarrow{\cong}
[\MF(\R[[x_1, \dots, x_n, u_1, \dots, u_8]],f - u_1^2 - \cdots - u_8^2)].$$
\end{thm}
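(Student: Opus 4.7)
The plan is to adapt the classical proof of Kn\"orrer periodicity to the real setting, replacing the splitting $u^2 + v^2 = (u+iv)(u-iv)$ available over $\C$ with a ``Clifford square-root'' of $-u_1^2 - \cdots - u_8^2$, and then exploiting the $8$-fold periodicity of real Clifford algebras, namely $\cliff_{0,8} \cong \Mat_{16}(\R)$, to conclude.

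First I would set up an external tensor product on matrix factorizations: for any $f \in R$ and $g \in S$, there is a natural functor
$$[\MF(R,f)] \times [\MF(S,g)] \to [\MF(R \otimes_\R S,\, f+g)]$$
sending a pair of matrix factorizations to their $\Z/2$-graded tensor product. A choice of matrix factorization $M_g$ of $g = -u_1^2 - \cdots - u_8^2$ over $\R[[u_1,\dots,u_8]]$ then gives a functor
$$\Phi_{M_g} \colon [\MF(\R[[x_1,\dots,x_n]],f)] \to [\MF(\R[[x_1,\dots,x_n,u_1,\dots,u_8]],\, f - u_1^2 - \cdots - u_8^2)],$$
and the theorem reduces to producing an $M_g$ for which $\Phi_{M_g}$ is an equivalence.

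Next I would construct $M_g$ from the Clifford algebra $\cliff_{0,8}$. Let $e_1, \dots, e_8$ denote its standard generators, so $e_i^2 = -1$ and $e_i e_j + e_j e_i = 0$ for $i \ne j$, and let $\mathbb{S} = \mathbb{S}_+ \oplus \mathbb{S}_-$ be the $\Z/2$-graded irreducible real $\cliff_{0,8}$-module, which has rank $16$ over $\R$. The odd endomorphism $A = \sum_{i=1}^8 u_i \, e_i$ of $\mathbb{S}[[u_1,\dots,u_8]]$ satisfies $A^2 = -(u_1^2 + \cdots + u_8^2)\cdot\id$, so its components $A_+ \colon \mathbb{S}_+[[u]] \to \mathbb{S}_-[[u]]$ and $A_- \colon \mathbb{S}_-[[u]] \to \mathbb{S}_+[[u]]$ assemble into a rank-$16$ matrix factorization $M_g$ of $-u_1^2 - \cdots - u_8^2$.

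Finally I would prove that $\Phi_{M_g}$ is an equivalence by invoking real Bott periodicity. The isomorphism $\cliff_{0,8} \cong \Mat_{16}(\R)$ says precisely that $\mathbb{S}$ implements a Morita equivalence between $\cliff_{0,8}$ and $\R$. Under Buchweitz's equivalence between matrix factorizations of an isolated hypersurface singularity and its singularity category (which is where the isolated singularity hypothesis enters), one can interpret $[\MF(\R[[u_1,\dots,u_8]],-u_1^2-\cdots-u_8^2)]$ in terms of $\Z/2$-graded modules over $\cliff_{0,8}$, and identify $\Phi_{M_g}$ with extension of scalars along the Morita bimodule $\mathbb{S}$. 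The main obstacle is to carry out this identification at the level of homotopy categories: concretely, one must exhibit an inverse functor $\Psi$ built from the dual spinor bimodule and produce explicit contracting homotopies for the compositions $\Psi \circ \Phi_{M_g}$ and $\Phi_{M_g} \circ \Psi$. These homotopies are the algebraic avatar of the Atiyah-Bott-Shapiro construction relating Clifford modules to classes in real topological $K$-theory --- the very connection that motivates this paper.
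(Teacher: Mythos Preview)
Your first two stages match the paper: the tensor product is the functor $\ST_\MF$ of Section~\ref{tsmf}, and your $M_g$ built from the irreducible graded $\cliff_{0,8}$-module is precisely the object $X$ in the proof of Theorem~\ref{rk}, realized there as the first-column module of $\Mat_{16}(\R)$ under the Buchweitz--Eisenbud--Herzog equivalence of Theorem~\ref{BEH}.

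The gap is in your final stage, and it comes with a mislocation of the hypothesis. The paper does \emph{not} build an explicit inverse $\Psi$ with contracting homotopies; it invokes the dg-categorical machinery of Dyckerhoff and To\"en. Proposition~\ref{k} reduces the problem to two checks on $M_g$ alone: that it is a compact generator of $[\MF^\infty(Q',q)]$, and that $\R \hookrightarrow \End_{\MF}(M_g)$ is a quasi-isomorphism. That reduction in turn rests on Proposition~\ref{TS}, which says $\widehat{\ST}_\MF$ is a Morita equivalence of dg categories whenever both inputs are $\IHS$ --- and \emph{this} is where the isolated-singularity hypothesis on $f$ actually enters, via Dyckerhoff's compact-generation theorems, not via Buchweitz's stabilization (which holds for any Gorenstein hypersurface). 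The two checks are then short: $(M_g \oplus M_g[1])^{\oplus 8} \cong E_q$ gives compact generation, and $\End_{\Mat_{16}(\R)}(\mathbb{S}) \cong \R$ together with a rank count on $H^1(\End_\MF(E_q))$ gives the endomorphism condition. Your alternative plan remains underspecified: you have not said what $\Psi$ is as a functor on $[\MF(\R[[x,u]], f+q)]$ (na\"ively tensoring with a dual spinor bimodule does not land back in $[\MF(\R[[x]],f)]$), and without the $\IHS$ machinery it is not clear how the hypothesis on $f$ would enter your argument at all.
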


We point out that the  ``period'' here is exactly 8; that is, for $1
\le l < 8$, it can happen that $$[\MF(\R[[x_1, \dots, x_n]], f)] \ncong
[\MF(\R[[x_1, \dots, x_n, u_1, \dots, u_l]], f
- u_1^2 - \cdots - u_l^2)].$$

Our proof relies heavily on machinery developed by Dyckerhoff
and To\"en in \cite{dyckerhoff2011compact} and
\cite{toen2007homotopy}. This result draws a distinction between the maximal Cohen-Macaulay
representation theory of hypersurface rings with ground field $\R$ and those
whose ground field is algebraically closed and has characteristic not
equal to 2, since the latter exhibit 2-periodic Kn\"orrer
periodicity. The maximal Cohen-Macaulay
representation theory of hypersurface rings with ground field $\R$
does not seem
to be well-studied, and we hope this work motivates further
investigation in
this direction.

The presence of 2- and
8-periodic versions of Kn\"orrer periodicity over $\C$ and $\R$,
respectively, suggests the
possibility of a compatibility between Kn\"orrer periodicity and
Bott periodicity. Such a compatibility statement is formulated and
proved in Section~\ref{ch2}. We state here the version of this result
over $\C$; a version over $\R$ is also proven in
Section~\ref{ch2} (Theorems~\ref{KPBPR} and~\ref{KPBP}).

\begin{thm}
\label{thm2}
Suppose $f \in (x_1, \dots, x_n) \subseteq \C[x_1, \dots, x_n]$
and either $\C[x_1, \dots, x_n]/(f)$ has an isolated singularity at the origin (i.e. $\dim_\C\frac{\C[[x_1, \dots,
x_n]]}{(\frac{\partial f}{\partial x_1}, \dots, \frac{\partial
  f}{\partial x_n})}< \infty$) or $f$ is quasi-homogeneous. Then there exists a commutative diagram
 $$
\xymatrix{
K_0[\MF(\C[x_1, \dots, x_n], f)] \ar[r]^-{\phi^\C_f} \ar[dd]^-{K}
& KU^0(B_\epsilon, F_f) \ar[d]^-{\beta} \\
& KU^0(B_\epsilon, F_f) \otimes KU^0(B_{\epsilon'}, F_{u^2+v^2}) \ar[d]^-{\ST_{KU}}\\
K_0[\MF(\C[x_1, \dots, x_n,u,v],
f + u^2+v^2)] \ar[r]^-{\phi^\C_{f + u^2 + v^2}} & KU^0(B_{\epsilon''}, F_{f
  + u^2+v^2})
}
$$
where $F_f$, $F_{u^2 + v^2}$, and $F_{f+u^2 + v^2}$ denote the Milnor
fibers of $f$, $u^2 + v^2$, and $f+u^2+v^2$; $\epsilon, \epsilon', \epsilon'' >0$;
$B_\epsilon$, $B_{\epsilon'}$, and  $B_{\epsilon''}$ are closed
balls of radius $\epsilon$, $\epsilon'$, and $\epsilon''$ in $\C^n$,
$\C^2$, and $\C^{n+2}$, respectively; $K$ is induced by the Kn\"orrer
functor; $\beta$ is the Bott
periodicity isomorphism; and $\ST_{KU}$ is given by the product in
relative $K$-theory followed by the inverse of the map induced by pullback along the Sebastiani-Thom
homotopy equivalence.
\end{thm}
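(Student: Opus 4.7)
The plan is to factor the argument into a multiplicativity statement for $\phi^\C$ relative to the Sebastiani-Thom equivalence, together with the identification of one specific matrix factorization with the Bott generator. To set up, recall that Knörrer's functor acts on matrix factorizations by external tensor product with the standard rank-one matrix factorization $M_0$ of $u^2+v^2$, namely $M_0 = \bigl(\C[u,v] \xrightarrow{u+iv} \C[u,v] \xrightarrow{u-iv} \C[u,v]\bigr)$; thus on Grothendieck groups $K([M]) = [M \boxtimes M_0]$.

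The first main step would be to establish a multiplicativity identity
$$
\phi^\C_{f+g}\bigl([M \boxtimes N]\bigr) = \ST_{KU}\bigl(\phi^\C_f([M]) \otimes \phi^\C_g([N])\bigr)
$$
for matrix factorizations $M$ of $f(x)$ and $N$ of $g(y)$ in disjoint variables. This should follow from the explicit construction of $\phi^\C$: a matrix factorization $(M^+, M^-, d^+, d^-)$ gives rise to a two-term complex of trivial bundles on a small ball that is acyclic away from the singular fiber, defining a class in $KU^0(B_\epsilon, B_\epsilon \setminus f^{-1}(0)) \cong KU^0(B_\epsilon, F_f)$. External tensor product of matrix factorizations corresponds to external tensor product of such bundle complexes on $B_\epsilon \times B_{\epsilon'}$, and by functoriality of relative K-theory this matches the external product of the corresponding relative K-theory classes. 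Pulling back along the Sebastiani-Thom homotopy equivalence then yields the displayed equality.

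The second main step is to identify $\phi^\C_{u^2+v^2}([M_0])$ with the Bott generator $b$ of $KU^0(B_{\epsilon'}, F_{u^2+v^2}) \cong \Z$. Under the construction above, $M_0$ yields the complex $\mathcal{O} \xrightarrow{u+iv} \mathcal{O}$ on $B_{\epsilon'} \subseteq \C^2$, whose relative class is (up to sign) the pullback of the standard Bott class along $u+iv$. Since $\beta$ is by definition multiplication by $b$, combining the two steps gives
$$
\phi^\C_{f+u^2+v^2}\bigl(K([M])\bigr) = \phi^\C_{f+u^2+v^2}\bigl([M \boxtimes M_0]\bigr) = \ST_{KU}\bigl(\phi^\C_f([M]) \otimes b\bigr) = \ST_{KU}\bigl(\beta(\phi^\C_f([M]))\bigr),
$$
which is the desired commutativity.

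The main obstacle is the multiplicativity statement, since it requires matching the algebraic external tensor product of matrix factorizations with the external product in relative topological K-theory through the Sebastiani-Thom homotopy equivalence, which is an equivalence between Milnor fibers rather than ambient balls. This forces one to choose models of the Milnor fibers and their neighborhoods on which the bundle-complex construction is visibly multiplicative, and then to trace this through the deformation used to realize the relative K-theory classes. Once this compatibility is in place, the Bott-class identification is a direct computation with the specific rank-one matrix factorization $M_0$.
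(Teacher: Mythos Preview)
Your proposal is correct and follows essentially the same route as the paper: the paper's key technical result (Proposition~\ref{compatible}) is precisely your multiplicativity identity $\phi^\C_{f+g}([M\boxtimes N]) = \ST_{KU}(\phi^\C_f([M])\otimes\phi^\C_g([N]))$, and the paper then identifies $\phi^\C_{u^2+v^2}$ of the rank-one factorization $\C[u,v]\darrow{u+iv}{u-iv}\C[u,v]$ with the Bott generator just as you do. Your diagnosis of the main obstacle is also on target: the paper handles it by passing to cone models $(CF_f, F_f)$ via explicit contractions $H\colon CF_f\to B_\epsilon$, computing the $L_1$-product there with an explicit splitting, and then building a homotopy to compare with the pullback along the Sebastiani--Thom map $G$.
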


The Sebastiani-Thom homotopy equivalence to which we refer in
Theorem~\ref{thm2} is discussed in Section~\ref{TStopp}.

The key construction in this section yields the
horizontal maps above; specifically, given a polynomial $f$ over the
complex (real) numbers, we build a map $\Phi^\C_f$ ($\Phi^\R_f$) that assigns
to a matrix factorization of a complex (real) polynomial $f$ a class in the topological $K$-theory of the Milnor fiber (positive or
negative Milnor fiber) of $f$; this map first appeared in
\cite{buchweitz2012index} in the setting of complex isolated
hypersurface singularities. We prove that this construction
induces a map $\phi^\C_f$ ($\phi^\R_f$) on the Grothendieck group of the homotopy
category of matrix factorizations of $f$, and we show that it
recovers the Atiyah-Bott-Shapiro construction when $f$ is a
non-degenerate quadratic over $\R$ or $\C$. The Atiyah-Bott-Shapiro
construction, introduced in Part III of \cite{atiyah1964clifford}, provides the
classical link between $\Z/2\Z$-graded modules over Clifford algebras and vector
bundles over spheres; the maps $\phi^\C_f$ and $\phi^\R_f$ we discuss in Section~\ref{ch2}
can be thought of as providing a
more general link between algebra and topology.

{\bf  Acknowledgements.} This work is adapted from my Ph.D. thesis at
the University of Nebraska-Lincoln. I must first of all thank my thesis advisor, Mark E.
Walker, for his support during my time as a graduate student at Nebraska. I thank Luchezar Avramov and Brian Harbourne for
their comments on preliminary versions of this paper, and also
Ragnar Buchweitz, Jesse Burke, Michael
Hopkins, and Claudia Miller for valuable conversations with regard to
this work. I owe special thanks to Hai Long Dao for pointing out to me
that one may use Proposition 3.3 in \cite{dao2013decent} to prove
Proposition~\ref{longresult} below. Finally, I thank the anonymous referee for his or her helpful suggestions.


\section{Kn\"orrer periodicity over $\R$}
\label{ch1}
In this section, we recall some foundational material concerning
matrix factorizations in commutative algebra, and we
exhibit an $8$-periodic version of Kn\"orrer periodicity for matrix
factorization categories associated to isolated hypersurface singularities over
the real numbers.

\subsection{Matrix factorization categories}
We provide some background on matrix factorization categories. Fix a
commutative algebra $Q$ over a field $k$ and an element $f$ of $Q$. Henceforth, when
we use the term ``dg category'', we mean ``$k$-linear differential
$\Z/2\Z$-graded category''.

We cite results on differential $\Z$-graded categories from
\cite{bertrand2011lectures} several times throughout this section; we
refer the reader to Section 5.1 of \cite{dyckerhoff2011compact} for a discussion
as to how one may reformulate the results in \cite{bertrand2011lectures}
so that they apply to the $\Z/2\Z$-graded setting. 


\subsubsection{Definitions and some properties}
\label{mf}

\begin{defn} 
\label{mfdef}
The dg category $\MF(Q,f)$ of \emph{matrix factorizations}
of $f$ over $Q$ is given by the following:

Objects in $\MF(Q,f)$ are pairs $(P,d)$, where $P$ is a
finitely generated projective $\Z/2\Z$-graded $Q$-module, and $d$ is
an odd-degree endomorphism of $P$ such that $d^2=f \cdot
\id_P$. Henceforth, we will often denote an object $(P,d)$ in
$\MF(Q,f)$ by just $P$.

The morphism complex of a pair of matrix factorizations $P, P'$,
which we will denote by \newline $\Hom_\MF(P, P'),$ is the $\Z/2\Z$-graded module of $Q$-linear maps from $P$ to $P'$ equipped with the differential $\partial$ given by
$$\partial(\alpha) = d' \circ \alpha - (-1)^{| \alpha |} \alpha \circ d$$
for homogeneous maps $\alpha: P \to P'$. 
\end{defn}

We will often express an object $P$ in $\MF(Q,f)$ with the notation
$$P_1 \darrow{d_1}{d_0} P_0,$$
where $P_1,P_0$ are the odd and even degree summands of $P$, and
$d_1,d_0$ are the restrictions of $d$ to $P_1$ and $P_0$, respectively.

A degree 0 morphism $\alpha$ in $\MF(Q,f)$ can be represented by a diagram of the following form:
\[
\begin{CD}
P_1 @>{d_1}>> P_0 @> {d_0} >> P_1 \\
@V{\alpha_1}VV  @V{\alpha_0}VV @ VV{\alpha_1}V\\
P_1' @>{d_1'}>> P_0' @> {d_0'} >> P_1'  \\
\end{CD}
\]

It is straightforward to check that $\alpha$ is a cycle if and only
if this diagram commutes. In fact, if $f \in Q$ is a non-zero-divisor,
it is easy to see that the left square commutes if and only if the
right square commutes.

\begin{rem}
\label{samerank}
If $P_1$ and  $P_0$ are free and $f$ is non-zero-divisor, $P_1$ and $P_0$ must have the same rank.
\end{rem}

Define $Z^0\MF(Q, f)$ to be the category with the same objects as
$\MF(Q,f)$ and with morphisms given by the degree 0 cycles in
$\MF(Q,f)$. When $Q$ is regular with finite Krull dimension and $f$ is
a regular element of $Q$ (i.e. $f$ is a non-unit, non-zero-divisor), $Z^0\MF(Q,f)$ is an exact category with the
evident family of exact sequences (\cite{orlov2003triangulated}
Section 3.1).

The \emph{homotopy category}, $[\MF(Q,f)]$, of the dg
category $\MF(Q,f)$ is defined to be the quotient of $Z^0\MF(Q,f)$ by
morphisms that are boundaries in $\MF(Q,f)$. That is, objects in
$[\MF(Q,f)]$ are the same as those of $\MF(Q,f)$, and the morphisms in $[\MF(Q,f)]$
between objects $P, P'$ are classes in $H^0\Hom_\MF(P, P')$.

\begin{defn}
We call a matrix factorization \emph{trivial} if it is a
direct sum of matrix factorizations that are isomorphic in $Z^0\MF(Q,f)$
to either
$$E \darrow{f \cdot \id_E}{\id_E} E$$
or
$$E \darrow{\id_E}{f \cdot \id_E} E$$
for some finitely generated projective $Q$-module $E$.
\end{defn}

The following result gives an alternative characterization for
when a morphism in $Z^0\MF(Q,f)$ is a boundary in $\MF(Q,f)$; the straightforward proof is omitted.

\begin{prop}
\label{trivialmf}
A morphism $\alpha: P \to P'$ in $Z^0\MF(Q,f)$ is a boundary in
$\MF(Q,f)$ if
and only if it factors through a
trivial matrix factorization in $Z^0\MF(Q,f)$.
\end{prop}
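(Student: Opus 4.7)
The plan is to prove both implications. For the ``if'' direction, suppose $\alpha = \gamma \circ \beta$ with $\beta \colon P \to T$ and $\gamma \colon T \to P'$ cycles and $T$ trivial. The first step is to observe that $\id_T$ is a boundary in $\MF(Q,f)$: on each summand of $T$ of the form $E \darrow{\id_E}{f \cdot \id_E} E$ (or the other type), a direct check produces a degree-$1$ endomorphism whose $\partial$-image is the identity on that summand. Combining these gives $\id_T = \partial(s)$ for some $s$, and then by the Leibniz rule together with $\partial \beta = \partial \gamma = 0$, one gets $\alpha = \gamma \circ \id_T \circ \beta = \partial(\gamma \circ s \circ \beta)$.

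For the ``only if'' direction, suppose $\alpha = \partial(h)$ for some degree-$1$ morphism $h \colon P \to P'$ with components $h_0 \colon P_0 \to P_1'$ and $h_1 \colon P_1 \to P_0'$. The key construction is to take $T$ to be the mapping cone $C(\id_P)$, i.e.\ $T_0 = P_0 \oplus P_1$, $T_1 = P_1 \oplus P_0$, with the block-matrix differentials $\begin{pmatrix} d_1 & \id \\ 0 & -d_0 \end{pmatrix}$ and $\begin{pmatrix} d_0 & \id \\ 0 & -d_1 \end{pmatrix}$. I would then define $\beta \colon P \to T$ as the inclusion $\beta_i = \binom{\id}{0}$ into the first summand (manifestly a chain map) and $\gamma \colon T \to P'$ by $\gamma_0 = \begin{pmatrix} \alpha_0 & h_1 \end{pmatrix}$ and $\gamma_1 = \begin{pmatrix} \alpha_1 & h_0 \end{pmatrix}$. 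The chain-map condition for $\gamma$ unwinds precisely to the chain-map conditions for $\alpha$ together with the two identities $\alpha_0 = d_1' h_0 + h_1 d_0$ and $\alpha_1 = d_0' h_1 + h_0 d_1$, which are the components of $\alpha = \partial(h)$; and $\gamma \circ \beta = \alpha$ follows immediately from the block-matrix multiplication.

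The main obstacle is verifying that $C(\id_P)$ really is trivial in the sense of the definition above, not merely contractible. I would handle this with an explicit upper-triangular change of basis on $T_0$ and $T_1$ with off-diagonal entries $\pm d_0$ and $\pm d_1$: using the relations $d_0 d_1 = d_1 d_0 = f \cdot \id$, the diagonal blocks can be cancelled, producing an isomorphism $T \cong (P_0 \darrow{\id}{f \cdot \id} P_0) \oplus (P_1 \darrow{f \cdot \id}{\id} P_1)$ onto a direct sum of trivial matrix factorizations. Every other step in the argument is a routine block-matrix verification that makes manifest use of both the chain-map property of $\alpha$ and the null-homotopy equation $\alpha = \partial(h)$.
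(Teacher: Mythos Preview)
Your argument is correct. The paper omits the proof entirely, calling it ``straightforward,'' so there is nothing to compare against; your write-up supplies exactly the kind of routine verification the authors had in mind, and every step (the contractibility of trivial factorizations, the Leibniz computation, the cone factorization, and the explicit change of basis showing $C(\id_P)\cong (P_0 \darrow{\id}{f} P_0)\oplus(P_1 \darrow{f}{\id} P_1)$) checks out.
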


We conclude this section with a technical result that will be used in the proof of Proposition~\ref{ABSmap}:

\begin{prop}
\label{cokerfree}
Let $P=(P_1 \darrow{d_1}{d_0} P_0)$ be a matrix
factorization of $f$ over $Q$. Assume $f$ is a non-zero-divisor. Then the following are equivalent:

\begin{itemize}
\item[(1)] $\coker(d_1)$ is isomorphic to $L/fL$ for some projective
  $Q$-module $L$.
\item[(2)] There exists a
trivial matrix factorization $E$ and a matrix factorization $E'$ that
is isomorphic in $Z^0\MF(Q,f)$ to one of the form 
$$F \darrow{\id_{F}}{f} F$$
such that
$P \oplus E'$ is isomorphic to $E$ in $Z^0\MF(Q,f)$.
\end{itemize}
\end{prop}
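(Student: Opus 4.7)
The direction $(2) \Rightarrow (1)$ is immediate: $\coker(d_1)$ commutes with direct sums, every trivial MF decomposes as a sum of type-1 pieces $(A \darrow{f\id_A}{\id_A} A)$ contributing $A/fA$ to $\coker(d_1)$ and type-2 pieces $(B \darrow{\id_B}{f\id_B} B)$ contributing $0$, and $\coker(d_1^{E'}) = 0$; so $\coker(d_1^P)$ is isomorphic to $A/fA$ for the projective $Q$-module $A$ appearing as the odd part of the type-1 portion of $E$.

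For $(1) \Rightarrow (2)$, my plan is to build an explicit isomorphism $P \oplus E' \cong T_L \oplus E''$ in $Z^0\MF(Q,f)$, where $T_L = (L \darrow{f\id_L}{\id_L} L)$, $E' = (L \darrow{\id_L}{f} L)$, and $E'' = (P_1 \darrow{\id_{P_1}}{f} P_1)$; the right-hand side is then trivial and $E'$ has the required form. Begin by fixing an isomorphism $\sigma\colon L/fL \xrightarrow{\sim} \coker(d_1^P)$. Projectivity of $L$ yields a lift $\phi_0\colon L \to P_0$ of $\sigma \pi_L$ against the quotient $\pi_P\colon P_0 \twoheadrightarrow \coker(d_1^P)$. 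Since $\phi_0(L) + d_1(P_1) = P_0$, the map $\Theta\colon L \oplus P_1 \to P_0$, $(l,p_1) \mapsto \phi_0(l) + d_1(p_1)$, is surjective; projectivity of $P_0$ furnishes a section with components $\mu'\colon P_0 \to L$ and $\tau'\colon P_0 \to P_1$, so that $\phi_0\mu' + d_1\tau' = \id_{P_0}$. A short diagram chase gives $\pi_L\mu' = \sigma^{-1}\pi_P$, which forces both $\mu'\phi_0 - \id_L$ and $\mu' d_1$ to land in $fL$; since $f$ is a non-zero-divisor, they factor uniquely as $f\xi'$ and $f\eta'$ for $Q$-maps $\xi'\colon L \to L$ and $\eta'\colon P_1 \to L$.

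The candidate isomorphism is then the pair
\[
\alpha(p_1,l) = \bigl(\eta'(p_1) + l,\; \tau' d_1(p_1) - d_0\phi_0(l)\bigr), \qquad \beta(p_0,l) = \bigl(\mu'(p_0) + fl,\; \tau'(p_0) - d_0\phi_0(l)\bigr).
\]
Checking that $(\alpha,\beta)$ is a morphism in $Z^0\MF$ reduces, after unpacking the block-matrix form of the two commutative squares, to the single identity $\eta' d_0 = \mu'$, which is immediate from $f\eta' = \mu' d_1$ together with $d_1 d_0 = f\id_{P_0}$ and the non-zero-divisor hypothesis.

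The main obstacle is to show $\alpha$ is invertible. This rests on two identities: $\tau' d_1 + (d_0\phi_0)\eta' = \id_{P_1}$, obtained from $\phi_0\mu' + d_1\tau' = \id_{P_0}$ by right-multiplying by $d_1$, left-multiplying by $d_0$, and dividing by $f$; and $\tau'\phi_0 + (d_0\phi_0)\xi' = 0$, obtained from the same identity by right-multiplying by $\phi_0$, substituting $\mu'\phi_0 = \id_L + f\xi'$, and cancelling $d_1$ (which is injective because $f$ is a non-zero-divisor). Granting these, an explicit two-sided inverse for $\alpha$ can be written down; invertibility of $\beta$ then follows from the MF morphism relations, or equivalently amounts to a concrete Schanuel isomorphism for the two projective $Q$-resolutions $0 \to P_1 \xrightarrow{d_1} P_0 \to L/fL \to 0$ and $0 \to L \xrightarrow{f} L \to L/fL \to 0$ of the $R$-module $L/fL$ — precisely the data furnished by condition (1).
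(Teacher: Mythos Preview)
Your argument is correct. The direction $(2)\Rightarrow(1)$ is fine, and for $(1)\Rightarrow(2)$ the identities you derive from the section $\phi_0\mu'+d_1\tau'=\id_{P_0}$ (namely $\tau'd_1+d_0\phi_0\eta'=\id_{P_1}$, $\tau'\phi_0+d_0\phi_0\xi'=0$, and the auxiliary $\xi'\mu'+\eta'\tau'=0$) do yield explicit two-sided inverses for both $\alpha$ and $\beta$; one finds for instance $\alpha^{-1}=\bigl(\begin{smallmatrix} d_0\phi_0 & \id \\ -f\xi' & -\eta'\end{smallmatrix}\bigr)$ and $\beta^{-1}=\bigl(\begin{smallmatrix} \phi_0 & d_1 \\ -\xi' & -\eta'\end{smallmatrix}\bigr)$. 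One small caution: the claim that invertibility of $\beta$ ``follows from the MF morphism relations'' once $\alpha$ is known to be invertible is not automatic in general---neither differential in the target is invertible---so you really do need the direct Schanuel-type verification you allude to.

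Your route differs from the paper's. The paper does not construct an explicit isomorphism; instead it produces, from the comparison maps and homotopies between the two projective resolutions of $\coker(d_1)\cong L/fL$, morphisms showing that $P$ and $(L\darrow{f}{\id}L)$ become isomorphic in the quotient of $Z^0\MF(Q,f)$ by morphisms factoring through objects of the form $(F\darrow{\id}{f}F)$, and then invokes a general lemma about idempotent complete additive categories to extract the stable isomorphism in $Z^0\MF(Q,f)$. Your approach is more elementary---it avoids that categorical lemma entirely---and more informative, since it names the witnesses explicitly: $E'=(L\darrow{\id}{f}L)$ and $E=(L\darrow{f}{\id}L)\oplus(P_1\darrow{\id}{f}P_1)$. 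The paper's approach, on the other hand, is cleaner to write and isolates a reusable mechanism.
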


We will use the following general fact about
idempotent complete categories. We suspect that this result is well-known to experts; we omit the purely formal proof.

\begin{lem}
\label{additive}
Let $\fC$ be an idempotent complete additive category, and let $\fE$ be a
collection of objects in $\fC$ that is 
\begin{itemize}
\item closed under isomorphisms,
\item closed under finite coproducts, and
\item closed under taking summands; that is, whenever $X$ is an object in $\fC$ such that $\id_X$ factors through an
object in $\fE$, $X$ is an object in $\fE$.
\end{itemize}
Denote by $\fL$ the quotient of $\fC$ by those morphisms that factor through an object in
$\fE$. If $X$ and $Y$ are objects in $\fC$, their images in $\fL$ are
isomorphic if and only if there exist objects $E_X$, $E_Y$ in $\fE$ such
that

$$X \oplus E_X \cong Y \oplus E_Y.$$

\end{lem}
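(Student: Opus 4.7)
\textit{Proof plan.}
The backward implication is essentially formal: given an isomorphism $\phi : X \oplus E_X \xrightarrow{\sim} Y \oplus E_Y$, the ``diagonal'' maps $f := \pi_Y \phi \iota_X : X \to Y$ and $g := \pi_X \phi^{-1} \iota_Y : Y \to X$ satisfy
\[
\id_X - gf = \pi_X \phi^{-1} \iota_{E_Y} \pi_{E_Y} \phi \iota_X,
\]
which factors through $E_Y \in \fE$, and symmetrically $\id_Y - fg$ factors through $E_X \in \fE$. Hence $f$ and $g$ descend to inverse isomorphisms in $\fL$.

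For the forward implication, choose $f : X \to Y$ and $g : Y \to X$ representing inverse isomorphisms in $\fL$, together with factorizations $\id_X - gf = \beta\alpha$ through some $E \in \fE$ (with $\alpha : X \to E$, $\beta : E \to X$) and $\id_Y - fg = \beta'\alpha'$ through some $E' \in \fE$. Form $u := \binom{f}{\alpha} : X \to Y \oplus E$ and $v := (g, \beta) : Y \oplus E \to X$. Since $vu = gf + \beta\alpha = \id_X$, the map $u$ is a split monomorphism; by idempotent completeness of $\fC$, the complementary idempotent $q := \id_{Y \oplus E} - uv$ splits off a summand, yielding $Y \oplus E \cong X \oplus C$, where $C$ is the image of $q$. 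To finish, it suffices to show $C \in \fE$: then $E_Y := E$ and $E_X := C$ provide the required isomorphism.

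The main obstacle is thus to verify $C \in \fE$, and I accomplish this by exhibiting a factorization of $q$ through an object of $\fE$. A direct computation yields
\[
q = \begin{pmatrix} \beta'\alpha' & -f\beta \\ -\alpha g & \id_E - \alpha\beta \end{pmatrix},
\]
and one checks that $q = t \circ s$ for
\[
s = \begin{pmatrix} \alpha' & 0 \\ -\alpha g & 0 \\ 0 & \id_E \end{pmatrix} : Y \oplus E \to E' \oplus E \oplus E, \qquad t = \begin{pmatrix} \beta' & 0 & -f\beta \\ 0 & \id_E & \id_E - \alpha\beta \end{pmatrix} : E' \oplus E \oplus E \to Y \oplus E.
\]
Since $E' \oplus E \oplus E \in \fE$ by closure under finite coproducts, $q$ factors through $\fE$. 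Letting $\iota : C \hookrightarrow Y \oplus E$ and $\pi : Y \oplus E \twoheadrightarrow C$ denote the splitting maps (so that $\pi\iota = \id_C$ and $\iota\pi = q$), we obtain $\id_C = \pi q \iota = (\pi t)(s\iota)$, so $\id_C$ factors through $E' \oplus E \oplus E \in \fE$. The summand-closure hypothesis on $\fE$ then forces $C \in \fE$, completing the argument.
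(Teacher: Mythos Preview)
Your proof is correct. The backward implication is handled cleanly, and for the forward implication your computation of $q$ and its factorization $q = ts$ through $E' \oplus E \oplus E$ checks out entry by entry; the final step $\id_C = \pi q \iota = (\pi t)(s \iota)$ then invokes exactly the summand-closure hypothesis as needed.

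There is nothing to compare against here: the paper explicitly omits the proof of this lemma, describing it as ``purely formal.'' Your argument supplies precisely the kind of elementary additive-category manipulation the author had in mind. One minor remark: you do not actually need idempotent completeness in full strength, since $vu = \id_X$ already exhibits $X$ as a retract of $Y \oplus E$, and in any additive category a split idempotent (here $uv$, with splitting given by $u$ and $v$) always has a complement---the image of $\id - uv$ exists automatically as a direct summand. So the hypothesis that $\fC$ is idempotent complete is used only implicitly to guarantee the category is well-behaved, but your proof would go through in any additive category where the relevant idempotents split, which they do here by construction.
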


We now prove Proposition~\ref{cokerfree}:

\begin{proof}
$(2) \Rightarrow (1)$: Since the cokernel of $d_1$ is isomorphic to
the
cokernel of 
$$d_1 \oplus \id_{F}: P_1 \oplus F \to P_0 \oplus F,$$ we
may assume $P$ is trivial. In this case, the result is obvious.

$(1) \Rightarrow (2)$: Choose a projective $Q$-module $L$ such that there exists an isomorphism $$\coker(d_1) \xrightarrow{\cong} L/fL.$$ We have
$Q$-projective resolutions

$$0 \to P_1 \xrightarrow{d_1}  P_0 \to \coker(d_1) \to 0$$
$$0 \to L \xrightarrow{f} L \to L/fL \to 0$$

Thus, there exist maps
$$\beta_i: P_i \to L \text{, } \gamma_i: L \to P_i$$
for $i=0,1$ making the following diagrams commute:
\[
\begin{CD}
0 @>>> P_1 @>{d_1}>> P_0 @> {} >> \coker(d_1)  @>>> 0\\
@. @V{\beta_1}VV  @V{\beta_0}VV @ VV{\cong}V @.\\
0 @>>>L  @>{f}>> L  @> {} >> L/fL  @>>>0 \\
\end{CD}
\]
\[
\begin{CD}
0 @>>>L  @>{f}>>L  @> {} >> L/fL  @>>>0 \\
@. @V{\gamma_1}VV  @V{\gamma_0}VV @ VV{\cong}V @.\\
0 @>>> P_1 @>{d_1}>> P_0 @> {} >> \coker(d_1) @>>>0 \\
\end{CD}
\]

Hence, we have maps
$$h_P: P_0 \to P_1 \text{, } h_{L}: L \to L$$
such that
$$ \gamma_1 \circ \beta_1 - \id_{P_1} = h_P \circ d_1 \text{, } \gamma_0 \circ \beta_0 - \id_{P_0}= d_1 \circ h_P.$$
$$ \beta_1 \circ \gamma_1 - \id_{L} = fh_{L} \text{, } \beta_0 \circ \gamma_0 - \id_{L} = fh_{L}.$$

We have commutative diagrams
$$
\xymatrix{
P_1 \ar[rr]^-{d_1} \ar[dd]^-{h_P \circ d_1} & & P_0 \ar[dd]^-{h_P}   & L  \ar[rr]^-{f \cdot \id_L} \ar[dd]^-{f \cdot h_L}& &L \ar[dd]^-{h_L}\\
& & & \\
P_1 \ar[rr]^-{\id_{P_1}} \ar[dd]^-{\id_{P_1}} & &P_1 \ar[dd]^-{d_1}    & L \ar[rr]^-{\id_{L}} \ar[dd]^-{\id_{L}} & & L  \ar[dd]^-{f \cdot \id_L}  \\
& & & \\
P_1 \ar[rr]^-{d_1}& &P_0    & L \ar[rr]^-{f \cdot \id_{L}}& & L \\
}
$$

Denote by $\fE$ the collection
of matrix factorizations of $f$ over $Q$ isomorphic in $Z^0\MF(Q,f)$ to
a matrix factorization of the form
$$E \darrow{\id_E}{f} E.$$

Notice that $Z^0\MF(Q,f)$ is an idempotent complete additive category,
and $\fE$ is closed under direct sums and direct summands in
$Z^0\MF(Q,f)$. Letting $\fL$ denote the quotient of $Z^0\MF(Q,f)$ by those
morphisms that factor through an object in $\fE$, we have that 
$$(P_1
\darrow{d_1}{d_0} P_0) \cong (L\darrow{f}{\id_{L}} L)$$
in $\fL$. The
result now follows from Lemma~\ref{additive}.

\end{proof}


\subsubsection{Triangulated structure}
\label{tristruc}

Suppose $Q$ is regular with finite Krull dimension and $f$ is a
regular element of $Q$. A feature of the homotopy category $[\MF(Q,f)]$ is that it may be equipped with a triangulated
structure in the following way (\cite{orlov2003triangulated} Section 3.1):

The shift functor maps the object
$$P=(P_1 \darrow{d_1}{d_0} P_0)$$
to the object
$$P[1]=(P_0 \darrow{-d_0}{-d_1} P_1).$$

Given a morphism $\alpha: (P_1 \darrow{d_1}{d_0} P_0) \to (P_1' \darrow{d_1'}{d_0'} P_0')$ in $Z^0\MF(Q,f)$, the \emph{mapping cone} of $\alpha$ is defined as follows:
$$\cone(\alpha)=(P_0' \oplus P_1 \darrow{\begin{pmatrix} d_0' & \alpha_1 \\ 0 &
-d_1\end{pmatrix}}{\begin{pmatrix} d_1' & \alpha_0 \\ 0 &
-d_0\end{pmatrix}} P_1' \oplus P_0)$$

There are canonical morphisms $P' \to \cone(\alpha)$ and
$\cone(\alpha) \to P[1]$ in $Z^0\MF(Q,f)$. Taking the distinguished
triangles in $[\MF(Q,f)]$ to be the triangles isomorphic in
$[\MF(Q,f)]$ to those of the form 
$$P \xrightarrow{\alpha}  P' \to \cone(\alpha) \to P[1],$$ $[\MF(Q,f)]$ may be equipped with the structure of a triangulated
category.

The Grothendieck group, $K_0[\MF(Q,f)]$, of the triangulated category $[\MF(Q,f)]$ is defined to be the free abelian group generated by isomorphism classes of $[\MF(Q,f)]$ modulo elements of the form $[P_1] - [P_2] +[P_3]$, where $P_1, P_2,$ and $P_3$ fit into a distinguished triangle in the following way:
$$P_1 \to P_2 \to  P_3 \to P_1[1].$$

\begin{rem}
The category $\MF(Q,f)$ is not always triangulated in the dg sense (see Section 4.4 of \cite{bertrand2011lectures} for the
definition of a triangulated dg category). When $\MF(Q, f)$ is triangulated in
the dg sense,
the induced triangulated structure on $[\MF(Q, f)]$ agrees with the
triangulated structure just described. 
\end{rem}

\begin{rem} 
\label{MCM} 
When $Q$ is a regular local ring and $f$ is a regular
  element of $Q$, one has an equivalence of triangulated categories
$$[\MF(Q,f)] \xrightarrow{\cong} \underline{\MCM}(Q/(f)),$$
where $\underline{\MCM}(Q/(f))$ denotes the \emph{stable category} of
maximal Cohen-Macaulay (MCM) modules over the ring $Q/(f)$. The stable
category of MCM modules is obtained by taking the quotient of the
category of MCM modules over $Q/(f)$ by those morphisms that factor
through a projective $Q/(f)$-module. The above equivalence
is given, on objects, by
$$(P_1 \darrow{d_1}{d_0} P_0) \mapsto \coker(d_1).$$

Matrix factorizations were first defined by Eisenbud in
\cite{eisenbud1980homological}; this interplay between matrix
factorizations and MCM modules over hypersurface rings provided the original motivation for the
study of matrix factorization categories.
\end{rem}


\subsubsection{Stabilization}
\label{stabilization}
Assume now that $Q$ is a regular local ring of Krull dimension $n$, and suppose $f$ is a regular element of
$Q$. Denote by $\DD(Q/(f))$ the bounded derived category of
$Q/(f)$, and set $\Perf(Q/(f))$ to be the
full subcategory of $\DD(Q/(f))$ given by perfect complexes. $\Perf(Q/(f))$ is a thick subcategory of
$\DD(Q/(f))$; define $\underline{\DD}(Q/(f))$ to be the Verdier
quotient of  $\DD(Q/(f))$ by $\Perf(Q/(f))$. In
\cite{buchweitz1986maximal}, Buchweitz
calls this quotient the \emph{stabilized derived category} of $Q/(f)$.

By \cite{buchweitz1986maximal}, the functor
$$\underline{\MCM}(Q/(f)) \to \underline{\DD}(Q/(f))$$
that sends an MCM module $M$ to the complex with $M$ concentrated in
degree 0 is a triangulated equivalence. Hence, composing with the
equivalence in Remark~\ref{MCM}, one has an equivalence
$$[\MF(Q, f)] \to \underline{\DD}(Q/(f))$$

Following \cite{dyckerhoff2011compact}, given an object $C$ in $\underline{\DD}(Q/(f))$,
we denote by $C^{\stab}$ the isomorphism class in $[\MF(Q,f)]$
corresponding to $C$ under the above equivalence
(``stab''  stands for ``stabilization''). In particular, thinking of the residue field $k$ of $Q/(f)$ as a
complex concentrated in degree 0, we may associate to $k$ an
isomorphism class
$k^{\stab}$ in $[\MF(Q, f)]$. We now construct an object $E_f$ in $\MF(Q,
f)$ that represents $k^{\stab}$; this construction appears in \cite{dyckerhoff2011compact}. Choose a regular system of parameters $x_1, \dots, x_n$ for $Q$, and consider
the Koszul complex
$$(\bigoplus_{i=0}^n \largewedge^i Q^n, s_0)$$
as a $\Z/2\Z$-graded complex of free $Q$-modules with
even (odd) degree piece given by the direct sum of the even (odd)
exterior powers of $Q^n$. Here, $s_0$ denotes the
$\Z/2\Z$-folding of the Koszul differential associated to $x_1, \dots,
x_n$.
Choose an expression of $f \in Q$ of the form
$$f=g_1x_1 + \dots + g_nx_n.$$
Fix a basis $e_1, \dots, e_n$ of $Q^n$, and set $s_1$ to be the
odd-degree endomorphism of $\bigoplus_{i=0}^n \largewedge^i Q^n$ given by exterior multiplication on
the left by $g_1e_1+\dots+g_ne_n$.
Define
$$E_f:=(\bigoplus_{i=0}^n \largewedge^i Q^n, s_0+s_1).$$

It is easy to check that $E_f$ is a matrix factorization of $f$. By
Corollary 2.7 in \cite{dyckerhoff2011compact}, $E_f$ represents $k^{\stab}$ in $[\MF(Q, f)]$.
In particular, $E_f$ does not depend on the choice of regular system of
parameters $x_1, \dots, x_n$ or coefficients $g_1, \dots,
g_n$ up to homotopy equivalence. Henceforth, we shall denote the dg algebra
$\End_{\MF}(E_f)$ by $A_{(Q,f)}$.


\subsection{The tensor product of matrix factorizations}
\label{tsmf}
Let $k$ be a field. We begin this section with a technical definition:
\begin{defn} 
\label{IHS}
Suppose $Q$ is a commutative algebra over $k$ and $f \in Q$. If the pair $(Q, f)$ satisfies
\begin{itemize}
\item $Q$ is essentially of finite type over $k$
\item $Q$ is equidimensional of dimension $n$
\item The module $\Omega^1_{Q/k}$ of K\"ahler differentials is locally
  free of rank $n$
\item The zero locus of $df \in \Omega^1_{Q/k}$ is a
  0-dimensional scheme supported on a unique closed point $\m$ of
  $\Spec(Q)$ with residue field $k$ and $f \in \m$
\end{itemize}
we shall call the pair $Q/(f)$ an \emph{isolated hypersurface singularity}, or
$\IHS$.
\end{defn} 

\begin{rem}
Our $\IHS$ condition above is precisely condition (B) in Section 3.2
of \cite{dyckerhoff2011compact}. As noted in loc. cit., if $Q/(f)$ and $Q'/(f')$ are $\IHS$, $Q \otimes_k Q' / (f \otimes 1 + 1 \otimes f')$
is as well.
\end{rem}
Suppose $Q$ and $Q'$ are commutative algebras over $k$, $f \in Q$, and $f' \in Q'$. Given objects $P$ and $P'$ in $\MF(Q,f)$, $\MF(Q', f')$, one can form their \emph{tensor product} over $k$:
$$P \otimes_{\MF} P':=((P_1 \otimes_k P_0') \oplus (P_0 \otimes_k P_1')  \darrow{\begin{pmatrix} d_1 \otimes \id_{P_0'} & \id_{P_0} \otimes d_1'  \\ -\id_{P_1} \otimes d_0' &
d_0 \otimes \id_{P_1'}\end{pmatrix}}{\begin{pmatrix} d_0 \otimes \id_{P_0'} & -\id_{P_1} \otimes d_1'  \\ \id_{P_0} \otimes d_0' &
d_1 \otimes \id_{P_1'}\end{pmatrix}} (P_0 \otimes_k P_0') \oplus (P_1 \otimes_k P_1')).$$

This construction first appeared in \cite{yoshino1998tensor}; it can
be thought of as a $\Z/2\Z$-graded analogue of the tensor product of
complexes. It is straightforward to check that $P \otimes_{\MF} P'$ is
an object in $\MF(Q \otimes_k Q', f \otimes 1 + 1 \otimes f')$. In fact, setting $f \oplus f':=f \otimes 1 + 1 \otimes f' \in Q
\otimes_k Q'$, and noting that there is a canonical map of complexes
$$\Hom_{\MF}(P, L) \otimes_k \Hom_{\MF}(P', L') \to \Hom_{\MF}(P \otimes_{\MF} P', L \otimes_{\MF} L'),$$
we have the following:

\begin{prop}
\label{littleTS}
There is a dg functor
$$\ST_{\MF}: \MF(Q, f) \otimes_k \MF(Q', f') \to \MF(Q \otimes_k Q', f \oplus f')$$
that sends an object $(P, P')$ to $P \otimes_{\MF} P'$.
\end{prop}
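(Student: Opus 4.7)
The plan is to unpack what data a dg functor out of a tensor product of dg categories requires, and to verify each piece separately. Since $\MF(Q,f)\otimes_k \MF(Q',f')$ is (by definition) the dg category whose objects are pairs and whose morphism complex between $(P,P')$ and $(L,L')$ is $\Hom_{\MF}(P,L)\otimes_k \Hom_{\MF}(P',L')$, it suffices to specify the object assignment, a chain map on morphism complexes, and to verify that these are compatible with composition and identities. The object assignment is already prescribed, so the real work is to (i) show $P\otimes_{\MF}P'$ is a matrix factorization of $f\oplus f'$, (ii) promote the canonical tensor-product-of-maps to a chain map, and (iii) check functoriality.

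For (i), I would simply compute the square of the displayed odd-degree endomorphism block by block. Using $d_1d_0=f\cdot\id$ and $d_0d_1=f\cdot\id$ on $P$, and analogously on $P'$, the off-diagonal blocks cancel (the minus signs in the definition of $P\otimes_{\MF}P'$ are precisely engineered for this), and the diagonal blocks collapse to $(f\otimes 1+1\otimes f')\cdot\id$. This is the same Koszul-sign trick that makes the ordinary tensor product of chain complexes into a chain complex.

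For (ii), given homogeneous morphisms $\alpha:P\to L$ in $\MF(Q,f)$ and $\alpha':P'\to L'$ in $\MF(Q',f')$, I would define $\alpha\otimes_{\MF}\alpha'$ on the four summands of $P\otimes_{\MF}P'$ as $\pm\,\alpha\otimes\alpha'$ with Koszul signs $(-1)^{|\alpha'|\cdot |p|}$ where $|p|$ is the $\Z/2\Z$-degree of the first factor; this is the unique sign convention that makes the would-be Leibniz rule
\[
\partial(\alpha\otimes_{\MF}\alpha')\;=\;\partial(\alpha)\otimes_{\MF}\alpha'\;+\;(-1)^{|\alpha|}\alpha\otimes_{\MF}\partial(\alpha')
\]
hold, and checking this is a direct block computation using the formula for $\partial$ in Definition~\ref{mfdef} and the explicit differentials on $P\otimes_{\MF}P'$ and $L\otimes_{\MF}L'$. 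Thus the prescription defines a degree-$0$ chain map on the relevant morphism complexes.

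For (iii), one checks that $(\alpha\otimes_{\MF}\alpha')\circ(\beta\otimes_{\MF}\beta')=(-1)^{|\alpha'|\,|\beta|}(\alpha\circ\beta)\otimes_{\MF}(\alpha'\circ\beta')$, which matches the composition law in the tensor product of dg categories, and that $\id_{P}\otimes_{\MF}\id_{P'}=\id_{P\otimes_{\MF}P'}$; both are immediate from the blockwise definition. The main obstacle throughout is purely bookkeeping of the Koszul signs in the $\Z/2\Z$-graded setting: once one fixes the sign convention for the differential on $P\otimes_{\MF}P'$ used in Definition (so that (i) works), the same convention forces the signs in the morphism formula in (ii), and (iii) then follows formally. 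With these verifications, the universal property of $\otimes_k$ for dg categories (as recorded in Section 5.1 of \cite{dyckerhoff2011compact}) packages the bilinear functor $\MF(Q,f)\times\MF(Q',f')\to\MF(Q\otimes_k Q',f\oplus f')$ into the desired dg functor $\ST_{\MF}$.
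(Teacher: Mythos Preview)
Your proposal is correct and is exactly the straightforward verification the paper has in mind; the paper does not give a proof of this proposition beyond remarking that $P\otimes_{\MF}P'$ is easily seen to lie in $\MF(Q\otimes_k Q',f\oplus f')$ and that there is a canonical map of complexes $\Hom_{\MF}(P,L)\otimes_k\Hom_{\MF}(P',L')\to\Hom_{\MF}(P\otimes_{\MF}P',L\otimes_{\MF}L')$. You have simply unpacked these two sentences with the requisite Koszul-sign bookkeeping.
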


\begin{rem}
\label{K0pairing}
It is straightforward to verify that $\ST_\MF$ induces a pairing
$$K_0[\MF(Q,f)] \otimes K_0[\MF(Q', f')] \to K_0[\MF(Q \otimes_k Q', f
\oplus f')].$$
\end{rem}

\begin{rem}
\label{reasonforname1}
The ``ST'' in the name $\ST_\MF$ stands for ``Sebastiani-Thom'', since 
this tensor product operation is related to the Sebastiani-Thom homotopy
equivalence discussed in Section~\ref{TStopp}. A precise sense in
which the tensor product of matrix factorizations is related to the
Sebastiani-Thom homotopy equivalence is illustrated by the proof of
Proposition~\ref{compatible} below; see Remark~\ref{reasonforname} for
further details.
\end{rem}

Now, suppose $Q/(f)$ and $Q'/(f')$ are $\IHS$. Set $Q'':=Q \otimes_k Q'$. We will denote by $\wQ$ the $\m$-adic
completion of $Q_\m$, where $\m$ is as in the definition of $\IHS$. Define $\widehat{Q'}$ and $\widehat{Q''}$ similarly, and define
$$\phi: \widehat{Q} \otimes_k \widehat{Q'} \to \widehat{Q''}$$
to be the canonical ring homomorphism. $\phi$ induces a dg functor
$$\MF(\phi): \MF(\wQ \otimes_k
\widehat{Q'}, f \oplus f') \to \MF(\widehat{Q''}, f \oplus f').$$
Set $\widehat{\ST}_{\MF}$ to be the composition of $\MF(\phi)$ with the tensor product functor
$$\MF(\wQ, f) \otimes_k \MF(\widehat{Q'}, f') \to \MF(\wQ \otimes_k \widehat{Q'}, f \oplus f').$$

\begin{prop}
\label{TS}
If $Q/(f)$ and $Q'/(f')$ are $\IHS$,
$$\widehat{\ST}_{\MF}: \MF(\wQ, f) \otimes_k \MF(\widehat{Q'}, f') \to
\MF(\widehat{Q''}, f\oplus f')$$

is a Morita equivalence of dg categories.
\end{prop}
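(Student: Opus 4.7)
The plan is to verify the Morita equivalence by the standard compact-generator criterion: exhibit compact generators on both sides whose images under $\widehat{\ST}_\MF$ agree up to homotopy, and check that the induced map on endomorphism dg algebras is a quasi-isomorphism. Dyckerhoff's compact-generator theorem for $\IHS$ supplies $E_f$, $E_{f'}$, and $E_{f \oplus f'}$ as compact generators of $[\MF(\wQ, f)]$, $[\MF(\widehat{Q'}, f')]$, and $[\MF(\widehat{Q''}, f \oplus f')]$, respectively; combining this with the compatibility of Morita equivalence and the symmetric monoidal structure $\otimes_k$ on the homotopy category of dg categories (\cite{toen2007homotopy}), the pair $(E_f, E_{f'})$ compactly generates $\MF(\wQ, f) \otimes_k \MF(\widehat{Q'}, f')$.

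The next step is to identify $\widehat{\ST}_\MF(E_f, E_{f'})$ with $E_{f \oplus f'}$ in $[\MF(\widehat{Q''}, f \oplus f')]$. I would fix regular systems of parameters $x_1, \dots, x_n$ for $\wQ$ and $y_1, \dots, y_m$ for $\widehat{Q'}$, together with expressions $f = \sum_i g_i x_i$ and $f' = \sum_j h_j y_j$. Because $\wQ$ and $\widehat{Q'}$ share the residue field $k$, the concatenation $x_1,\dots,x_n,y_1,\dots,y_m$ is a regular system of parameters for $\widehat{Q''}$, with corresponding expression $f \oplus f' = \sum_i g_i x_i + \sum_j h_j y_j$. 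The Koszul construction of Section~\ref{stabilization} is multiplicative: the exterior algebra on $n+m$ generators $\Z/2\Z$-splits as the tensor product of the two smaller exterior algebras, and each of the pieces $s_0$, $s_1$ of the matrix factorization differential splits accordingly. Hence $\widehat{\ST}_\MF(E_f, E_{f'})$ is isomorphic to $E_{f \oplus f'}$ as an object of $\MF(\widehat{Q''}, f \oplus f')$, and is in particular a compact generator of the target.

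It remains to show that $\widehat{\ST}_\MF$ induces a quasi-isomorphism
$$A_{(\wQ, f)} \otimes_k A_{(\widehat{Q'}, f')} \longrightarrow A_{(\widehat{Q''}, f \oplus f')}$$
of dg algebras. Since the components of $E_f$ and $E_{f'}$ are finitely generated free, the left side agrees on the nose with $\End_\MF(E_f \otimes_\MF E_{f'})$ computed over $\wQ \otimes_k \widehat{Q'}$, and the right side is its base change along $\phi \colon \wQ \otimes_k \widehat{Q'} \to \widehat{Q''}$. The $\IHS$ hypothesis forces $H^*(A_{(\wQ, f)})$ and $H^*(A_{(\widehat{Q'}, f')})$ to be finite dimensional over $k$ (this is built into the Dyckerhoff compactness argument), so the cohomology of their $k$-tensor product is also finite dimensional, and in particular annihilated by a power of the preimage of $\m''$ under $\phi$. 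Flatness of $\phi$ commutes base change past cohomology, and any module supported at $\m''$ is unchanged by this completion, which yields the required quasi-isomorphism.

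The main obstacle is precisely this last step: controlling the passage from the uncompleted tensor product $\wQ \otimes_k \widehat{Q'}$ to $\widehat{Q''}$ on the endomorphism dg algebra. Without the $\IHS$ assumption the cohomology of $A_{(\wQ,f)}$ need not be finite dimensional over $k$, and the base change along $\phi$ could genuinely change it; the $\IHS$ hypothesis pins the relevant cohomology to the maximal ideal of interest and thereby makes the completion harmless.
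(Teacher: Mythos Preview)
Your strategy is the paper's: reduce to the endomorphism dg algebras of the compact generators $E_f$, $E_{f'}$, $E_{f\oplus f'}$, using the multiplicativity $E_f \otimes_\MF E_{f'} \cong E_{f\oplus f'}$. The divergence is in how you pass from $\wQ \otimes_k \widehat{Q'}$ to $\widehat{Q''}$. The paper does \emph{not} argue directly about $\phi$; it instead descends to the essentially-of-finite-type localizations $Q_\m$, $Q'_{\m'}$, $Q''_{\m''}$, invokes \cite{dyckerhoff2011compact} Section~6.1 for the quasi-isomorphism $A_{(Q_\m,f)}\otimes_k A_{(Q'_{\m'},f')}\to A_{(Q''_{\m''},f\oplus f')}$ at that level, and then uses the local-to-complete comparison (Theorem~5.7 of \cite{dyckerhoff2011compact}) on each factor separately, concluding by a two-out-of-three argument in a commutative square. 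This avoids ever analyzing the ring map $\phi$ itself.

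Your direct route is tidier in spirit, but one sentence needs care: ``Flatness of $\phi$ commutes base change past cohomology.'' The ring $\wQ \otimes_k \widehat{Q'}$ is in general \emph{not} Noetherian (a tensor product of power series rings over $k$), so the usual ``completion is flat'' does not apply out of the box, and $\phi$ is not obviously flat. What rescues the argument is precisely your finite-dimensionality observation: since $H^*\bigl(A_{(\wQ,f)}\otimes_k A_{(\widehat{Q'},f')}\bigr)$ is finite-dimensional over $k$, it is filtered by copies of $k$, and $\Tor_i^{\,\wQ\otimes_k\widehat{Q'}}(k,\widehat{Q''})=0$ for $i>0$ because the sequence $x_1,\dots,x_n,y_1,\dots,y_m$ is regular in $\wQ\otimes_k\widehat{Q'}$ and its Koszul complex resolves $k$ over both rings. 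That gives the required isomorphism on cohomology without flatness. So either replace the flatness claim by this explicit $\Tor$-vanishing, or follow the paper's descent-to-localizations maneuver, which sidesteps the non-Noetherian issue entirely.
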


\begin{rem} We emphasize that Proposition~\ref{TS} is really a straightforward application of several results
in \cite{dyckerhoff2011compact}; we include a proof for completeness. We refer the reader to Section 4.4 of \cite{bertrand2011lectures} for the definition of a Morita
equivalence of dg categories.
\end{rem}

\begin{proof}
Let $\m$ and $\m'$ be the maximal ideals of $Q$ and $Q'$ arising in
Definition~\ref{IHS}. Suppose $Q_\m$ and $Q'_{\m'}$ have Krull dimensions $n$ and $m$,
respectively. $Q_\m$ and $Q'_{\m'}$ are regular local rings; choose regular systems of parameters $x_1, \dots, x_n$ and $y_1,
\dots, y_m$ in $Q_\m$ and $Q'_{\m'}$, and choose expressions
$$f=g_1x_1 + \dots + g_nx_n$$
$$f'=h_1y_1 + \dots + h_my_m$$
of $f$ and $f'$. Use these expressions to construct the dga's $A_{(Q_\m,
  f)}$ and $A_{(Q'_{\m'}, f')}$, as in Section~\ref{stabilization}.

Note that $x_1, \dots, x_n$ and $y_1, \dots, y_m$ form regular
systems of parameters in
$\wQ$ and $\widehat{Q'}$ as well, so we may use these expressions to
construct $A_{(\widehat{Q}, f)}$ and $A_{(\widehat{Q'}, f')}$. Also, $x_1 \otimes 1, \dots, x_n \otimes 1, 1 \otimes y_1, \dots, 1 \otimes
y_m$ is a regular system of parameters in $Q''_{\m''}$, where
$\m'':=\m \otimes 1 + 1 \otimes \m'$, so we may
use the expression
$$f \oplus f' = (g_1x_1 \otimes 1) +  \dots + (g_nx_n \otimes 1) + (1 \otimes
h_1y_1) +\dots + (1 \otimes
h_my_m)$$
to construct $A_{(Q''_{\m''}, f \oplus f')}$ and $A_{(\widehat{Q''} , f \oplus f')}$.

By Section 6.1 of \cite{dyckerhoff2011compact}, we have a quasi-isomorphism
$$F: A_{(Q_{\m}, f)} \otimes_k A_{(Q'_{\m'}, f')} \xrightarrow{\cong}
A_{(Q''_{\m''} , f \oplus f')}.$$
We also have a canonical map
$$G: A_{(\wQ, f)} \otimes_k A_{(\widehat{Q'}, f')} \to A_{(\widehat{Q''} , f \oplus f')}.$$
By the proof of Theorem 5.7 in \cite{dyckerhoff2011compact}, the inclusions
$$A_{(Q_\m, f)}  \into A_{(\wQ, f)}$$
$$A_{(Q'_{\m'}, f')} \into A_{(\widehat{Q'}, f')}$$
$$A_{(Q''_{\m''}, f \oplus f')} \into A_{(\widehat{Q''}, f \oplus f')}$$
are all quasi-isomorphisms. Since a tensor product of Morita equivalences is again a Morita
equivalence (\cite{bertrand2011lectures} Section 4.4), it follows that the
map
$$A_{(Q_\m, f)} \otimes^{}_k A_{(Q'_{\m'}, f')} \to A_{(\wQ, f)}
\otimes^{}_k A_{(\widehat{Q'}, f')}$$
is a Morita equivalence.

We have the following commutative square:
$$
\xymatrix{
A_{(Q_\m, f)} \otimes^{}_k A_{(Q'_{\m'}, f')} \ar[r] \ar[d] & A_{(\wQ, f)} \otimes^{}_k A_{(\widehat{Q'}, f')} \ar[d]^-{G} \\
A_{(Q''_{\m''}, f \oplus f')} \ar[r] & A_{(\widehat{Q''}, f\oplus f')} \\
}
$$

It follows that $G$ is a Morita equivalence.

One may think of a dga as a dg category with a
single object. Adopting this point of view, we have inclusion functors 
$$i: A_{(\wQ, f)} \into \MF(\wQ,f)$$
$$j: A_{(\widehat{Q'},
  f')} \into \MF(\widehat{Q'},f')$$
$$l: A_{(\widehat{Q''}, f\oplus f')} \into \MF(\widehat{Q''},f \oplus
f')$$
Combining Theorem 5.2 and
Lemma 5.6 in
\cite{dyckerhoff2011compact}, we conclude that $i$, $j$, and $l$ are
Morita equivalences. In particular, we have that
$$i \otimes j: A_{(\wQ, f)} \otimes^{}_k A_{(\widehat{Q'},
  f')} \to \MF(\wQ,f)
\otimes^{}_k \MF(\widehat{Q'} , f')$$ 
is a Morita equivalence.

Finally, consider the following commutative diagram:
$$
\xymatrix{
A_{(\wQ, f)} \otimes^{}_k A_{(\widehat{Q'},
  f')} \ar[r]^-{i \otimes j} \ar[d]^{G} & \MF(\wQ,f)
\otimes^{}_k \MF(\widehat{Q'} , f') \ar[d]^-{\wST_{\MF}} \\
A_{(\widehat{Q''}, f\oplus f')} \ar[r]^-{l} & \MF( \widehat{Q''} ,f \oplus f') \\
}
$$

Since the left-most vertical map and both horizontal maps are
Morita equivalences, $\wST_{\MF}$ is as well.

\end{proof}

\begin{rem}
\label{polyTS}
Using Theorem 4.11 of \cite{dyckerhoff2011compact} along with a similar argument to the one above, one
may show that, under the assumptions of Proposition~\ref{TS}, the functor
$$\MF(Q, f) \otimes_k \MF(Q', f') \to \MF(Q'', f \oplus f')$$
given by tensor product of matrix factorizations is also a Morita
equivalence. 
\end{rem}


\subsection{Matrix factorizations of quadratics}
\label{clifford}
Fix a field $k$ such that $\chara(k) \ne 2$ and a finite-dimensional vector
space $V$ over $k$. Let $q:V \to k$ be a 
quadratic form, and let $\cliff_k(q)$ denote the Clifford algebra associated to
$q$. $\cliff_k(q)$ is a $\Z/2\Z$-graded $k$-algebra; let $\fgmod_{\Z/2\Z}(\cliff_k(q))$ denote the category of finitely generated
$\Z/2\Z$-graded left modules over $\cliff_k(q)$. Henceforth, when we
refer to a module over a Clifford algebra, we will always mean it to
be a left module.

Assume $q$ is non-degenerate, and choose a basis $\{e_1, \dots,
e_n\}$ of $V$ with respect to which $q$ is \emph{diagonal}; that is,
$$q=a_1x_1^2 + \dots + a_nx_n^2 \in S^2(V^*)$$
where the $x_i$ comprise the dual basis corresponding to the $e_i$,
and the $a_i$ are nonzero elements of $k$. Denote by $Q$ the
localization of $S(V^*)$ at the ideal $(x_1, \dots, x_n)$.

The following theorem, due to Buchweitz-Eisenbud-Herzog, yields a relationship between Clifford modules
and matrix factorizations of non-degenerate quadratic forms:

\begin{thm}[\cite{buchweitz1987cohen}]
\label{BEH}
There is an equivalence of $k$-linear categories
$$\fgmod_{\Z/2\Z}(\cliff_k(q)) \xrightarrow{\cong} [\MF(\wQ, q)].$$ 
\end{thm}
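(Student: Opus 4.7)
The plan is to define the functor from left to right directly: for a $\Z/2\Z$-graded Clifford module $M$, set
$$\BEH(M) := \bigl(M \otimes_k \wQ,\ d_M\bigr), \qquad d_M(m \otimes 1) := \sum_{i=1}^n (e_i \cdot m) \otimes x_i,$$
extended $\wQ$-linearly. Because $q$ is diagonal in the chosen basis, the Clifford relations read $e_i^2 = a_i$ and $e_i e_j + e_j e_i = 0$ for $i \ne j$; combined with commutativity of the $x_i$ in $\wQ$, this gives $d_M^2 = (\sum_i a_i x_i^2)\cdot\id = q \cdot \id$ at once, so $\BEH(M) \in \MF(\wQ, q)$. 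A $\cliff_k(q)$-linear map $f \colon M \to N$ extends to $f \otimes \id$, which commutes with $d_M, d_N$ precisely by Clifford linearity, so $\BEH$ is a well-defined $k$-linear functor into $[\MF(\wQ, q)]$.

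Full faithfulness I would establish by filtering morphisms via the maximal ideal $\m = (x_1, \dots, x_n) \subseteq \wQ$. Given a cycle $\alpha \colon \BEH(M) \to \BEH(N)$, reduce modulo $\m$ to obtain the constant part $\alpha_0 \colon M \to N$. The cycle condition at first order in each $x_i$ yields $e_i \alpha_0 = \alpha_0 e_i$, so $\alpha_0$ is $\cliff_k(q)$-linear. A Nakayama-style argument, together with Proposition~\ref{trivialmf}, shows that $\alpha - \alpha_0 \otimes \id \in \m \cdot \Hom_\MF(\BEH(M), \BEH(N))$ is a boundary, while conversely $\BEH(f)$ is null-homotopic only when $f = 0$ (since the constant term recovers $f$). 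This yields a bijection of Hom sets.

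The main obstacle is essential surjectivity. Here the key observation is that $\BEH$ applied to the regular Clifford module is, up to isomorphism in $[\MF(\wQ, q)]$, the stabilization generator of Section~\ref{stabilization}: writing $\cliff_k(q) \cong \bigoplus_i \largewedge^i V$ via the symbol map as $\Z/2\Z$-graded $k$-vector spaces, $d_{\cliff_k(q)}$ unfolds into the sum of the Koszul differential for $x_1, \dots, x_n$ and left exterior multiplication by $\sum_i a_i x_i e_i$, which matches $E_q$ for the expression $q = \sum_i (a_i x_i) x_i$. Since $E_q$ represents $k^{\stab}$ and compactly generates $[\MF(\wQ, q)]$ by \cite{dyckerhoff2011compact}, and since $\cliff_k(q)$ is finite-dimensional semisimple (so every object of $\fgmod_{\Z/2\Z}(\cliff_k(q))$ is a summand of $\cliff_k(q)^{\oplus N}$), the fully faithful image of $\BEH$ is an idempotent-complete, additive subcategory of $[\MF(\wQ, q)]$ containing a generator, and hence exhausts it. The crux is verifying the identification $\BEH(\cliff_k(q)) \simeq E_q$ carefully enough to transport generation from one side to the other; this is a direct model-level computation rather than a dg-algebra formality result.
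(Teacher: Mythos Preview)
The paper does not prove Theorem~\ref{BEH}; it is cited from Buchweitz--Eisenbud--Herzog, with the explicit functor $\Theta$ referenced to Yoshino (Theorem~14.7). Your $\BEH$ is exactly that $\Theta$, so at the level of the construction you match the cited sources.

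That said, your essential-surjectivity step has a genuine gap. You assert that the image of $\BEH$ is ``an idempotent-complete, additive subcategory of $[\MF(\wQ, q)]$ containing a generator, and hence exhausts it.'' But Dyckerhoff's result is that $E_q$ is a \emph{triangulated} (compact) generator: every object lies in the thick subcategory built from $E_q$ via shifts, cones, and summands. An additive idempotent-complete subcategory containing a triangulated generator need not equal the whole category unless it is also closed under cones. The missing ingredient is the semisimplicity of $\cliff_k(q)\cong H^*(\End_\MF(E_q))$: any map between finite sums of copies of $E_q$ is, after this identification, a matrix over a semisimple ring and hence splits as $\id\oplus 0$ up to isomorphism, so its cone is again a direct sum of shifts of summands of $E_q$; combined with $E_q[1]\cong E_q$ (left multiplication by any $e_i$) this closes the image under cones. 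Without spelling this out, the inference fails. Your full-faithfulness sketch is also thin: invoking Proposition~\ref{trivialmf} and ``Nakayama'' does not actually produce a homotopy killing $\alpha-\alpha_0\otimes\id$. The clean route is to cite the computation $H^*(\End_\MF(E_q))\cong\cliff_k(q)$ from Section~5.5 of \cite{dyckerhoff2011compact} (already used elsewhere in the paper) and deduce full faithfulness for arbitrary $M,N$ by passing to summands of the regular module.
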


Denote by $\Theta$ the explicit construction of this equivalence described in
the proof of Theorem 14.7 in
\cite{yoshino1990maximal}.

\begin{rem}
\label{powerseries}
The inclusion
$$k[x_1, \dots, x_n] \into \widehat{Q}$$
induces an equivalence
$$[\MF(k[x_1, \dots, x_n], q_n)] \xrightarrow{\cong} [\MF(\widehat{Q}, q_n)].$$

To see this, we first recall that every matrix
factorization of $q_n$ over $\wQ$ is isomorphic in
$[\MF(\widehat{Q}, q_n)]$ to one with (linear) polynomial entries
(\cite{yoshino1990maximal} Proposition 14.3); hence, the functor is essentially
surjective. 

Also, one has a commutative diagram
$$
\xymatrix{
{[\MF(Q,  q_n)]} \ar[r]^-{\cong} \ar[d] & \underline{\MCM}(Q/(q_n)) \ar[d] \\
{[\MF(\wQ,  q_n)]}  \ar[r]^-{\cong} & \underline{\MCM}(\wQ/(q_n)) \\
}
$$

The morphism sets in $\underline{\MCM}(Q/(q_n))$
are Artinian modules, and hence complete. Thus, the functor on the right is fully
faithful, and so the functor on the left is as well. 

It now follows from Theorem 4.11 in \cite{dyckerhoff2011compact} that the functor
$$[\MF(k[x_1, \dots, x_n], q_n)] \to [\MF(\widehat{Q}, q_n)]$$
is fully faithful.
\end{rem}

Suppose $q': W \to k$ is another non-degenerate quadratic
form; choose a basis of $W$ with respect to which $q'$ is diagonal,
and let $y_1, \dots, y_m$ denote the corresponding basis of $W^*$. As above, we may think of $q'$ as an element
of $S^2(W^*)$. Set
$Q'$ to be the localization of $S(W^*)$ at the ideal $(y_1, \dots, y_m)$.

It is well-known that the $\Z/2\Z$-graded tensor product of
$\cliff_k(q)$ and $\cliff_k(q')$ over $k$ is canonically isomorphic to
$\cliff_k(q \oplus q')$. Further, by Remark 1.3 in
\cite{yoshino1998tensor}, the $\Z/2\Z$-graded tensor product of
Clifford modules is compatible, via this canonical isomorphism and the
equivalence in Theorem~\ref{BEH}, with the tensor product $\ST_\MF$ in
Proposition~\ref{littleTS}. That is, one has a commutative diagram of
$k$-linear categories

$$
\xymatrix{
\fgmod_{\Z/2\Z}(\cliff_k(q)) \times \fgmod_{\Z/2\Z}(\cliff_k(q')) \ar[r] \ar[d]^-{\Theta \times \Theta} & \fgmod_{\Z/2\Z}(\cliff_k(q \oplus q' )) \ar[d]^-{\Theta} \\
{[\MF(Q, q)] \times [\MF(Q', q')]} \ar[r]^-{\ST_\MF} & {[\MF(Q \otimes_k Q', q \oplus q')]}
}
$$

Let $C$ be a rank 1 free $\Z/2\Z$-graded $\cliff_k(q)$-module. If
$\dim(V) = 1$ and $q = x^2$, it is easy to check that the isomorphism
class of $\Theta(C)$ is
$k^{\stab}$, where $k^{\stab}$ is as defined in
Section~\ref{stabilization}. Further, $E_q \otimes_\MF E_{q'} \cong E_{q \oplus q'}$, where $E_q$, $E_{q'}$, and $E_{q \oplus q'}$ are as in Section~\ref{stabilization} (\cite{dyckerhoff2011compact} Section 6.1). Thus, we have:

\begin{prop}
\label{cliffstab}
If $a_i = 1$ for $1 \le i \le n$, the isomorphism class of
$\Theta(C)$ is $k^{\stab}$.
\end{prop}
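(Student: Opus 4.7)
The plan is to reduce the general case to the one-variable case $n=1$ with $q = x^2$ by exploiting the tensor product compatibility displayed in the commutative square just above the proposition, together with the multiplicativity $E_q \otimes_\MF E_{q'} \cong E_{q \oplus q'}$ of the standard representatives of $k^{\stab}$.

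First I would establish the base case: take $n = 1$, $q = x^2$, so $\cliff_k(q) = k[e]/(e^2 - 1)$ is 2-dimensional with its natural $\Z/2\Z$-grading, and $C = \cliff_k(q)$ is rank one free. Unwinding the construction $\Theta$ from Theorem 14.7 of \cite{yoshino1990maximal}, the action of $e$ on $C$ produces the matrix factorization $\wQ \darrow{x}{x} \wQ$. On the other hand, $E_{x^2}$ constructed in Section~\ref{stabilization} from the regular system of parameters $\{x\}$ and the expression $x^2 = x \cdot x$ is the Koszul-type factorization $\wQ \darrow{x}{x} \wQ$, which by Corollary 2.7 of \cite{dyckerhoff2011compact} represents $k^{\stab}$. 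So the two objects literally coincide, proving the proposition when $n=1$.

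Next I would induct (or simply use associativity of the tensor product). Under the canonical isomorphism
$$\cliff_k(q) \cong \cliff_k(x_1^2) \otimes_k \cdots \otimes_k \cliff_k(x_n^2)$$
of $\Z/2\Z$-graded $k$-algebras, the rank one free $\Z/2\Z$-graded module $C$ corresponds to $C_1 \otimes_k \cdots \otimes_k C_n$, where each $C_i$ is rank one free over $\cliff_k(x_i^2)$. The commutative diagram recalled in the paper from Remark 1.3 of \cite{yoshino1998tensor} then yields
$$\Theta(C) \;\cong\; \Theta(C_1) \otimes_\MF \cdots \otimes_\MF \Theta(C_n)$$
in $[\MF(\widehat{Q}, q)]$. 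By the base case each $\Theta(C_i) \cong E_{x_i^2}$, and by the cited multiplicativity from Section 6.1 of \cite{dyckerhoff2011compact},
$$E_{x_1^2} \otimes_\MF \cdots \otimes_\MF E_{x_n^2} \;\cong\; E_{x_1^2 + \cdots + x_n^2} \;=\; E_q,$$
and $E_q$ represents $k^{\stab}$. Chaining these identifications gives $\Theta(C) \cong E_q$ in $[\MF(\widehat{Q}, q)]$, as required.

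The main obstacle I would worry about is bookkeeping rather than any deep input: one must verify that the canonical isomorphism of Clifford algebras sends a rank one free graded module to the graded tensor product of rank one free graded modules (this is essentially because the Clifford algebra is itself a rank one free graded module over itself), and one must be sure that the tensor-product compatibility diagram, which a priori lives over the unlocalized polynomial ring, can be transferred to $\widehat{Q}$ via the equivalence of Remark~\ref{powerseries} so that the equality $\Theta(C) \cong E_q$ truly takes place where $k^{\stab}$ is defined. Both points are formal given what has already been set up.
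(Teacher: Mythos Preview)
Your proof is correct and follows exactly the same approach as the paper: the paper's argument (contained in the two sentences immediately preceding the proposition) consists of verifying the base case $n=1$, $q=x^2$ directly and then invoking the tensor compatibility of $\Theta$ together with the multiplicativity $E_q \otimes_\MF E_{q'} \cong E_{q \oplus q'}$ from \cite{dyckerhoff2011compact}. You have simply spelled out these steps in more detail.
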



\subsection{Periodicity}
\label{periodicity}
Following \cite{dyckerhoff2011compact}, given a commutative algebra $Q$ over a field $k$ and an element $f$ of $Q$, we define $\MF^{\infty}(Q, f)$ to be the dg category of possibly infinitely-generated matrix factorizations; that is, objects of $\MF^{\infty}(Q,f)$ are defined in the same way as $\MF(Q,f)$, except the projective $\Z/2\Z$-graded $Q$-module $P$ need not be finitely generated. 

A version of Kn\"orrer
periodicity (Theorem~\ref{kpint}) for isolated hypersurface singularities may be deduced from the following proposition:

\begin{prop}
\label{k}
Suppose $Q$ and $Q'$ are commutative algebras over a field $k$. Let
$f \in Q$ and $f' \in Q'$, and suppose $Q/(f)$ and $Q'/(f')$ are $\IHS$. If there exists an object $X$ in
$\MF(Q', f')$ such that
\begin{itemize}
\item[(a)] $X$ is a compact generator of $[\MF^{\infty}(Q', f')]$,
  and 
\item[(b)] the inclusion $k \into \End_{\MF(\widehat{Q'}, f')}(X)$ is a
  quasi-isomorphism
\end{itemize}
then the dg functor
$$K_X: \MF(\wQ, f) \to \MF(\widehat{Q \otimes_k Q'}, f \oplus f')$$
given by 
$$P \mapsto P \otimes_\MF X$$ 
on objects and 
$$\alpha \mapsto \alpha \otimes \id_X$$ 
on morphisms is a quasi-equivalence.
\end{prop}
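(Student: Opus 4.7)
The strategy is to imitate the proof of Proposition~\ref{TS} by factoring $K_X$ through the Morita equivalence $\widehat{\ST}_{\MF}$. Define the auxiliary dg functor
$$L_X \colon \MF(\wQ, f) \to \MF(\wQ, f) \otimes_k \MF(\widehat{Q'}, f')$$
by $P \mapsto (P, X)$ on objects and $\alpha \mapsto \alpha \otimes \id_X$ on morphisms. Then $K_X = \widehat{\ST}_{\MF} \circ L_X$, and since Proposition~\ref{TS} identifies $\widehat{\ST}_{\MF}$ as a Morita equivalence, it suffices to show the same for $L_X$.

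To analyze $L_X$, regard $\End_{\MF(\widehat{Q'}, f')}(X)$ as a dg category $*_X$ with the single object $X$, and consider the chain of canonical dg functors
$$k \xrightarrow{\iota_1} *_X \xrightarrow{\iota_2} \MF(\widehat{Q'}, f').$$
Condition (b) says precisely that $\iota_1$ is a quasi-equivalence. For $\iota_2$, the To\"en--Morita theory from \cite{toen2007homotopy} (see also \cite{bertrand2011lectures} Section 4.4) asserts that a compact generator of a pre-triangulated dg category determines a Morita equivalence from the endomorphism dga of the generator to the ambient dg category. Condition (a) provides such a compact generator $X$ for $[\MF^\infty(Q', f')]$, and the completion arguments underlying the proof of Proposition~\ref{TS} (following \cite{dyckerhoff2011compact} Theorem 5.7) transfer this compact-generator status from the localized setting to the completed one. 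Hence $\iota_2$ is a Morita equivalence.

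Because tensoring with the fixed dg category $\MF(\wQ, f)$ preserves Morita equivalences (\cite{bertrand2011lectures} Section 4.4), the factorization
$$\MF(\wQ, f) \cong \MF(\wQ, f) \otimes_k k \xrightarrow{\id \otimes \iota_1} \MF(\wQ, f) \otimes_k *_X \xrightarrow{\id \otimes \iota_2} \MF(\wQ, f) \otimes_k \MF(\widehat{Q'}, f')$$
exhibits $L_X$ as a composition of Morita equivalences. Composing with $\widehat{\ST}_{\MF}$ then shows $K_X$ is a Morita equivalence, and since the $\IHS$ assumption guarantees that both source and target of $K_X$ are idempotent complete pre-triangulated dg categories, this Morita equivalence upgrades to the desired quasi-equivalence.

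The main obstacle is propagating the compact-generator property of $X$ from $[\MF^\infty(Q', f')]$ (condition (a)) to the completed category $[\MF^\infty(\widehat{Q'}, f')]$, where the Morita dictionary is actually applied. Once this completion step is carried out -- by the same Dyckerhoff-style comparison between localization and completion that drives Proposition~\ref{TS} -- the remainder of the argument is a formal manipulation of tensor products of dg categories. A secondary subtlety is checking that, in the $\IHS$ setting, a Morita equivalence between these particular matrix factorization dg categories really does imply a quasi-equivalence, which reduces to the observation that both sides are idempotent complete pre-triangulated.
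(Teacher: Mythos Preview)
Your argument is correct and follows essentially the same route as the paper: factor $K_X$ as $\widehat{\ST}_{\MF}\circ L_X$, show $L_X$ is a Morita equivalence via the chain $k \hookrightarrow \End_{\MF(\widehat{Q'},f')}(X) \hookrightarrow \MF(\widehat{Q'},f')$ tensored with $\MF(\wQ,f)$, and then upgrade the resulting Morita equivalence to a quasi-equivalence using that both sides are triangulated in the dg sense. The paper cites Theorems 4.11, 5.1, 5.7 and Lemma 5.6 of \cite{dyckerhoff2011compact} (together with \cite{bertrand2011lectures} Theorem 3.2.1 and \cite{hovey2007model} Theorem 1.2.10) for the two places where you are slightly informal---the transfer of compact generation to the completion and the Morita-to-quasi-equivalence step---but your identification of these as the nontrivial points is accurate.
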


\begin{proof}
By Theorems 4.11, 5.1, and 5.7 in \cite{dyckerhoff2011compact}, the inclusion
$$\End_{\MF(\widehat{Q'}, f')}(X) \hookrightarrow \MF(\widehat{Q'}, f')$$
is a Morita equivalence. We have a
chain of Morita equivalences
$$\MF(\wQ, f) \otimes_k k \hookrightarrow \MF(\wQ, f) \otimes_k  \End_{\MF(\widehat{Q'}, f')}(X) \hookrightarrow \MF(\wQ, f) \otimes_k\MF(\widehat{Q'},
f').$$
Composing with $\widehat{\ST}_\MF$, Proposition~\ref{TS} yields a
Morita equivalence 
$$\MF(\wQ, f) \to \MF(\widehat{Q \otimes_k Q'}, f \oplus f').$$

This composition is clearly the functor $K_X$; thus, $K_X$ is a Morita
equivalence. Since both $\MF(\wQ,
f)$ and $\MF(\widehat{Q \otimes_k Q'}, f \oplus f')$ are triangulated in the dg sense by Lemma
5.6 in \cite{dyckerhoff2011compact}, we may apply Theorem 3.2.1 in
\cite{bertrand2011lectures} and Theorem 1.2.10 in
\cite{hovey2007model} to conclude that $K_X$ is a quasi-equivalence.
\end{proof}

To deduce a version of Kn\"orrer periodicity for isolated hypersurface
singularities,
assume $k$ to be an algebraically closed field such that $\chara(k)
\ne 2$, set $Q'=k[u,v]$ and $f'=u^2+v^2$, and take $X$ to be the matrix factorization 
$$k[u,v] \darrow{u+iv}{u-iv} k[u,v].$$ 
This is the approach taken in Section 5.3 of
\cite{dyckerhoff2011compact}.

We point out that $k$ is not assumed to be algebraically closed in
Proposition~\ref{k}, and no assumptions on the characteristic of $k$
are made, either. In particular, we may use Proposition~\ref{k}
to prove an $8$-periodic version of Kn\"orrer periodicity over $\R$
(this result implies Theorem~\ref{thm1} from the introduction):

\begin{thm}
\label{rk}
Suppose $Q$ is an $\R$-algebra. Let $f \in Q$, and suppose $Q/(f)$ is
$\IHS$. Set $Q':=\R[u_1, \dots, u_8]$. Then there exists a
matrix factorization $X$
of $-u_1^2 - \cdots - u_8^2$ over $Q'$ such that the dg functor
$$\MF(\wQ,f) \to
\MF(\widehat{Q \otimes_\R Q'},f - u_1^2 - \cdots - u_8^2)$$
given by 
$$P \mapsto P \otimes_\MF X$$ 
on objects and 
$$\alpha \mapsto \alpha \otimes \id_X$$ 
on morphisms is a quasi-equivalence.
\end{thm}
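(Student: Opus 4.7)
The plan is to apply Proposition~\ref{k} with $Q' := \R[u_1, \dots, u_8]$ and $f' := -u_1^2 - \cdots - u_8^2$; note that $Q'/(f')$ is an $\IHS$ since $f'$ is a non-degenerate quadratic. It therefore suffices to exhibit a matrix factorization $X \in \MF(Q', f')$ satisfying hypotheses (a) and (b) of that proposition, and then to take the resulting Kn\"orrer-type functor $K_X \colon P \mapsto P \otimes_\MF X$ as our equivalence.

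The construction of $X$ proceeds via the Clifford algebra picture of Section~\ref{clifford}. Since $q := -u_1^2 - \cdots - u_8^2$ is diagonal and non-degenerate, Theorem~\ref{BEH} yields an equivalence $\Theta \colon \fgmod_{\Z/2\Z}(\cliff_\R(q)) \xrightarrow{\cong} [\MF(\widehat{Q'}, q)]$. The key input is real Bott periodicity for Clifford algebras: writing $\cliff_\R(q)$ classically as $Cl_{0,8}$, with odd generators $e_1, \dots, e_8$ satisfying $e_i^2 = -1$ and $e_i e_j + e_j e_i = 0$ for $i \neq j$, this $\Z/2\Z$-graded algebra is graded Morita equivalent to $\R$ concentrated in degree zero. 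Equivalently, there is an (essentially unique) simple $\Z/2\Z$-graded $\cliff_\R(q)$-module $C$ whose graded endomorphism algebra satisfies $\End^0_{\cliff_\R(q)}(C) = \R$ and $\End^1_{\cliff_\R(q)}(C) = 0$, and every finitely generated $\Z/2\Z$-graded $\cliff_\R(q)$-module is a finite direct sum of copies of $C$. Set $X := \Theta(C)$. By Remark~\ref{powerseries}, $X$ may be represented by a matrix factorization with polynomial entries, so $X$ can be regarded as an object of $\MF(Q', f')$.

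To verify condition (b), observe that the cohomology of $\End_{\MF(\widehat{Q'}, f')}(X)$ in even and odd degree computes the $\Z/2\Z$-graded Hom set in $[\MF(\widehat{Q'}, f')]$, which by Theorem~\ref{BEH} is identified with the graded algebra $\End^{\Z/2\Z}_{\cliff_\R(q)}(C) = \R$ concentrated in degree zero. Thus the unit $\R \hookrightarrow \End_{\MF(\widehat{Q'}, f')}(X)$ is a quasi-isomorphism. For condition (a), generation of $[\MF(\widehat{Q'}, f')]$ by $X$ is immediate from the fact that $C$ generates $\fgmod_{\Z/2\Z}(\cliff_\R(q))$; combining this with Remark~\ref{powerseries} and the compact-generation machinery of \cite{dyckerhoff2011compact} (in particular Theorems 4.11 and 5.1) yields that $X$ is a compact generator of $[\MF^\infty(Q', f')]$. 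The main substantive ingredient in this argument is real Bott periodicity for Clifford algebras, which is precisely what forces the period to be $8$ rather than the $2$ that suffices over $\C$; indeed, one checks easily that for $1 \le l < 8$, the analogous Clifford algebra $Cl_{0,l}$ is not graded Morita equivalent to $\R$ in degree zero, so no analogue of $X$ exists for fewer variables.
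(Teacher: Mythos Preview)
Your argument is correct and follows the same overall strategy as the paper: apply Proposition~\ref{k} with $Q'=\R[u_1,\dots,u_8]$ and $f'=-u_1^2-\cdots-u_8^2$, and produce the required $X$ via the Buchweitz--Eisenbud--Herzog equivalence together with 8-periodicity of real Clifford algebras.

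The differences lie only in how conditions (a) and (b) are verified. For (b), you invoke that the BEH equivalence identifies the \emph{graded} endomorphism algebra of $X$ with that of $C$ (in particular, that $\Theta$ intertwines parity shift), and hence read off $H^1(\End_\MF(X))=0$ directly from $\End^1_{\cliff_\R(q)}(C)=0$. The paper instead only uses the ungraded statement of Theorem~\ref{BEH} to get $H^0\cong\R$, and then obtains $H^1=0$ by an explicit rank count: since $(X\oplus X[1])^{\oplus 8}\cong E_q$ and $\dim_\R H^*(\End_\MF(E_q))=\dim_\R\cliff_\R(q)=256$, one forces $H^1(\End_\MF(X))=0$. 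Your route is cleaner, but you should be aware that it uses a compatibility (BEH respects the shift) not stated in Theorem~\ref{BEH} as formulated here; this is of course true under Yoshino's explicit construction of $\Theta$. For (a), the paper argues by exhibiting $X$ as a summand of the canonical compact generator $E_q=k^{\stab}$ (via Proposition~\ref{cliffstab}) and then citing Theorems~4.1 and~4.11 of \cite{dyckerhoff2011compact}, whereas you argue that $C$ generates the Clifford module category and transport this through BEH and the Dyckerhoff machinery. Both arguments are valid; the paper's is slightly more self-contained in that it connects directly to the compact generator already established in \cite{dyckerhoff2011compact}.
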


\begin{rem}
One may replace $-u_1^2 - \dots - u_8^2$ with $u_1^2 + \cdots + u_8^2$
and obtain a similar result; the proof is the same.
\end{rem}

\begin{proof}
Set $q:=-u_1^2 - \cdots - u_8^2 \in Q'$. We equip the matrix algebra $\Mat_{16}(\R)$ of $16 \times 16$ of matrices
over $\R$ with a $\Z/2\Z$-grading in the following way:
$A=(a_{ij})$ is
homogeneous of even degree if $a_{ij}=0$ whenever $i +j$ is odd, and
$A$ is homogeneous of odd degree if $a_{ij}=0$ whenever $i +j$ is
even. By Proposition V.4.2 in \cite{lam2005introduction},
$$\cliff_\R(q) \cong \Mat_{16}(\R)$$
as $\Z/2\Z$-graded algebras. In particular, by Theorem~\ref{BEH},
$$[\MF(\widehat{Q'}, q)] \cong \fgmod_{\Z/2\Z}(\Mat_{16}(\R)),$$
where the right hand side is the category of finitely generated $\Z/2\Z$-graded left $\Mat_{16}(\R)$-modules. Let $M \in \fgmod_{\Z/2\Z}(\Mat_{16}(\R))$ be the module consisting of elements of $\Mat_{16}(\R)$ with nonzero entries only in the first column. Recall that, by Remark~\ref{powerseries}, the canonical map
$$[\MF(Q', q)] \to [\MF(\widehat{Q'}, q)]$$
is an equivalence; let $X$ be an object of $[\MF(Q', q)] $ corresponding to $M$. 

Let $\m:=(u_1, \dots, u_8) \subseteq Q'$, and let $E_q \in \MF(Q'_{\m}, q)$ be as in Section~\ref{stabilization}. Notice that, by Proposition~\ref{cliffstab},
$(X \oplus X[1])^{\oplus 8} \cong E_q$
in $[\MF(Q'_{\m}, q)]$. In particular, it follows
from Theorems 4.1 and 4.11 of \cite{dyckerhoff2011compact} that $X$ is a compact generator of
$[\MF^{\infty}(Q',
q)]$.

Since $\End_{\Mat_{16}(\R)}(M) \cong \R$ as $\Z/2\Z$-graded $\R$-algebras,
where $\R$ is concentrated in even degree, we have
$H^0(\End_\MF(X)) \cong \R$. We now show $H^1(\End_\MF(X)) = 0$.  By Section 5.5 of \cite{dyckerhoff2011compact}, $H^0(\End_\MF(E_q)) \oplus H^1(\End_\MF(E_q)) $ is isomorphic, as a $\Z/2\Z$-graded $\R$-vector space, to $\cliff_\R(q)$, and so $H^1(\End_\MF(E_q))$ has rank $128$. Also, we have isomorphisms
$$H^1(\End_\MF(E_q)) \cong H^1(\End_\MF((X \oplus X[1])^{\oplus 8})) \cong H^0(\End_\MF(X))^{128} \oplus H^1(\End_\MF(X))^{128}.$$
Thus, $H^1(\End_\MF(X)) = 0$, and so the inclusion
$$\R \into \End_\MF(X)$$
is a quasi-isomorphism. Now apply Proposition~\ref{k}.
\end{proof}

\begin{rem}
Theorem~\ref{rk} implies the existence of a Kn\"orrer-type periodicity for matrix
factorizations over $\R$ of period at most $8$. We point out that the
period is exactly $8$, since the Brauer-Wall group of $\R$ is the
cyclic group $\Z/8\Z$ generated by the class of $\cliff_\R(x^2)$.
\end{rem}


\section{Matrix factorizations and the $K$-theory of the Milnor fiber}
\label{ch2}
We have demonstrated that matrix
factorization categories exhibit 2- and 8-periodic versions of Kn\"orrer
periodicity over $\C$ and $\R$, respectively. This pattern resembles Bott periodicity in
topological $K$-theory; the goal of this section is to explain this
resemblance. 

We give a rough sketch of our approach. The classical link between the periodicity of Clifford algebras up to
$\Z/2\Z$-graded Morita equivalence and Bott periodicity in topological
$K$-theory is the \emph{Atiyah-Bott-Shapiro construction}, which first
appeared in Part III of \cite{atiyah1964clifford} (and, in fact, a \emph{proof} of Bott periodicity using Clifford algebras is provided by Wood in \cite{wood1966banach}). Loosely speaking,
the Atiyah-Bott-Shapiro construction is a way of mapping a finitely generated $\Z/2\Z$-graded module over a real or complex Clifford
algebra to a class in the $K$-theory of a sphere.

Composing the Buchweitz-Eisenbud-Herzog equivalence
(Theorem~\ref{BEH}) with the Atiyah-Bott-Shapiro construction, we have a
way of assigning a class in the topological $K$-theory of a sphere to
a matrix factorization of a non-degenerate quadratic form over $\R$ or
$\C$:
$$\begin{tikzpicture}[scale=1.5]
\node (A) at (0,1) {$\text{mf's of real/complex quadratics}$};
\node (B) at (5,1) {$\text{$K$-theory of spheres}$};
\path[->, font=\scriptsize,>=angle 90]
(A) edge node[above]{$\ABS \circ \BEH$} (B);
\end{tikzpicture}$$

The idea is to lift this composition; that is, we wish to associate a
space $X_f$ to a real or complex polynomial $f$ and construct a map
from matrix factorizations of $f$ to the topological $K$-theory of $X_f$
so that the diagram

$$\begin{tikzpicture}[scale=1.5]
\node (A) at (0,1) {$\text{mf's of real/complex quadratics}$};
\node (B) at (5,1) {$\text{$K$-theory of spheres}$};
\node (C) at (0,0) {$\text{mf's of real/complex polynomials}$};
\node (D) at (5,0) {$\text{$K$-theory of spaces of the form}$ $X_f$};
\path[->, font=\scriptsize,>=angle 90]
(A) edge node[above]{$\ABS \circ \BEH$} (B);
\path[->, dotted, font=\scriptsize,>=angle 90]
(C) edge node[above]{} (D);
\path[->, right hook-latex, font=\scriptsize,>=angle 90]
(A) edge node[right]{} (C)
(B) edge node[right]{} (D);
\end{tikzpicture}$$
commutes.

It turns out that the right choice of $X_f$ is the \emph{Milnor fiber}
(\emph{positive or negative Milnor fiber}) associated to
the complex (real) polynomial. 

We begin this section with discussions of known results concerning the
Milnor fiber and relative topological $K$-theory. Then, using the work of Atiyah-Bott-Shapiro in \cite{atiyah1964clifford} as
a guide, we will complete the above diagram, and we will use the bottom arrow to
explain a precise sense in which Kn\"orrer periodicity and Bott
periodicity are compatible phenomena.


\subsection{The real and complex Milnor fibers}
\label{milnorfiber}

Let $f \in \C[x_1, \dots, x_n]$, and suppose $f(0) = 0$. We begin this
section by
describing the
construction of the Milnor fiber associated to $f$, following the exposition in
Section 1 of \cite{buchweitz2012index}. We then discuss various
properties of the Milnor fiber that we will make use of later on.

\subsubsection{Construction of the Milnor fibration and some properties of
the Milnor fiber}

For $\epsilon > 0$, define $B_\epsilon$ to be the closed ball centered
at the origin of radius
$\epsilon$ in $\C^{n}$, and for $\delta > 0$, set $D_\delta^*$ to be
the open punctured disk centered at the origin in $\C$ of radius $\delta$.

Choose $\epsilon>0$ so that, for $0 < \epsilon' \le \epsilon$, 
$\partial B_{\epsilon'}$ intersects $f^{-1}(0)$ transversely. Upon choosing
such a number $\epsilon$, choose $\delta \in (0, \epsilon)$ such that $f^{-1}(t)$ intersects
$\partial B_\epsilon$
  transversely for all $t \in D_\delta^*$. Then the map
$$\psi: B_\epsilon \cap f^{-1}(D_\delta^*) \to D_\delta^*$$ 
given by $\psi(x)=f(x)$ is a locally trivial fibration. 

The map $\psi$ depends, of course, on our choices of $\epsilon$ and
$\delta$. However, if $\epsilon',
\delta'$ is another pair of positive numbers satisfying the above conditions, the fibration associated
to these choices is fiber homotopy equivalent to the one above (see Definition 1.5 in Chapter 3,
\S1 of \cite{dimca1992singularities} for the definition of a fiber homotopy equivalence). We are thus
justified in calling $\psi$ the \emph{Milnor fibration} associated to $f$.

\begin{rem}
The Milnor fibration was originally introduced in \cite{milnor1968singular}. The above construction is not the same as the construction of the
Milnor fibration in \cite{milnor1968singular} and is due to L\^e (\cite{le1976some}). The two
constructions yield fiber homotopy equivalent fibrations (\cite{dimca1992singularities} Chapter 3, \S 1).

\end{rem}

Choose $t \in D_\delta^*$. The fiber of $\psi$ over $t$ is called the \emph{Milnor fiber of $f$ over $t$}; we will denote
it by $F_f$. $F_f$ is independent of our choices of $\epsilon$, $\delta$, and $t$ up to homotopy equivalence, so we suppress these choices in the
notation, and we will often refer to $F_f$ as just the Milnor fiber of $f$. However, these choices will be significant at several
points later on.

If $\C[x_1, \dots, x_n]_{(x_1, \dots,
x_n)}/(f)$ is $\IHS$ (see Definition~\ref{IHS}), set
$$\mu:=\dim_\C\frac{\C[[x_1, \dots, x_n]]}{(\frac{\partial f}{\partial x_1},
\dots, \frac{\partial f}{\partial x_n})} < \infty.$$

\begin{thm}[\cite{milnor1968singular} Theorem 6.5]
\label{bouquet}
If $\C[x_1, \dots, x_n]_{(x_1, \dots,
x_n)}/(f)$ is $\IHS$, $F_f$ is homotopy equivalent to a wedge sum of $\mu$ copies of $S^{n-1}$.
\end{thm}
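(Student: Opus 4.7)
The plan is to follow Milnor's original argument, combining the Stein structure of the Milnor fiber with a Morsification of $f$. Since $F_f$ is a smooth affine complex hypersurface in $\C^n$, it is a Stein manifold of complex dimension $n-1$; by the Andreotti-Frankel theorem (equivalently, by Morse theory applied to $-\operatorname{Re}(\ell)|_{F_f}$ for a generic linear functional $\ell$ on $\C^n$, every critical point of which has index at most $n-1$), $F_f$ has the homotopy type of a CW complex of real dimension at most $n-1$. This provides the upper bound on homotopy dimension.

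To pin down the homotopy type, I would perturb $f$ to a generic Morsification $\widetilde f = f + \sum_i a_i x_i$ with $|a_i|$ small. Since $f$ has an isolated singularity, $\widetilde f$ is a Morse function with exactly $\mu$ nondegenerate critical points inside $B_\epsilon$; this count follows from the algebraic description $\mu = \dim_\C \C\{x\}/J(f)$ combined with upper semicontinuity of the Milnor number. By stability of the Milnor fibration under small perturbation, a regular fiber of $\widetilde f|_{B_\epsilon}$ over a value near $\partial D_\delta$ is homotopy equivalent to $F_f$. Now apply Picard-Lefschetz theory: the total space $B_\epsilon \cap \widetilde f^{-1}(D_\delta)$ is contractible, and a distinguished system of non-intersecting arcs in $D_\delta$ joining each critical value to a base regular value produces $\mu$ Lefschetz thimbles, i.e.\ topological $n$-disks in the total space attached to $F_f$ along embedded vanishing $(n-1)$-spheres. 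A Mayer-Vietoris argument using the contractibility of the total space identifies $H_{n-1}(F_f;\Z)$ with the free abelian group on these $\mu$ vanishing cycles, with all other reduced integral homology vanishing. Simple-connectedness of $F_f$ for $n\ge 3$ (via the long exact sequence of the fibration $F_f \to S_\epsilon^{2n-1} \setminus K \to S^1$, where $K$ is the link of the singularity) combined with Hurewicz then yields $(n-2)$-connectedness, and a $(n-2)$-connected CW complex of dimension at most $n-1$ with free middle homology of rank $\mu$ is necessarily homotopy equivalent to $\bigvee^\mu S^{n-1}$.

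The main obstacle is the Picard-Lefschetz analysis: one must verify both that the vanishing cycles are honest $(n-1)$-spheres and that they form a basis of $H_{n-1}(F_f)$. The first assertion follows from the local holomorphic normal form $\widetilde f = \sum_j z_j^2$ at each nondegenerate critical point, where the smooth fiber $\{\sum_j z_j^2 = t\}$ is diffeomorphic to $TS^{n-1}$ and deformation retracts onto its zero section. The second is the technical core of Milnor's original proof and requires a careful global Morse-theoretic analysis that glues the local vanishing-cycle data into a handle decomposition of $F_f$, crucially using the transversality conditions built into the choices of $\epsilon$ and $\delta$ in the setup of the Milnor fibration.
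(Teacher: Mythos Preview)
The paper does not prove this theorem; it is simply quoted from Milnor's book \cite{milnor1968singular} and used as a black box. There is therefore nothing in the paper to compare your proposal against.

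That said, your sketch is a valid route to the result, though it is not quite Milnor's original argument. Milnor does not Morsify $f$; instead he works directly with the fibration $\phi\colon S_\epsilon\setminus K\to S^1$, shows via Morse theory on $|f|^2$ (or rather on a carefully chosen real function) that the fiber has the homotopy type of a CW complex of dimension $\le n-1$, proves $(n-2)$-connectedness of the fiber from the long exact sequence of the fibration and the high connectivity of $S_\epsilon\setminus K$, and then invokes Hurewicz to conclude. The identification of the number of spheres with $\dim_\C \C\{x\}/J(f)$ is done separately via the Poincar\'e--Hopf index formula applied to $\nabla f$. Your approach via Morsification and Picard--Lefschetz thimbles is the more modern one (closer to Brieskorn or Arnold--Gusein-Zade--Varchenko) and has the advantage of simultaneously identifying a geometric basis of $H_{n-1}(F_f)$ by vanishing cycles, at the cost of needing the stability of the Milnor fibration under perturbation, which itself requires some care. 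One small gap: your Hurewicz argument only covers $n\ge 3$; the cases $n=1$ and $n=2$ (where $F_f$ is a finite set of points, respectively a wedge of circles) need to be handled separately, since the fiber is not simply connected there.
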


\begin{rem} Since $\psi$ restricts to a fibration over a circle, $F_f$ comes equipped with a monodromy homeomorphism
$$h: F_f \xrightarrow{\cong} F_f.$$
\end{rem}

\subsubsection{The Sebastiani-Thom homotopy equivalence}
\label{TStopp}
We recall the definition of the join of two topological spaces:

\begin{defn}
Let $X$ and $Y$ be compact Hausdorff spaces. The \emph{join}
of $X$ and $Y$, denoted $X * Y$, is the quotient of $X \times Y \times I$ by the
relations
$$(x_1, y, 0) \sim (x_2, y, 0)$$
$$(x, y_1, 1) \sim (x, y_2, 1)$$
equipped with the quotient topology.
\end{defn}

\begin{rem}
\label{joincone}
We express the cone $CX$ over a
compact Hausdorff space $X$ explicitly as the quotient of
$$X\times [0,1]$$
by the relation $$(x_1, 0) \sim (x_2, 0)$$ for all $x_1, x_2 \in X$. When $X$ and $Y$ are
compact Hausdorff, 
$X * Y$ is homeomorphic to $(CX \times Y) \cup (X
\times CY) \subseteq CX \times CY;$
here, we identify $X$ and $Y$ with the subsets $X \times \{1\}$ and
$Y \times \{1\} $ of $CX$ and $CY$, respectively. By \cite{brown2006topology} 5.7.4, an explicit homeomorphism 
$$CX \times CY \xrightarrow{\cong} C(X*Y)$$
is given by
$$(x, t, y, t') \mapsto ((x, y, \frac{t}{2t'}), t') \text{, if } t' \ge
t \text{, } t' \ne 0$$
$$(x,t,y,t') \mapsto ((x, y, 1-\frac{t'}{2t}), t) \text{, if } t \ge 
t' \text{, } t \ne 0$$
$$(x,0,y,0) \mapsto ((x,y,0),0),$$
and this map restricts to a homeomorphism
$$w:(CX \times Y) \cup (X
\times CY) \xrightarrow{\cong} X * Y .$$
\end{rem}

Now, suppose $f \in \C[x_1, \dots, x_n]$, $f'
\in \C[y_1, \dots, y_m]$, and $f(0) = 0 = f'(0)$. Assume \newline
$R:=\C[x_1, \dots, x_n]_{(x_1, \dots, x_n)}/(f)$ and $R':=\C[y_1, \dots,
y_m]_{(y_1, \dots, y_m)}/(f')$ are
$\IHS$ (see Definition~\ref{IHS}). Let $f \oplus f'$ denote the sum of $f$ and $f'$ thought of as
an element of $\C[x_1, \dots, x_n, y_1, \dots, y_m]$. The
following theorem of Sebastiani-Thom relates the Milnor fibers of
$f$, $f'$, and $f \oplus f'$:

\begin{thm}[\cite{sebastiani1971resultat}]
\label{TStop}
There is a homotopy equivalence
$$\ST: F_f * F_{f'} \to F_{f \oplus f'}$$
that is compatible with monodromy; that is, the square 
\[
\begin{CD}
F_f * F_{f'} @>{\ST}>> F_{f \oplus f'}\\
@V{h * h}VV  @V{h}VV \\
F_f * F_{f'}  @>{\ST}>> F_{f \oplus f'}\\
\end{CD}
\]
commutes up to homotopy.
\end{thm}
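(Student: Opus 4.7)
The plan is to construct $\ST$ by a direct geometric model: identify a ``real slice'' inside $F_{f \oplus f'}$ that is manifestly a join, and show the inclusion is a homotopy equivalence. First I would adjust parameters so that a common small positive real number $t$ serves as a base point for all three Milnor fibrations; this is legitimate since the Milnor fiber depends only on the base point up to homotopy equivalence.

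Next, introduce
$$G := \bigl\{(x,y) \in B_\epsilon \times B_{\epsilon'} : f(x) \in [0,t],\ f'(y) \in [0,t],\ f(x) + f'(y) = t\bigr\},$$
which, for a suitable common choice of ball radii, sits inside $F_{f \oplus f'}$. Parametrize by $s := f(x)/t \in [0,1]$: then $G$ fibers over $[0,1]$ with fiber $\bigl(f^{-1}(st) \cap B_\epsilon\bigr) \times \bigl((f')^{-1}((1-s)t) \cap B_{\epsilon'}\bigr)$. For $s \in (0,1)$ both factors are Milnor fibers (homotopy equivalent to $F_f$ and $F_{f'}$); at $s=0$ the first factor degenerates to $f^{-1}(0) \cap B_\epsilon$, which is contractible (a cone on the link), and symmetrically at $s=1$. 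Trivializing the regular fibers over $(0,1)$ via parallel transport upgrades this picture to an explicit homeomorphism $G \cong F_f * F_{f'}$, with the contractible degenerate fibers collapsing to the two cone points of the join.

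The essential analytic step is to show the inclusion $G \hookrightarrow F_{f \oplus f'}$ is a homotopy equivalence. My plan is to deformation retract $F_{f \oplus f'}$ onto $G$ by ``straightening'' the complex values $(f(x), f'(y)) \in \{u+v=t\} \subseteq \C^2$ onto the real segment joining $(t,0)$ and $(0,t)$: choose a smooth isotopy of the affine complex line $\{u+v=t\}$ onto this segment, avoiding the discriminant of $(f, f') \colon B_\epsilon \times B_{\epsilon'} \to \C^2$, and lift it via parallel transport in the smooth locus. The main obstacle is ensuring the lifted flow remains inside $B_{\epsilon''}$: this requires shrinking $\delta$ so that the relevant triangle of values lies well within the region where Milnor--L\^e transversality at $\partial B_\epsilon$ and $\partial B_{\epsilon'}$ keeps fibers in the interior.

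Monodromy compatibility is then essentially free. The monodromy $h$ on $F_{f \oplus f'}$ is induced by the loop $t \mapsto e^{2\pi i \theta} t$, and in our model this loop lifts to the simultaneous rotation of both $f(x)$ and $f'(y)$ in $\C$. Parallel transport along each individual rotation realizes the monodromies of $F_f$ and $F_{f'}$ separately, so under the identification $F_{f \oplus f'} \simeq F_f * F_{f'}$ the induced map is precisely $h * h$, with the join relations handling the endpoint degeneracies automatically.
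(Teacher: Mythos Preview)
The paper does not prove this result itself; it is cited from Sebastiani--Thom, and the paper merely exhibits an explicit realization of $\ST$ following \cite{arnold2012singularities}. That construction runs in the direction dual to yours: one chooses cone maps $H\colon CF_f \to B_\epsilon$ and $H'\colon CF_{f'} \to B_{\epsilon'}$ with $f(H(x,s)) = st''$, and then defines $g\colon (CF_f \times F_{f'}) \cup (F_f \times CF_{f'}) \to F_{f\oplus f'}$ by $(x,s,y,s') \mapsto \bigl(H(x,\tfrac{1+s-s'}{2}),\, H'(y,\tfrac{1-s+s'}{2})\bigr)$. Your subspace $G$ is exactly the natural target of such a map, so at the level of strategy the two pictures coincide.

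There is, however, a genuine error in your identification of $G$ with the join. You claim an ``explicit homeomorphism $G \cong F_f * F_{f'}$, with the contractible degenerate fibers collapsing to the two cone points of the join.'' But $F_f * F_{f'}$ has no cone points: its slice at $s=0$ is all of $F_{f'}$ (only the $F_f$-coordinate is collapsed), and symmetrically at $s=1$. Meanwhile your fiber of $G$ at $s=0$ is $\bigl(f^{-1}(0)\cap B_\epsilon\bigr) \times F_{f'}$, whose first factor is the cone on the link --- contractible, but of real dimension $2(n-1)$, not a point. Collapsing that factor is only a homotopy equivalence, never a homeomorphism. The salvage is that $G|_{[0,1/2]}$ is an $F_{f'}$-bundle over the contractible space $B_\epsilon \cap f^{-1}([0,t/2])$, hence homotopy equivalent to $CF_f \times F_{f'}$, and symmetrically on $[1/2,1]$; gluing over $F_f \times F_{f'}$ recovers the join up to homotopy only. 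A smaller imprecision in your retraction step: the discriminant of $(f,f')$ is the union of the coordinate axes in $\C^2$, which contains the endpoints $(t,0)$ and $(0,t)$ of your target segment, so it cannot literally be ``avoided''; the isotopy must instead fix those two endpoints and pass through regular values elsewhere.
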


\begin{rem}
By results of Oka in \cite{oka1973homotopy}, the assumption in Theorem~\ref{TStop} that
$R$ and $R'$ are $\IHS$ is not necessary if $f$ and
$f'$ are quasi-homogeneous.
\end{rem}

We refer the reader to Section 2.7 of \cite{arnold2012singularities} and $\S$3 of Chapter
3 in \cite{dimca1992singularities} for discussions related to
Theorem~\ref{TStop}. We now exhibit an explicit
map realizing the homotopy equivalence in Theorem~\ref{TStop}, following Section 2.7 of \cite{arnold2012singularities}.

Choose real numbers $\epsilon'', \delta'',$ such that the map
$$B_{\epsilon''} \cap (f \oplus f')^{-1}(D_{\delta''}^*) \to D_{\delta''}^*$$
given by $x \mapsto (f \oplus f')(x)$ is a locally trivial fibration, as above.

Similarly, choose $\epsilon, \delta$ and $\epsilon', \delta'$, as well
as $t'' \in D_{\delta''}^*$,
so that the analogous maps
$$B_{\epsilon} \cap f^{-1}(D_{\delta}^*) \to D_{\delta}^*$$
$$B_{\epsilon'} \cap (f')^{-1}(D_{\delta'}^*) \to D_{\delta'}^*$$
are locally trivial fibrations, and also so that
\begin{itemize}
\item[(a)] $\epsilon, \epsilon'$ are sufficiently small so that
$B_\epsilon \times B_{\epsilon'} \subseteq B_{\epsilon''}$.
\item[(b)] $|t''| < \min\{\delta, \delta'\}$.
\end{itemize}

Set $F_f$, $F_{f'}$, and $F_{f \oplus f'}$ to be the Milnor fibers of $f$,
$f'$, and $f \oplus f'$ over $t''$. Applying Lemma 2.10 in
\cite{arnold2012singularities}, choose a continuous map
$$H:  CF_f \to B_\epsilon$$ 
such that

\begin{itemize}
\item $H(x, 1) = x \in F_f \subseteq B_\epsilon$,
\item $H(-, s): F_f \to B_\epsilon$ maps into the Milnor fiber $B_\epsilon
  \cap f^{-1}(st'')$ for $s \in (0, 1)$, and 
\item $H(x, 0) = 0$ for all $x \in F_f$
\end{itemize}

\begin{exm}
\label{qhnotihs}
If $f$ is quasi-homogeneous of degree $d$ with weights $w_1, \dots,
w_d$, such a map $H$ may be given by
$$(x, s) \mapsto ({s}^{\frac{w_1}{d}}x_1, \dots,
{s}^{\frac{w_n}{d}}x_n).$$
Notice our assumption that $R$ is $\IHS$ is not needed here.
\end{exm}

Choose $H'$ similarly for the Milnor fiber $F_{f'}$. By the discussion on pages 54-55 of
\cite{arnold2012singularities} and Remark~\ref{joincone},
there is a homotopy equivalence
$$g: CF_f
\times F_{f'} \cup F_f \times CF_{f'} \to F_{f \oplus
  f'}$$
given by 
$$(x,s,y,s') \mapsto (H(x, \frac{1+s - s'}{2}), H'(y,
\frac{1-s+s'}{2})).$$

Composing, one has a homotopy equivalence 
$$ g \circ w^{-1}: F_f * F_{f'} \to F_{f \oplus f'},$$
where $w$ is the homeomorphism in Remark~\ref{joincone}. The homotopy
equivalence $g \circ w^{-1}$ enjoys the same properties as the map $\ST$
in Theorem~\ref{TS}.

\begin{rem} 
\label{TScone} 
$g$ extends to a homotopy equivalence
of pairs
$$G: ( CF_f \times CF_{f'}, CF_f
\times F_{f'} \cup F_f \times CF_{f'}) \to (B_{\epsilon''}, F_{f \oplus
  f'})$$
that maps a point $(x,s,y,s')$ to 
$$(H(x, \frac{s}{2}), H'(y, \frac{2s'-s}{2}) \text{, if } s \le s'
\text{, } s' \ne 0$$
$$(H(x, \frac{2s-s'}{2}),H'(y, \frac{s'}{2}) \text{, if } s' \le s
\text{, } s \ne 0$$
$$0 \text{, if } s=0=s'.$$

\end{rem}

\begin{rem}
\label{qhts}
When $f$ and $f'$ are quasi-homogeneous (and $R, R'$ are not necessarily $\IHS$), we may use
Example~\ref{qhnotihs} to build a homotopy equivalence $g: CF_f
\times F_{f'} \cup F_f \times CF_{f'} \to F_{f \oplus
  f'}$ in the same way as above (\cite{dimca1992singularities} Chapter
3, Remark 3.19').
\end{rem}


\subsubsection{An analogue of the Milnor fibration for polynomials over $\R$}
\label{realmilnorfiber}
Now, suppose $f \in \R[x_1, \dots, x_n]$ and $f(0) = 0$. One may construct a locally
trivial fibration
$$\psi: B_\epsilon \cap f^{-1}((-\delta, 0) \cup
(0, \delta)) \to
(-\delta, 0) \cup
(0, \delta)$$
for some $\epsilon > 0$ and $\delta$ such that $0 < \delta << \epsilon$ in the same way as above, where
$B_\epsilon$ is now the closed ball of radius
$\epsilon$ centered at the origin in $\R^{n}$. 

But now, fibers over $(-\delta, 0)$ and $(0, \delta)$ need not be
homotopy equivalent. For instance, if $f = x_1^2 + \dots + x_n^2$, the positive
fibers of $\psi$ are homeomorphic to $S^{n-1}$, while the
negative fibers are empty.

Choose $t \in (0, \delta)$ and $t' \in (-\delta, 0)$. The fiber of $\psi$ over $t$ is called the \emph{positive Milnor fiber of $f$ over $t$}, denoted by $F_f^+$, and the fiber of $\psi$ over $t'$ is called the \emph{negative Milnor fiber of $f$ over $t'$}, denoted $F_f^-$. As in the complex case, $F_f^+$ and $F_f^-$ are independent of our choices of $\epsilon$, $\delta$, $t$, and $t'$ up to homotopy equivalence, so we suppress these choices in our
notation, and we will often refer to $F_f^+$ and $F_f^-$ as just the positive and negative Milnor fibers of $f$.

The topology of the real Milnor fibers is more
complicated than that of the complex Milnor fiber. However, there is a version of Theorem~\ref{TStop}
for real Milnor fibers of quasi-homogeneous polynomials. Suppose 
$f \in \R[x_1, \dots, x_n] \text{, } f' \in \R[y_1, \dots, y_m]$
are quasi-homogeneous and nonconstant. If $F^+_f$ and $F^+_{f'}$ are nonempty, there is a homotopy equivalence
$$F^+_f * F^+_{f'} \to F^+_{f \oplus f'}$$
(\cite{dimca1992real} Remark 11).
Moreover, the homotopy equivalence may be constructed as in
Remark~\ref{qhts}; that is, one has a homotopy equivalence
of pairs
$$G: (CF^+_f \times C F^+_{f'},CF^+_f
\times F^+_{f'} \cup F^+_f \times CF^+_{f'}) \to (B_{\epsilon''},F^+_{f \oplus f'}).$$
Since $F_f^-=F_{-f}^+$, one has a similar result for negative Milnor
fibers.


\subsection{Relative topological $K$-theory}
\label{$K$-theory}
We introduce some facts concerning relative topological
$K$-theory. All of the results in this section are essentially due to
Atiyah-Bott-Shapiro in \cite{atiyah1964clifford}, but we modify their
exposition at several points to suit our purposes. 

Let $X$ be a compact topological space, and let $Y$ be a closed
subspace of $X$ such that there exists a homotopy
equivalence of pairs between $(X,Y)$ and a finite CW pair; we construct a category $\fC_1(X,Y)$ from $(X,Y)$ in the following way:

$\bullet$ An object of $\fC_1(X,Y)$ is a pair of real vector bundles $V_1$,
$V_0$ over $X$ equipped with an isomorphism
$$V_1|_Y  \xrightarrow{\sigma} V_0|_Y.$$
Denote objects of $\fC_1(X,Y)$ by $(V_1, V_0; \sigma)$.

$\bullet$ Morphisms in $\fC_1(X,Y)$ are pairs of morphisms of vector
bundles over X
$$\alpha_1: V_1 \to V_1' \text{, } \alpha_0: V_0 \to V_0'$$
such that the following diagram of maps of vector bundles over $Y$ commutes:
\[
\begin{CD}
V_1|_Y @>{\sigma}>> V_0|_Y\\
@V{\alpha_1|_Y}VV  @V{\alpha_0|_Y}VV \\
V_1'|_Y @>{\sigma'}>>V_0'|_Y\\
\end{CD}
\]

We write morphisms in $\fC_1(X,Y)$ as ordered pairs $(\alpha_1, \alpha_0)$.
\begin{rem}
The reason for the subscript in the notation $\fC_1(X,Y)$ is that, for
any $n \ge 1$, one
may similarly build a category $\fC_n(X,Y)$ with objects given by ordered $(n+1)$-tuples of
vector spaces on $X$ whose restrictions to $Y$ fit into an exact
sequence (cf. \cite{atiyah1964clifford} \S7).
\end{rem}

\begin{rem}
We will work with real vector bundles throughout this section;
however, there is an analogous version of every result in this section
for complex vector bundles.
\end{rem}

The following facts about $\fC_1(X,Y)$ are easily verified:
\begin{itemize}

\item If $(V_1, V_0; \sigma)$ and $(V_1', V_0'; \sigma')$ are objects
in $\fC_1(X,Y)$, $(V_1 \oplus V_1', V_0 \oplus V_0', \sigma \oplus \sigma')$ is
their coproduct. 

\item $\fC_1(X,Y)$ is an additive category.

\item A map $g: (X_1, Y_1) \to (X_2, Y_2)$ of pairs of spaces as above induces a functor
$$g^*: \fC_1(X_2, Y_2) \to \fC_1(X_1, Y_1)$$
via pullback. 

\item A morphism $(\alpha_1, \alpha_0)$ in $\fC_1(X,Y)$ 
is an isomorphism (resp. monomorphism,
epimorphism) if and only if $\alpha_1$ and $\alpha_0$ are isomorphisms
(resp. monomorphisms, epimorphisms) of vector bundles over $X$.

\end{itemize}

We shall call an object of $\fC_1(X,Y)$ \emph{elementary} if it is
isomorphic to an object of the form $(V, V; \id_{V|_Y})$. It is easy
to check that $(V_1, V_0; \sigma)$ is elementary if and only if $\sigma$ can be extended to an isomorphism $\widetilde{\sigma}: V_1
\to V_0$.

If $V$ and $V'$ are objects in $\fC_1(X,Y)$, we will say $V \sim V'$ if and
only if there exist elementary objects $E, E'$ such that 
$$V \oplus E \cong V' \oplus E'.$$ 

The relation $\sim$ is an equivalence relation. Let $L_1(X,Y)$ denote
the commutative monoid of equivalence classes under $\sim$ with
operation $\oplus$. We shall denote by $[V_1, V_0; \sigma]$ the class in
$L_1(X,Y)$ represented by $(V_1, V_0, \sigma)$.

\begin{rem}
\label{L1pullback}
Let $ (X_1, Y_1)$, $(X_2, Y_2)$ be pairs of spaces as above, and let $g: (X_1,
Y_1) \to (X_2, Y_2)$ be a map of pairs. Then the functor
$$g^*: \fC_1(X_2, Y_2) \to \fC_1(X_1, Y_1)$$
applied to an elementary object is again elementary. Hence, $g^*$
induces a map of monoids
$$L_1(X_2, Y_2) \to L_1(X_1, Y_1).$$
\end{rem}

The main reason we are interested in the monoid $L_1(X,Y)$ is the
following result:

\begin{prop}[Atiyah-Bott-Shapiro, \cite{atiyah1964clifford}]
\label{bigtheoremABS}
There exists a unique natural homomorphism
$$\chi: L_1(X,Y) \to KO^0(X,Y)$$
which, when $Y = \emptyset$, is given by
$$\chi(E) = [V_0] - [V_1].$$

Moreover, $\chi$ is an isomorphism.
\end{prop}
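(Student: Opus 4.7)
The plan is to follow the original Atiyah-Bott-Shapiro approach from Part I of \cite{atiyah1964clifford}, adapted to the real pair-formulation used here. I would construct $\chi$ concretely as follows. Given $(V_1, V_0; \sigma)$ in $\fC_1(X, Y)$, extend $\sigma$ to a bundle map $\widetilde\sigma: V_1 \to V_0$ defined on all of $X$; such an extension exists, possibly after stabilizing, since $(X, Y)$ has the homotopy type of a finite CW pair and bundle maps can be extended from a closed subpolyhedron. The length-one complex $V_1 \xrightarrow{\widetilde\sigma} V_0$ is acyclic on $Y$ by construction, so it represents a class in $KO^0(X, Y)$ under the standard complex-of-bundles model of relative $K$-theory. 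Set $\chi[V_1, V_0; \sigma]$ equal to this class. Any two such extensions are connected by a straight-line homotopy of acyclic-on-$Y$ complexes, so the class is independent of the choice.

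Second, I would check that $\chi$ descends to $L_1(X,Y)$ and is a natural monoid homomorphism: isomorphisms in $\fC_1(X, Y)$ are respected by construction; elementary objects map to zero because there $\widetilde\sigma$ may be chosen to be a global isomorphism, making the complex globally acyclic; additivity under $\oplus$ is direct; and naturality in the pair $(X, Y)$ is immediate. When $Y = \emptyset$, the Euler-characteristic formula $\chi[V_1, V_0; \sigma] = [V_0] - [V_1]$ drops out at once. Uniqueness of $\chi$ subject to this normalization is standard: up to stabilization by elementary objects $L_1$ has a universal description governed only by the homotopy type of $(X, Y)$, and any two natural transformations agreeing with $V \mapsto [V_0] - [V_1]$ when $Y=\emptyset$ must coincide on this universal example, hence everywhere by naturality.

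Third, I would prove bijectivity. Surjectivity follows from the fact that every class in $KO^0(X, Y)$ admits a representative as a length-one complex of real bundles on $X$ acyclic on $Y$; restricting such a complex to $Y$ produces a triple $(V_1, V_0; \widetilde\sigma|_Y)$ in $\fC_1(X, Y)$ that is a preimage under $\chi$. For injectivity, suppose $\chi[V_1, V_0; \sigma] = 0$. Then the complex $V_1 \xrightarrow{\widetilde\sigma} V_0$ is nullhomotopic through acyclic-on-$Y$ complexes after stabilizing by split elementary complexes $W \xrightarrow{\id_W} W$. Unwinding this homotopy produces an honest extension of $\sigma \oplus \id_W$ to a bundle isomorphism $V_1 \oplus W \to V_0 \oplus W$ over all of $X$, so $(V_1, V_0; \sigma)$ becomes elementary after adding an elementary summand, i.e.\ it vanishes in $L_1(X, Y)$.

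\textbf{The main obstacle is precisely this injectivity step.} Upgrading a $K$-theoretic nullhomotopy to an actual bundle isomorphism is not formal; it requires either a skeletal-obstruction argument along the CW structure of $X/Y$ using the connectivity of the stable orthogonal group $\GL_N(\R)$ in the relevant range, or an appeal to the contractibility results for spaces of acyclic complexes of bundles developed in \S2 of \cite{atiyah1964clifford}. Existence, additivity, naturality, uniqueness, and surjectivity are essentially formal once the complex-of-bundles description of $KO^0(X, Y)$ is in place.
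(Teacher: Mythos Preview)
The paper does not give its own proof of this proposition; it is stated with attribution to Atiyah--Bott--Shapiro \cite{atiyah1964clifford} and used as a black box thereafter. Your proposal is essentially a sketch of the original argument from that reference, which is the only sensible route here, and the outline is sound: construction via extension of $\sigma$, well-definedness on $L_1$, surjectivity from the complex-of-bundles model of relative $KO$, and injectivity as the substantive step. You correctly flag that the injectivity step is where the actual content lies and that it requires either the obstruction-theoretic argument or the contractibility results for spaces of acyclic complexes from \cite{atiyah1964clifford}.

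One minor correction: the extension of $\sigma$ to a bundle map $\widetilde\sigma$ over all of $X$ exists outright, without any stabilization, since bundle maps (not bundle isomorphisms) always extend from a closed subspace of a finite CW pair; stabilization only enters later, in the injectivity argument. Also, your uniqueness paragraph is rather vague; in the original this is handled by the naturality axiom together with the fact that the map $L_1(X,\emptyset) \to L_1(X,Y)$ is surjective after identifying $L_1(X,\emptyset)$ with $KO^0(X)$, so the value on absolute classes determines everything.
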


In particular, $L_1(X,Y)$ is an abelian group.

Let $(X, Y)$, $(X', Y')$ be pairs as above. We conclude this section
by exhibiting a product map
$$L_1(X, Y) \otimes L_1(X', Y') \to L_1(X \times X', X
\times Y' \cup Y \times X')$$
that agrees, via $\chi$, with the usual product on relative $K$-theory. 

Let $V=(V_1, V_0; \sigma) \in \Ob(\fC_1(X, Y))$ and $V'=(V_1', V_0';
\sigma') \in
\Ob(\fC_1(X', Y'))$. By Proposition 10.1 in \cite{atiyah1964clifford}, we may lift
$\sigma, \sigma'$ to maps
$\widetilde{\sigma}, \widetilde{\sigma}'$ of bundles over $X$ and
$X'$, respectively.

Thinking of 
$$0 \to V_1 \xrightarrow{\widetilde{\sigma}} V_0 \to 0$$
$$0 \to V_1' \xrightarrow{\widetilde{\sigma}'} V_0' \to 0$$
as complexes of bundles with $V_1, V_1'$ in degree $1$ and $V_0, V_0'$ in degree
$0$, we may take their tensor product 
$$0 \to V_1 \otimes V_1' \xrightarrow{\tau_2}  (V_1 \otimes
V_0') \oplus (V_0 \otimes V_1') \xrightarrow{\tau_1} V_0 \otimes V_0'
\to 0,$$
where
$$\tau_1=\begin{pmatrix} \widetilde{\sigma} \otimes \id_{V_0'} &
  \id_{V_0} \otimes  \widetilde{\sigma}'  \end{pmatrix}$$
$$\tau_2=\begin{pmatrix} -\id_{V_1} \otimes \widetilde{\sigma}' \\
  \widetilde{\sigma} \otimes \id_{V_1'} \end{pmatrix}$$

The result is a complex of vector bundles over $X \times X'$ that is
exact upon restriction to $X
\times Y' \cup Y \times X'$.

Choose a splitting $\pi$ of $\tau_2|_{ X
\times Y' \cup Y \times X'}$. Then,
$$[(V_1 \otimes V_0') \oplus (V_0 \otimes V_1'), (V_0 \otimes V_0')
\oplus (V_1 \otimes V_1'); \begin{pmatrix} \tau_1|_{ X
\times Y' \cup Y \times X'} \\ \pi
  \\ \end{pmatrix}]$$
is an element of $L_1( X \times X', X
\times Y' \cup Y \times X')$.

One may define monoids $L_n(X,Y)$ involving longer sequences
of bundles; see \cite{atiyah1964clifford} Definition 7.1 for details. Denote
elements of $L_n(X,Y)$ by 
$$[V_n, \dots, V_0; \sigma_n, \dots,
\sigma_1].$$ 
There is a map
$$j_n: L_1(X,Y) \to L_n(X,Y)$$
given by
$$[V_1, V_0; \sigma] \mapsto [0, \dots, 0, V_1, V_0; 0, \dots, 0,
\sigma],$$
and, by Proposition 7.4 in \cite{atiyah1964clifford}, $j_n$ is an
isomorphism for all $n$.

We will need the following technical lemma:

\begin{lem}
\label{splitting}
Let $(X,Y)$ be a pair as above, and let $[V_2, V_1, V_0; \sigma_2, \sigma_1] \in
L_2(X,Y)$. If $\pi$ is a splitting of $\sigma_2$, 
$$j_2([V_1, V_0 \oplus V_2; \begin{pmatrix} \sigma_1 \\ \pi
  \\ \end{pmatrix}]) = [V_2, V_1, V_0; \sigma_2, \sigma_1].$$
\end{lem}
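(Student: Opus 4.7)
The plan is to verify the equality directly in $L_2(X,Y)$ by exhibiting elementary objects $E, E'$ of $\fC_2(X,Y)$ together with an isomorphism
$$(0, V_1, V_0 \oplus V_2; 0, \phi) \oplus E \;\xrightarrow{\cong}\; (V_2, V_1, V_0; \sigma_2, \sigma_1) \oplus E'$$
in $\fC_2(X,Y)$, where $\phi$ denotes the map from the statement of the lemma. By definition of $\sim$, any such isomorphism forces the two sides to represent the same class in $L_2(X,Y)$, which is exactly the content of the lemma.

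For the elementary pieces I would take $E = (V_2, V_2, 0; \id_{V_2}, 0)$ on the left summand and $E' = (0, V_2, V_2; 0, \id_{V_2})$ on the right; each is elementary because its exact sequence on $Y$ extends constantly (as the identity on $V_2$) to all of $X$. After adding them, both sides have underlying triple $(V_2,\,V_1 \oplus V_2,\,V_0 \oplus V_2)$ on $X$, differing only in their gluing data on $Y$. I would then construct the isomorphism by taking $\alpha_2 = \id_{V_2}$, $\alpha_0 = \id_{V_0 \oplus V_2}$, and $\alpha_1 : V_1 \oplus V_2 \to V_1 \oplus V_2$ to be the self-map whose restriction to $Y$ is
$$\alpha_1|_Y = \begin{pmatrix} \id_{V_1} - \sigma_2 \pi & \sigma_2 \\ \pi & 0 \end{pmatrix}.$$
A routine block-matrix calculation using $\pi \sigma_2 = \id_{V_2}$ shows $(\alpha_1|_Y)^2 = \id$, so $\alpha_1|_Y$ is automatically an isomorphism on $Y$, and verifies that both commutative squares required for a morphism in $\fC_2(X,Y)$ hold on $Y$ for these choices.

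The one nontrivial step---and the main obstacle I anticipate---is extending $\alpha_1|_Y$ to an actual isomorphism of $V_1 \oplus V_2$ defined over all of $X$. By Proposition~10.1 of \cite{atiyah1964clifford}, some extension of $\alpha_1|_Y$ exists as a bundle map, and since invertibility is an open condition, it is automatically an isomorphism on some open neighborhood of $Y$. To promote this to a global isomorphism I would exploit the homotopy equivalence of $(X, Y)$ with a finite CW pair and modify the extension cell by cell on $X \setminus Y$, deforming it via a convex combination with $\id_{V_1 \oplus V_2}$ inside the appropriate path component of $\End(V_1 \oplus V_2)$ and, if necessary, absorbing any residual obstruction by enlarging $E$ and $E'$ by further elementary summands of the form $(W, W, 0; \id_W, 0)$ on both sides to pass to the stable range. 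This is the same kind of extension argument that is used throughout \cite{atiyah1964clifford}, and together with the construction of the previous paragraph it yields the required isomorphism in $\fC_2(X, Y)$ and hence the lemma.
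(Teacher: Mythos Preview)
Your approach is genuinely different from the paper's and, at its core, more direct. The paper does not attempt to build an isomorphism in $\fC_2(X,Y)$ between the two sides. Instead it first stabilizes so that $\dim V_1 > \dim V_2 + \dim X$, invokes Lemma~7.2 of \cite{atiyah1964clifford} to extend $\sigma_2$ to a monomorphism $h:V_2\to V_1$ over all of $X$, uses the proof of Lemma~7.3 to identify $j_2^{-1}([V_2,V_1,V_0;\sigma_2,\sigma_1])$ with $[\coker(h),V_0;\overline{\sigma_1}]$, and finally compares this last class with the one in the lemma via an explicit isomorphism together with a homotopy (Proposition~9.2) showing that two different splittings of $\sigma_2$ give the same $L_1$-class. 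Your idea of simply writing down elementary stabilizers and the map $\alpha_1|_Y$ is cleaner, and your block-matrix verification on $Y$ is correct.

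However, your extension step has a real gap. A convex combination $t\,\widetilde{\alpha_1}+(1-t)\,\id$ need not stay invertible, so that deformation does not produce a global automorphism. More seriously, ``passing to the stable range'' does not kill the obstruction: extending an automorphism of a bundle from $Y$ to $X$, even after stabilization, is obstructed by a class in $KO^{-1}(X,Y)$, which can be nonzero. What actually saves you is the specific form of your $\alpha_1|_Y$. Using $\pi\sigma_2=\id_{V_2}$ one checks directly that
\[
\alpha_1|_Y \;=\; \begin{pmatrix}\id&0\\ \pi&\id\end{pmatrix}\begin{pmatrix}\id&-\sigma_2\\ 0&\id\end{pmatrix}\begin{pmatrix}\id&0\\ 0&-\id\end{pmatrix}\begin{pmatrix}\id&0\\ -\pi&\id\end{pmatrix}.
\]
Now extend $\sigma_2$ and $\pi$ to arbitrary bundle maps $\widetilde{\sigma_2},\widetilde{\pi}$ over $X$ (Proposition~10.1 of \cite{atiyah1964clifford}) and define $\alpha_1$ by the same product with tildes inserted. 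Each factor is either unipotent or a constant sign, hence invertible over all of $X$, so $\alpha_1$ is a global automorphism restricting to $\alpha_1|_Y$ on $Y$. With this one observation your argument goes through and needs no stabilization at all.
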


\begin{proof}
First, suppose $\dim(V_1) > \dim(V_2) + \dim(X)$. Apply Lemma 7.2 in
\cite{atiyah1964clifford} to construct a monomorphism
$$h: V_2 \to V_1$$
that extends $\sigma_2$.
By the proof of Lemma 7.3 in \cite{atiyah1964clifford}, 
$$j_2([\coker(h), V_0; \overline{\sigma_1}]) = [V_2, V_1, V_0; \sigma_2, \sigma_1],$$
and so

$$j_2([\coker(h) \oplus V_2, V_0 \oplus V_2; A]) =
[V_2, V_1, V_0; \sigma_2, \sigma_1],$$
where 
$$A=\begin{pmatrix}
  \overline{\sigma_1} & 0 \\ 0 & \id_{V_2}|_Y \\ \end{pmatrix}.$$
Hence, it suffices to show
$$[\coker(h) \oplus V_2, V_0 \oplus V_2; A]=[V_1, V_0 \oplus V_2; \begin{pmatrix} \sigma_1 \\ \pi
  \\ \end{pmatrix}]$$

Choose a splitting $s$ of $h$, and let 
$$p: V_1 \to \coker(h)$$ 
denote the canonical map. Then we have an isomorphism
$$\begin{pmatrix} p \\ s \\ \end{pmatrix}: V_1 \to \coker(h) \oplus
V_2.$$

Since $s|_Y$ is a splitting of $\sigma_2$, we also have an isomorphism
$$\begin{pmatrix} \sigma_1 \\ s|_Y \\ \end{pmatrix}: V_1|_Y \to V_0|_Y \oplus
V_2|_Y.$$

We have a commutative square

$$
\xymatrix{
V_1|_Y \ar[r]^-{\begin{pmatrix}
    \sigma_1 \\ s|_Y \\ \end{pmatrix}} \ar[dd]^-{\begin{pmatrix} p|_Y \\ s|_Y \\ \end{pmatrix}} & V_0|_Y \oplus
V_2|_Y  \ar[dd]^-{\id_{V_0|_Y \oplus
V_2|_Y}}\\
&\\
\coker(h)|_Y \oplus V_2|_Y \ar[r]^-{A} & V_0|_Y \oplus
V_2|_Y
}
$$

Thus,

$$[\coker(h) \oplus V_2, V_0 \oplus V_2; A]=[V_1, V_0 \oplus V_2; \begin{pmatrix} \sigma_1 \\ s|_Y
  \\ \end{pmatrix}].$$

Notice that we have an object
$$[V_1 \times I, (V_0 \oplus V_2) \times I; t \begin{pmatrix} \sigma_1 \\ s|_Y
  \\ \end{pmatrix} + (1-t) \begin{pmatrix} \sigma_1 \\ \pi
  \\ \end{pmatrix}]$$
in $\fC_1(X \times I ,Y \times I)$ whose restrictions to $X \times \{0\}$
and $X \times \{1\}$ are $[V_1, V_0 \oplus V_2; \begin{pmatrix} \sigma_1 \\ \pi
  \\ \end{pmatrix}]$ and $[V_1, V_0 \oplus V_2; \begin{pmatrix} \sigma_1 \\ s|_Y
  \\ \end{pmatrix}]$, respectively. It now follows from Proposition
9.2 in \cite{atiyah1964clifford} that 
$$[V_1, V_0 \oplus V_2; \begin{pmatrix} \sigma_1 \\ s|_Y
  \\ \end{pmatrix}]=[V_1, V_0 \oplus V_2; \begin{pmatrix} \sigma_1 \\ \pi
  \\ \end{pmatrix}] .$$

This finishes the case where $\dim(V_1) > \dim(V_2) + \dim(X)$.

For the general case, choose a bundle $E$ such that 
$$\dim(E) + \dim(V_1) > \dim(V_2) + \dim(X).$$

Define
$$U := [V_2, V_1 \oplus E, V_0 \oplus E; \begin{pmatrix} \sigma_2 \\
  0 \end{pmatrix}, \begin{pmatrix} \sigma_1 && 0 \\
  0 && \id_{E}|_Y \end{pmatrix}],$$
$$U' := [V_1 \oplus E, V_0 \oplus E \oplus V_2 ; \begin{pmatrix}
  \sigma_1 && 0 \\0 && \id_{E}|_Y \\ \pi &&  0 \\ \end{pmatrix}]$$

Notice that 
$$[V_2, V_1, V_0; \sigma_2, \sigma_1]=U,$$
and 
$$[V_1, V_0 \oplus V_2; \begin{pmatrix} \sigma_1 \\ \pi
  \\ \end{pmatrix}] = U',$$
so that it suffices to show that $j(U') = U$. Since
$\begin{pmatrix} \pi & 0  \end{pmatrix}$ is a splitting of  $\begin{pmatrix}
  \sigma_2  \\0 \\  \end{pmatrix}$, this follows from the case we have already considered.
\end{proof}

Now, the pairing
$$L_1(X, Y) \otimes L_1(X', Y') \to L_1(X \times X', X
\times Y' \cup Y \times X')$$ described in Proposition 10.4 of \cite{atiyah1964clifford} is given by
sending a simple tensor
$$[V_1, V_0; \sigma] \otimes [V_1', V_0'; \sigma']$$
to
$$j_2^{-1}  (  [V_1 \otimes V_1' , (V_1 \otimes
V_0') \oplus (V_0 \otimes V_1') , V_0 \otimes V_0'; \tau_2|_{X
\times Y' \cup Y \times X'},
\tau_1|_{X
\times Y' \cup Y \times X'}]);$$
this follows from the proof of Proposition 10.4.

Thus, by Lemma~\ref{splitting}, the map
$$\Ob(\fC_1(X, Y)) \times \Ob(\fC_1(X', Y')) \to L_1(X \times X', X \times
Y' \cup Y \times X')$$
given by
$$(V, V') \mapsto [(V_1 \otimes V_0') \oplus (V_0 \otimes V_1'), (V_0 \otimes V_0')
\oplus (V_1 \otimes V_1'); \begin{pmatrix} \tau_1|_{X
\times Y' \cup Y \times X'} \\ \pi
  \\ \end{pmatrix}]$$
determines
\begin{itemize}
\item[(a)] a well-defined pairing on $\Ob(\fC_1(X, Y)) \times \Ob(\fC_1(X', Y'))$ up to our choices of liftings $\widetilde{\sigma}$,
$\widetilde{\sigma'}$ and splitting $\pi$, and 
\item[(b)] a pairing 
$$L_1(X, Y) \otimes L_1(X', Y') \to L_1(X \times X', X
\times Y' \cup Y \times X')$$
that coincides with the pairing in Proposition 10.4 of
\cite{atiyah1964clifford}.
\end{itemize}

Let $[V]$,$[V']$ denote the classes represented by $V$ and $V'$ in $L_1(X,
Y)$ and $L_1(X', Y')$. Define $$[V] \otimes_{L_1} [V']:=[(V_1 \otimes V_0') \oplus (V_0 \otimes V_1'), (V_0 \otimes V_0')
\oplus (V_1 \otimes V_1'); \begin{pmatrix} \tau_1|_{X
\times Y' \cup Y \times X'} \\ \pi
  \\ \end{pmatrix}].$$

\begin{rem}
\label{10.4}
By Proposition 10.4 in \cite{atiyah1964clifford} and the above remarks,
$$\chi([V]) \otimes \chi([V']) = \chi([V] \otimes_{L_1} [V']).$$
\end{rem}


\subsection{A generalized Atiyah-Bott-Shapiro construction applied to
  matrix factorizations}
\label{ABS}

In this section, we construct the maps $\phi^\C_f$ and $\phi^\R_f$
described in the introduction. We begin with a discussion of the
Atiyah-Bott-Shapiro construction (\cite{atiyah1964clifford} Part
III). Following Atiyah-Bott-Shapiro,
we work with real Clifford algebras and $KO$-theory, and we
point out that one may perform a similar construction involving complex
Clifford algebras and $KU$-theory.


\subsubsection{The Atiyah-Bott-Shapiro construction}

Define
$$q_n:=-x_1^2 - \cdots - x_n^2 \in \R[x_1, \dots, x_n]$$
for all $n \ge 1$, and set $C_n:=\cliff_\R(q_n)$. We also set
$C_0:=\R$; we will think of $C_0$ as a $\Z/2\Z$-graded algebra concentrated in
degree 0.

Let $M(C_n)$ denote the free abelian group generated by isomorphism
classes of finitely-generated, indecomposable $\Z/2\Z$-graded left $C_n$-modules. There are evident injective maps
$$i_n: C_n \to C_{n+1}$$
for all $n \ge 0$; these injections induce homomorphisms
$$i_n^*: M(C_{n+1}) \to M(C_n)$$
via restriction of scalars. Set
$$A_n:= M(C_n)/i_n^*(M(C_{n+1})).$$

Define $D^n$ to be the closed disk of radius 1 in $\R^n$. An important special case of the classical Atiyah-Bott-Shapiro construction is
the group isomorphism
$$\alpha_n: A_n \xrightarrow{\cong} L_1(D^n, \partial D^n)$$
that appears in \cite{atiyah1964clifford} Theorem 11.5. $\alpha_n$ is defined as follows: let $M=M_1 \oplus M_0$ be a finitely generated $\Z/2\Z$-graded left
$C_n$-module. We use the $\R$-vector spaces $M_1$ and $M_0$ to construct real vector bundles over $D^n$:
$$V_1:= D^n \times M_1$$
$$V_0:=D^n \times M_0$$
and we define a map
$$\sigma: V_1 \to V_0$$
given by $(x, m) \mapsto (x, x \cdot m)$, where $\cdot$ denotes the
action of $C_n$ on $M$. Here, we are thinking of $D^n \subseteq \R^n$
as a subset of $C_n$. Notice that $\sigma$ restricts to an
isomorphism of bundles over $\partial D^n$. Thus, we have constructed
an element
$[V_1, V_0; \sigma] \in L_1(D^n, \partial D^n).$ Define
$$\alpha_n([M]) = [V_1, V_0; \sigma].$$
We refer the reader to \cite{atiyah1964clifford} for verification that
the mapping 
$$[M] \mapsto [V_1, V_0; \sigma]$$
is well-defined on the quotient $A_n$ and determines an isomorphism. 


\subsubsection{A more general construction}
\label{ABSsection}
Let $f \in (x_1, \dots, x_{n}) \subseteq Q:=\R[x_1, \dots,
x_n]$. Choose real numbers $\epsilon, \delta,$ and $t$ such that $\epsilon >0$, $0 <
\delta << \epsilon$, and $t \in (-\delta, 0)$ in such a way that we may construct
a negative Milnor fiber
$F_f^-$ as in
Section~\ref{realmilnorfiber}.

Denote by
$B_\epsilon$ the closed ball of radius $\epsilon$ in $\R^n$ centered
at the origin. We now construct
a map 
$$\Ob(\MF(Q,f)) \to L_1(B_\epsilon, F_f^-)$$
that
\begin{itemize}
\item[(a)] recovers the Atiyah-Bott-Shapiro construction via
  the Buchweitz-Eisenbud-Herzog equivalence (Theorem~\ref{BEH}) when $f=q_n$, and
\item[(b)] descends to a group homomorphism
$$K_0[\MF(Q, f)] \to L_1(B_\epsilon, F_f^-).$$
\end{itemize}

We emphasize that a similar construction involving complex polynomials
and their Milnor fibers may be performed \emph{mutatis
mutandis}. One may also perform the following construction using the
positive Milnor fiber $F_f^+$ of $f$.

Let $P=(P_1 \darrow{d_1}{d_0} P_0)$ be a matrix factorization of $f$ over $Q$. Denote by $C(B_\epsilon)$ the ring of
$\R$-valued continuous functions on $B_\epsilon$. Applying extension of scalars along the inclusion
$$Q \into C(B_\epsilon),$$
we obtain a map
$$P_1 \otimes_Q C(B_\epsilon) \xrightarrow{d_1 \otimes \id} P_0
\otimes_Q C(B_\epsilon)$$
of finitely generated projective $C(B_\epsilon)$-modules.

The category of real vector bundles over $B_\epsilon$ is
equivalent to the category of finitely generated projective
$C(B_\epsilon)$-modules; on objects, the equivalence sends a bundle to
its space of sections.
Let 
$$V_1 \xrightarrow{d_1} V_0$$
be a map of real vector bundles over $B_\epsilon$ corresponding to the above map $d_1 \otimes \id$ under this
equivalence. Since $d_1 \circ d_0 = f \cdot \id_{P_0}$ and $d_0 \circ d_1 = f \cdot \id_{P_1}$, and since the restriction of the polynomial $f$, thought of as a map $\R^n \to \R$, to $F_f^- = B_{\epsilon} \cap f^{-1}(t)$ is constant with value $t \ne 0$, $d_1|_{F_f^-}$ is an isomorphism of vector bundles on $F_f^-$. Its inverse is the
restriction to $F_f^-$ of the map $V_0 \to V_1$ determined by 
$$P_0
\otimes_Q C(B_\epsilon) \xrightarrow{\frac{1}{t} (d_0 \otimes \id)}
P_1 \otimes_Q C(B_\epsilon).$$
Define $\Phi^\R_f(P_1 \darrow{d_1}{d_0} P_0) = (V_1, V_0; d_1|_{F_f^-}) \in \Ob
(\fC_1(B_\epsilon, F_f^-))$. 

\begin{rem}
\label{BVS}
The map analogous to $\Phi^\R_f$ in the setting of polynomials over $\C$
and $KU$-theory appears in
\cite{buchweitz2012index}; we discuss this in detail in Section~\ref{thetavanishing}.
\end{rem}

A morphism in $Z^0\MF(Q,f)$ determines a
morphism in $\fC_1(B_\epsilon, F_f^-)$ in an obvious way (see
Section~\ref{mf} for the definition of the category $Z^0\MF(Q,f)$). Hence, we have shown:

\begin{prop}
There is an additive functor 
$$\Phi^\R_f: Z^0\MF(Q,f) \to \fC_1(B_\epsilon, F_f^-)$$
given, on objects, by 
$$(P_1 \darrow{d_1}{d_0} P_0) \mapsto [V_1, V_0; d_1|_{F_f^-}].$$
\end{prop}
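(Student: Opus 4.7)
The construction on objects has already been exhibited prior to the statement, so the remaining content of the proposition is (i) defining $\Phi^\R_f$ on morphisms, (ii) checking functoriality, and (iii) verifying additivity. My plan is to handle these in that order, leveraging the Serre-Swan equivalence between finitely generated projective $C(B_\epsilon)$-modules and real vector bundles on $B_\epsilon$ throughout.

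For morphisms, given $\alpha=(\alpha_1,\alpha_0) : P \to P'$ a degree-$0$ cycle in $\MF(Q,f)$, I would apply extension of scalars along $Q \hookrightarrow C(B_\epsilon)$ to the $Q$-linear maps $\alpha_i : P_i \to P_i'$, producing $C(B_\epsilon)$-linear maps $\alpha_i \otimes \id$, which correspond under Serre-Swan to bundle maps $\widetilde{\alpha}_i : V_i \to V_i'$. I would then set $\Phi^\R_f(\alpha) := (\widetilde{\alpha}_1, \widetilde{\alpha}_0)$. The key check is that this really lands in $\fC_1(B_\epsilon, F_f^-)$: one needs the square
\[
\begin{CD}
V_1|_{F_f^-} @>{d_1|_{F_f^-}}>> V_0|_{F_f^-} \\
@V{\widetilde{\alpha}_1|_{F_f^-}}VV @VV{\widetilde{\alpha}_0|_{F_f^-}}V \\
V_1'|_{F_f^-} @>{d_1'|_{F_f^-}}>> V_0'|_{F_f^-}
\end{CD}
\]
to commute, and this is immediate from the vanishing of $\partial \alpha$ in $\MF(Q,f)$, which in particular gives $d_1' \circ \alpha_1 = \alpha_0 \circ d_1$ as $Q$-linear maps; extension of scalars and restriction to $F_f^-$ preserve this commutativity.

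Functoriality is then automatic: extension of scalars along $Q \to C(B_\epsilon)$ is a functor, and Serre-Swan is an equivalence, so $\Phi^\R_f(\id_P) = \id_{\Phi^\R_f(P)}$ and $\Phi^\R_f(\beta \circ \alpha) = \Phi^\R_f(\beta) \circ \Phi^\R_f(\alpha)$. For additivity, I would note that $-\otimes_Q C(B_\epsilon)$ preserves finite direct sums of projective $Q$-modules, and under Serre-Swan this corresponds to direct sum of bundles; since the odd differential on a direct sum of matrix factorizations is the direct sum of the individual differentials, and its restriction to $F_f^-$ is the direct sum of the restrictions, one gets $\Phi^\R_f(P \oplus P') \cong \Phi^\R_f(P) \oplus \Phi^\R_f(P')$ canonically, with the zero object mapping to the zero object.

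None of the steps presents a serious obstacle; the only subtle point is making sure the restriction $d_1|_{F_f^-}$ really is an isomorphism of bundles on $F_f^-$, but this was already established in the discussion preceding the statement (using that $f$ is constant with nonzero value $t$ on $F_f^-$ together with the matrix factorization relation $d_0 d_1 = f \cdot \id$). Once that is in hand, the morphism-level construction and the verification that squares commute on $F_f^-$ form the main content, and these are both direct consequences of naturality of extension of scalars.
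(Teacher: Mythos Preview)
Your proposal is correct and follows exactly the approach the paper takes: the paper simply remarks that ``a morphism in $Z^0\MF(Q,f)$ determines a morphism in $\fC_1(B_\epsilon, F_f^-)$ in an obvious way,'' and you have spelled out precisely what that obvious way is (extension of scalars along $Q \hookrightarrow C(B_\epsilon)$, Serre--Swan, and the cycle condition giving commutativity of the required square). There is no additional content in the paper's treatment beyond the object-level construction preceding the statement.
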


In particular, we have a map
$$\Ob(\MF(Q,f)) \to L_1(B_\epsilon, F_f^-).$$

Suppose $f = q_n$. Then $\epsilon$ can be
chosen to be 1 in the construction of the negative Milnor fiber
$F_f^-$, and the fiber can be
chosen to be exactly $S^{n-1} \subseteq \R^n$.

Let $\Iso([\MF(Q,f)])$ and $\Iso(\fgmod_{\Z/2\Z}(\cliff_\R(q_n)))$
denote the sets of isomorphism classes of objects in $[\MF(Q,f)]$ and $\fgmod_{\Z/2\Z}(\cliff_\R(q_n))$. It is easy to
check that one has a commutative triangle
$$
\xymatrix{
\Iso(\fgmod_{\Z/2\Z}(\cliff_\R(q_n))) \ar[r]^-{\ABS}  \ar[dd]^-{{[\Theta]}} & L_1(B_1, F_f^-) \\
& \\
\Iso({[\MF(Q,f)]}) \ar[ruu]^-{\Phi^\R_f}
}
$$
where $[\Theta]$ denotes the bijection on isomorphism classes
induced by the explicit construction $\Theta$ of the
Buchweitz-Eisenbud-Herzog equivalence (Theorem~\ref{BEH}) provided in the proof of
Theorem 14.7 of \cite{yoshino1990maximal}, and $\ABS$ denotes the Atiyah-Bott-Shapiro construction. Hence, our construction recovers the
Atiyah-Bott-Shapiro construction via the Buchweitz-Eisenbud-Herzog
equivalence when $f=q_n$.

Our next goal is to show that $\Phi^\R_f$ induces a map on $K$-theory:

\begin{prop}
\label{ABSmap}
$\Phi^\R_f$ induces a group homomorphism
$$\phi^\R_f: K_0[\MF(Q, f)] \to L_1(B_\epsilon, F_f^-).$$
\end{prop}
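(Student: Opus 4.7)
The plan is to verify three properties of $\Phi^\R_f$ in sequence: additivity under direct sums, which is immediate from the construction; that it sends trivial matrix factorizations to elementary objects in $\fC_1(B_\epsilon, F_f^-)$; and that it respects the distinguished-triangle relation defining the triangulated $K_0$.

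The first two properties are straightforward. For a trivial factorization of the form $E \darrow{f\cdot\id_E}{\id_E} E$, the odd-to-even differential restricts to $t\cdot\id$ on $F_f^-$ and extends globally to $t\cdot\id_V$ on $B_\epsilon$; the other form is handled identically, so trivial matrix factorizations map to elementary objects, which represent zero in $L_1(B_\epsilon, F_f^-)$. I would then combine Proposition~\ref{trivialmf} with Lemma~\ref{additive}, applied to $\fC = Z^0\MF(Q,f)$ and $\fE$ the class of trivial matrix factorizations (closed under direct sums and summands in the idempotent-complete category $Z^0\MF(Q,f)$), to conclude that isomorphic objects in $[\MF(Q,f)]$ differ in $Z^0\MF(Q,f)$ only by trivial summands, hence produce equal classes in $L_1(B_\epsilon, F_f^-)$ by additivity of $\Phi^\R_f$. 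This descends $\Phi^\R_f$ to a well-defined map from isomorphism classes in $[\MF(Q,f)]$ into $L_1(B_\epsilon, F_f^-)$.

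The crux is the distinguished-triangle relation. For a morphism $\alpha \colon P \to P'$ in $Z^0\MF(Q,f)$, I would introduce the linear deformation
$$\cone_s(\alpha) \;=\; \Bigl(P_0'\oplus P_1 \,\darrow{\begin{pmatrix} d_0' & s\alpha_1 \\ 0 & -d_1\end{pmatrix}}{\begin{pmatrix} d_1' & s\alpha_0 \\ 0 & -d_0\end{pmatrix}}\, P_1'\oplus P_0\Bigr), \qquad s \in [0,1],$$
which is a matrix factorization of $f$ for every $s$ thanks to the cycle identities $d_0'\alpha_0 = \alpha_1 d_0$ and $d_1'\alpha_1 = \alpha_0 d_1$. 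Applying $\Phi^\R_f$ gives a continuous family in $\fC_1(B_\epsilon, F_f^-)$ with fixed underlying bundles and restricted differentials that are upper triangular with invertible diagonal entries $d_0'|_{F_f^-}$ and $-d_1|_{F_f^-}$, hence isomorphisms throughout. The homotopy-invariance statement in Proposition~9.2 of \cite{atiyah1964clifford} then forces $[\Phi^\R_f(\cone(\alpha))] = [\Phi^\R_f(\cone_0(\alpha))]$ in $L_1(B_\epsilon, F_f^-)$; and $\cone_0(\alpha)$ splits in $Z^0\MF(Q,f)$ into a direct sum isomorphic, up to an $(\id,-\id)$ sign change on summands, to $P'[1] \oplus P$. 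Combined with additivity and the behavior of $\Phi^\R_f$ under the shift, this yields the triangle relation.

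I expect the main technical obstacle to be precisely the behavior of $\Phi^\R_f$ under the shift $[1]$, which swaps the even and odd bundles and negates the differential. A rotation argument in $\fC_1(B_\epsilon, F_f^-)$, exploiting the relation $d_1 d_0 = t \cdot \id$ on $F_f^-$ (so that $d_0|_{F_f^-}$ is essentially $(d_1|_{F_f^-})^{-1}$ up to a scalar), should produce an explicit homotopy showing that $\Phi^\R_f(P) \oplus \Phi^\R_f(P[1])$ and an elementary object represent the same class in $L_1(B_\epsilon, F_f^-)$, completing the verification that $\phi^\R_f$ descends to $K_0$.
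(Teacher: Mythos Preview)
Your deformation argument $\cone_s(\alpha)$ for the triangle relation is essentially the paper's own argument (the paper runs the same homotopy and invokes Proposition~9.2 of \cite{atiyah1964clifford}), and your rotation idea for the shift is a reasonable variant of what the paper does via $\cone(\id_P)$. The genuine gap is in your second step, where you invoke Lemma~\ref{additive}.

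You assert parenthetically that the class $\fE$ of trivial matrix factorizations is closed under direct summands in $Z^0\MF(Q,f)$. Over the non-local ring $Q=\R[x_1,\dots,x_n]$ this is not obvious and you give no argument: a summand $P$ of a trivial matrix factorization is contractible and has $\coker(d_1)$ projective over $Q/(f)$, but triviality forces $\coker(d_1)$ to be \emph{free} over $Q/(f)$, and Quillen--Suslin says nothing about $Q/(f)$. If instead you enlarge $\fE$ to all contractible objects (so that closure under summands is tautological), you must then show that contractible $P$ satisfies $[\Phi^\R_f(P)]=0$ in $L_1(B_\epsilon,F_f^-)$; but a summand of an elementary object in $\fC_1(X,Y)$ need not be elementary, nor need it represent zero in $L_1(X,Y)$, so ``$\Phi^\R_f(\text{trivial})$ is elementary'' does not by itself suffice. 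The paper supplies exactly this missing ingredient by a localization-and-shrinking argument: given contractible $P$ one picks $g\in Q$ with $g(0)\ne 0$ making $\coker(d_1)_g$ free over $Q_g/(f)$, applies Proposition~\ref{cokerfree} over $Q_g$ to exhibit $P_g\oplus E'\cong E$ with $E,E'$ trivial, and then uses Lemma~\ref{technical} (independence of the $L_1$-class from the good pair $(\epsilon,t)$) to transport the vanishing back to the original ball. This is the substantive idea your proposal lacks; once it is in place, the paper in fact uses it (via $\cone(\id_P)$ contractible) as the input to the shift relation, rather than the direct rotation you sketch.
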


We will
adopt the following notational conventions for the purposes of the
proof of Proposition~\ref{ABSmap}:

\begin{itemize}

\item[(1)]  A pair $(\epsilon, t)$ is a \emph{good pair} if $\epsilon
  >0$, $t < 0$, and the map 
$$\psi: B_\epsilon \cap f^{-1}((-\delta, 0) \cup (0, \delta)) \to (-\delta, 0) \cup (0, \delta)$$
from Section~\ref{realmilnorfiber} is a locally trivial
fibration for some $\delta>0$ such that $$0 < |t| < \delta <<
\epsilon.$$

\item[(2)] If $(\epsilon, t)$ is a good pair, we denote the negative
  Milnor fiber $B_\epsilon \cap f^{-1}(t)$ by $F_t^-$.
\end{itemize}

We will need the following technical lemma:

\begin{lem}
\label{technical}
Let $(\epsilon_1, t_1), (\epsilon_2, t_2)$ be good pairs. Then there is an
isomorphism 
$$g: L_1(B_{\epsilon_1}, F_{t_1}^-) \xrightarrow{\cong} L_1(B_{\epsilon_2},
F_{t_2}^-)$$
yielding a commutative triangle
$$
\xymatrix{
\Ob(\MF(Q,f)) \ar[r]^-{\Phi^\R_f} \ar[d]^-{\Phi^\R_f} & L_1(B_{\epsilon_2},
F_{t_2}^-) \\
L_1(B_{\epsilon_1}, F_{t_1}^-) \ar[ru]^-{g} 
}
$$
\end{lem}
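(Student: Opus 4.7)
The plan is to construct $g$ as a composition of two elementary comparison isomorphisms: one that varies $\epsilon$ while keeping $t$ fixed, and one that varies $t$ while keeping $\epsilon$ fixed. Since the set of good pairs is open and path-connected once we restrict $t$ to the fixed sign, any two good pairs $(\epsilon_1,t_1), (\epsilon_2,t_2)$ can be connected via an intermediate good pair $(\epsilon_0, t_0)$ with $\epsilon_0 \le \min\{\epsilon_1,\epsilon_2\}$ and $|t_0|$ sufficiently small, and we factor $g$ through $L_1(B_{\epsilon_0}, F_{t_0}^-)$.

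\textbf{Variation of $\epsilon$ with $t$ fixed.} Suppose $\epsilon_0 \le \epsilon$ and both $(\epsilon_0,t), (\epsilon,t)$ are good. A radial-rescaling argument, combined with the fact that $\partial B_{\epsilon'}$ meets $f^{-1}(t)$ transversely for all $\epsilon' \in [\epsilon_0,\epsilon]$, shows that the inclusion of pairs $\iota : (B_{\epsilon_0}, F_{\epsilon_0,t}^-)\hookrightarrow (B_{\epsilon}, F_{\epsilon,t}^-)$ is a homotopy equivalence of pairs, and hence induces an isomorphism $\iota^* : L_1(B_\epsilon, F_{\epsilon,t}^-)\xrightarrow{\cong} L_1(B_{\epsilon_0}, F_{\epsilon_0,t}^-)$. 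Compatibility with $\Phi^\R_f$ is automatic: the vector bundles $V_1, V_0$ come from extending scalars from $Q$ to $C(B_\epsilon)$, and restriction along $\iota$ corresponds to further extension of scalars to $C(B_{\epsilon_0})$, so the image of a matrix factorization under $\Phi^\R_f$ on the smaller pair is literally the pullback of its image on the larger pair.

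\textbf{Variation of $t$ with $\epsilon$ fixed.} Given $(\epsilon, t_1), (\epsilon, t_2)$ good, both $t_i$ lie in a common interval $(-\delta,0)$ over which the Milnor fibration $\psi$ is locally trivial. Using a lift of a vector field on $(-\delta,0)$ to $B_\epsilon \cap f^{-1}((-\delta,0))$ tangent to $\partial B_\epsilon$, one integrates to obtain a diffeomorphism $\varphi : (B_\epsilon, F_{\epsilon,t_1}^-)\xrightarrow{\cong} (B_\epsilon, F_{\epsilon,t_2}^-)$ that is the identity on $\partial B_\epsilon$ and so is isotopic to the identity through self-maps of $B_\epsilon$ (the ball is contractible). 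The induced map $\varphi^* : L_1(B_\epsilon, F_{\epsilon,t_2}^-)\xrightarrow{\cong} L_1(B_\epsilon, F_{\epsilon,t_1}^-)$ is an isomorphism. The crucial compatibility is that $\Phi^\R_f$ applied to $P = (P_1 \darrow{d_1}{d_0} P_0)$ gives the \emph{same} pair of bundles $V_1, V_0$ on $B_\epsilon$ together with the restriction of the \emph{same} map $d_1 : V_1 \to V_0$ to either $F_{\epsilon,t_1}^-$ or $F_{\epsilon,t_2}^-$; the isotopy of $\varphi$ to $\id_{B_\epsilon}$, combined with Proposition 9.2 of \cite{atiyah1964clifford}, then identifies $\varphi^*[V_1, V_0; d_1|_{F^-_{\epsilon,t_2}}]$ with $[V_1, V_0; d_1|_{F^-_{\epsilon,t_1}}]$ in $L_1$.

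\textbf{Assembly and main obstacle.} Define $g$ as the composition $(\iota_2^*)^{-1}\circ \varphi_2^*\circ \iota_1^* \circ \varphi_1^*$ (or the appropriate reshuffling) through $(\epsilon_0, t_0)$; a diagram chase using the two compatibilities above shows that both triangles produced by the two steps commute, so the full triangle in the lemma commutes. Independence of $g$ from the auxiliary choice of $(\epsilon_0,t_0)$ follows from the same two comparison principles applied twice over. The main technical obstacle I anticipate is the construction in the second step of a self-diffeomorphism of $B_\epsilon$ carrying one Milnor fiber to another while remaining the identity near $\partial B_\epsilon$ (and verifying the resulting map on $L_1$ is compatible with $\Phi^\R_f$); this rests on the careful use of local triviality of the Milnor fibration and the fact that the restriction of $d_1$ to the moving fiber is controlled globally by the matrix factorization data on all of $B_\epsilon$.
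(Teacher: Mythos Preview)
Your overall factorization strategy---handle $\epsilon$-variation and $t$-variation separately, then compose through an intermediate good pair---matches the paper's. The difference lies in the $t$-variation step at fixed $\epsilon$. You construct a diffeomorphism $\varphi$ of $B_\epsilon$ carrying $F^-_{t_1}$ to $F^-_{t_2}$ by integrating a lifted vector field, then argue via an isotopy to the identity and Proposition~9.2 of \cite{atiyah1964clifford}. (A small slip: tangency of the lift to $\partial B_\epsilon$ only ensures the flow \emph{preserves} the boundary sphere, not that it fixes it pointwise, so your stated reason for the isotopy is wrong; however, the flow itself already furnishes an isotopy from $\id$ to $\varphi$ through maps sending $F^-_{t_1}$ into level sets where $d_1$ is invertible, which is what you actually need.) The paper takes a cleaner route: with $t_2 < t_1 < 0$ it introduces the thickened preimage $F^-_{[t_2,t_1]} := B_\epsilon \cap f^{-1}([t_2,t_1])$. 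Since $f$ avoids $0$ on this set, $d_1$ is an isomorphism over all of it, so $\Phi^\R_f$ lands directly in $L_1(B_\epsilon, F^-_{[t_2,t_1]})$; the inclusions $F^-_{t_i} \hookrightarrow F^-_{[t_2,t_1]}$ are homotopy equivalences, hence the pullbacks $L_1(B_\epsilon, F^-_{[t_2,t_1]}) \to L_1(B_\epsilon, F^-_{t_i})$ are isomorphisms and the commuting triangles are tautological. This avoids building any diffeomorphism or invoking Proposition~9.2 at all. Your approach is more hands-on and would adapt to settings where no such thickening is available; the paper's is shorter and makes the compatibility with $\Phi^\R_f$ transparent.
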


\begin{proof}
The case where $t_1=t_2$ is immediate, so we may assume $t_1 \ne t_2$. First, suppose $\epsilon_1 = \epsilon_2$. Without loss, assume $t_2 < t_1$.

Set $F^-_{[t_2,t_1]}:=f^{-1}([t_2, t_1])$. Since the inclusions
$$F^-_{t_1} \hookrightarrow F^-_{[t_2,t_1]} \text{, }F^-_{t_2} \hookrightarrow F^-_{[t_2,t_1]}$$
are homotopy equivalences, the pullback maps
$$L_1(B_{\epsilon_1}, F_{[t_2,t_1]}) \to  L_1(B_{\epsilon_1},
F_{t_1}) \text{, } L_1(B_{\epsilon_1}, F_{[t_2,t_1]}) \to  L_1(B_{\epsilon_1}, F_{t_2})$$
are isomorphisms.

We have commuting triangles 
$$
\xymatrix{
\Ob(\MF(Q,f)) \ar[r]^-{\Phi^\R_f} \ar[d]^-{\Phi^\R_f} & L_1(B_{\epsilon_1},
F_{t_1}^-) \\
L_1(B_{\epsilon_1}, F_{{[t_2, t_1]}}^-) \ar[ru]^-{\cong} 
}
$$

for $i=1,2$. It follows that the result holds when $\epsilon_1 = \epsilon_2$.

For the general case, assume,
without loss, that $|t_2|
< |t_1|$. Then $(\epsilon_1, t_2)$ is also a good pair. By the cases we've
already considered, the result holds for the pairs $(\epsilon_1, t_1)$
and $(\epsilon_1, t_2)$, and also for the pairs $(\epsilon_1, t_2)$ and
$(\epsilon_2, t_2)$. Hence, the result holds for the pairs $(\epsilon_1,
t_1)$, $(\epsilon_2, t_2)$.
\end{proof}

We now prove Proposition~\ref{ABSmap}:

\begin{proof}
It is not hard to see that $\Phi^\R_f(P \oplus P') = \Phi^\R_f(P) \oplus
\Phi^\R_f(P')$; we need only show that $\phi^\R_f$ is well-defined. First, suppose $P \cong 0$ in $[\MF(Q,
f)]$. Then $\id_P$ is a boundary in $\MF(Q, f)$, and so $\id_P$
factors through a trivial matrix factorization, by Proposition
~\ref{trivialmf}. 

Write $P=(P_1 \darrow{d_1}{d_0} P_0).$ Since $P$ is a summand of a trivial matrix factorization,
$\coker(d_1)$ is a projective $Q/(f)$ module. Choose $g \in Q$ such
that $g(0) \ne 0$ and $\coker(d_1)_g$ is free over $Q_g/(f)$, and choose $\epsilon' \in (0, \epsilon)$
such that $B_{\epsilon'} \cap g^{-1}(0) = \emptyset$. The inclusion $Q \hookrightarrow Q_g$ induces a functor $\MF(Q, f) \to
\MF(Q_g, f).$ Choose $t'$ such that $(\epsilon', t')$ is a good pair. Applying Lemma~\ref{technical}, we have a commutative diagram
$$
\xymatrix{
\Ob(\MF(Q,f)) \ar[r] \ar[dd]^-{\Phi^\R_f} \ar[rdd]^-{\Phi^\R_f} & \Ob(\MF(Q_g,f)) \ar[dd]^-{\Phi^\R_f}\\
& \\
L_1(B_{\epsilon}, F_t^-) \ar[r]^-{\cong} & L_1(B_{\epsilon'},
F_{t'}^-)
}
$$
It is easy to see that the $\phi^\R_f$ is well-defined when $f=0$, so
assume $f \ne 0$. Then $f$ is a non-zero-divisor in $Q$, so we may apply Proposition~\ref{cokerfree} to conclude that the image of $P$ in $\Ob(\MF(Q_g, f))$
maps to $0$ via $\Phi^\R_f$. Hence, the map $\Phi^\R_f: \Ob(\MF(Q, f))
\to L_1(B_{\epsilon}, F_t^-)$ sends $P$ to $0$, as
required.

We now show that, if $\alpha: P \to P'$ is a morphism in $Z^0\MF(Q,f)$,
$\Phi^\R_f(P) \oplus \Phi^\R_f(\cone(\alpha))$ and
$\Phi^\R_f(P')$ represent the same class in $L_1(B_{\epsilon},
F_t^-)$. We start by showing $\Phi^\R_f(P[1]) = -\Phi^\R_f(P)$ in
$L_1(B_{\epsilon}, F_t^-)$. Write $\Phi^\R_f(P)=(V_1, V_0; d_1|_{F_t^-})$,
so that $\Phi^\R_f(P[1])=(V_0, V_1; -d_0|_{F_t^-})$. Since $\cone(\id_P)$ is contractible, the class represented by
$$\Phi^\R_f(\cone(\id_P))=(V_0 \oplus V_1, V_1 \oplus
V_0; \begin{pmatrix} d_0|_{F_t^-} & \id \\ 0 & -d_1|_{F_t^-} \\\end{pmatrix})$$
in $L_1(B_{\epsilon}, F_t^-)$ is $0$. The object 
$$((V_0 \oplus V_1) \times I, (V_1 \oplus
V_0) \times I; \begin{pmatrix} d_0|_{F_t^-} & s \cdot \id \\ 0 & -d_1|_{F_t^-} \\\end{pmatrix})$$
of $\fC_1(B_{\epsilon} \times I, F_t^- \times I)$ restricts to
$\Phi^\R_f(\cone(\id_P))$ at $s = 1$ and $\Phi^\R_f((P \oplus P[1])[1])$ at
$s=0$. Since $(P \oplus P[1])[1] \cong P \oplus P[1]$, we may use
Proposition 9.2 in \cite{atiyah1964clifford} to conclude that $\Phi^\R_f(P[1]) =
-\Phi^\R_f(P)$ in $L_1(B_{\epsilon}, F_t^-)$.

Now, we have
$$\Phi^\R_f(\cone(\alpha)) = (V_0 \oplus V_1, V_1 \oplus
V_0; \begin{pmatrix} d_0|_{F_t^-} & \alpha_1 \\ 0 & -d_1'|_{F_t^-} \\\end{pmatrix}).$$
Using Proposition 9.2 in \cite{atiyah1964clifford} in the same manner as above, we may conclude that
$\Phi^\R_f(\cone(\alpha))$ and
$\Phi^\R_f(P') \oplus \Phi^\R_f(P[1])$ represent the same class in
$L_1(B_{\epsilon}, F_t^-)$. 

Finally, suppose $\alpha: P \xrightarrow{\cong} P'$ is an isomorphism in
$[\MF(Q,f)]$. Then $\cone(\alpha)$ is contractible, and so the results
we just established imply that $\Phi^\R_f(P) = \Phi^\R_f(P')$. Since every
distinguished triangle in $[\MF(Q,f)]$ is isomorphic to one of the
form 
$$P \xrightarrow{\alpha} P' \to \cone(\alpha) \to P[1],$$
and we have shown that $\Phi^\R_f$ preserves such triangles, we are done.
\end{proof}


\subsubsection{The kernel and image of $\phi^\C_f$}
\label{thetavanishing}
Let $Q:=\C[x_1, \dots, x_n]$, and set $\m:=(x_1, \dots, x_n) \subseteq Q$. Fix $f \in \m$, and define $R := Q/(f)$. Assume the hypersurface $R$ has an isolated singularity at the origin in the sense of Definition~\ref{IHS}. Choose $\epsilon,\delta >0$ so that the map
$$B_\epsilon \cap f^{-1}(D_\delta^*) \to D_\delta^*$$
given by $x \mapsto f(x)$
is a locally trivial fibration, as in Section~\ref{milnorfiber}; let
$F_f$ denote the Milnor fiber of $f$ over some value $t \in D_{\delta}^*$. We wish to examine the kernel
and image of the map 
$$\phi_f^\C: K_0[\MF(Q, f)] \to L_1(B_\epsilon,
F_f).$$

Recall that, by Theorem~\ref{bouquet}, $F_f$ is homotopy equivalent to a
wedge sum of $\mu$ copies of $S^{n-1}$, where $\mu$ is the Milnor
number of $f$. Thus, 
\begin{displaymath}
   L_1(B_\epsilon, F_f) \cong KU^0(B_\epsilon, F_f) \cong KU^{-1}(F_f) \cong
\bigoplus_\mu KU^{-1}(S^{n-1}) \cong \left\{
     \begin{array}{ll}
       \Z^\mu & \text{if } n \text{ is even}\\
       0 & \text{if } n \text{ is odd}\\
     \end{array}
   \right.
\end{displaymath}

In particular, when $n$ is odd, $\phi^\C_f=0$.

As we noted in Section~\ref{milnorfiber}, $F_f$ is equipped
with a monodromy homeomorphism
$$h: F_f \xrightarrow{\cong} F_f.$$
Let $S \subseteq D_\delta^*$ denote the circle of radius $|t|$ centered at the origin, and set $E:=B_\epsilon \cap f^{-1}(S)$. One has a long exact
sequence, the \emph{Wang exact sequence} (\cite{dimca1992singularities} page
74)
$$\cdots \to H^i(E) \xrightarrow{j^*} H^i(F_f) \xrightarrow{h^* - 1} 
H^i(F_f) \to H^{i+1}(E) \to \cdots$$
where $j:F_f \into E$ is the
inclusion. One also has an automorphism $T: L_1(B_\epsilon, F_f)
\xrightarrow{\cong} L_1(B_\epsilon, F_f)$ induced by $h$.

We have the following result regarding the image of $\phi_f^\C$:

\begin{prop}
\label{fixedbymonodromy}
$\phi^\C_f(K_0[\MF(Q,f)]) \subseteq \ker(T -1)$.
\end{prop}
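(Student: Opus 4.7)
The plan is to lift $\Phi^\C_f(P)$ to a relative $K$-theory class on the pair $(B_\epsilon, E)$, where $E = B_\epsilon \cap f^{-1}(S)$ is the ``tube'' over the monodromy circle $S$, and then invoke the Wang exact sequence for the fibration $E \to S$ in topological $K$-theory to identify the image of the restriction map with the monodromy-invariant classes. The key geometric observation is that a matrix factorization automatically produces vector-bundle maps that are isomorphisms off the zero locus of $f$, not just on $F_f$.

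First I would note that, for any matrix factorization $P = (P_1 \darrow{d_1}{d_0} P_0)$, the identities $d_0 d_1 = f \cdot \id_{P_1}$ and $d_1 d_0 = f \cdot \id_{P_0}$ show that the induced bundle map $d_1\colon V_1 \to V_0$ is pointwise invertible wherever $f \ne 0$, with inverse $\frac{1}{f} d_0$. In particular $d_1|_E$ is an isomorphism of vector bundles on $E$, so the triple $(V_1, V_0; d_1|_E)$ defines an object of $\fC_1(B_\epsilon, E)$ and represents a class $\widetilde{\Phi}^\C_f(P) \in L_1(B_\epsilon, E)$. The inclusion of pairs $g\colon (B_\epsilon, F_f) \hookrightarrow (B_\epsilon, E)$ (which makes sense because $F_f \subseteq E$) induces, by Remark~\ref{L1pullback}, a pullback $g^*\colon L_1(B_\epsilon, E) \to L_1(B_\epsilon, F_f)$ that sends $\widetilde{\Phi}^\C_f(P)$ to $\Phi^\C_f(P)$ by construction.

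Next I would translate to $KU$-theory via the (complex-vector-bundle analogue of the) isomorphism $\chi$ of Proposition~\ref{bigtheoremABS}. Since $B_\epsilon$ is contractible, the long exact sequence of the pair gives natural isomorphisms $KU^0(B_\epsilon, -) \cong KU^{-1}(-)$, under which $g^*$ corresponds to the restriction $j^*\colon KU^{-1}(E) \to KU^{-1}(F_f)$ along the fiber inclusion $j\colon F_f \hookrightarrow E$. The Wang exact sequence cited in the paragraph preceding the proposition is a formal consequence of the mapping torus description of $E$ and holds, by the same argument, for any generalized cohomology theory; applied to $KU^*$ it gives
\[
\cdots \to KU^{-1}(E) \xrightarrow{j^*} KU^{-1}(F_f) \xrightarrow{h^* - 1} KU^{-1}(F_f) \to \cdots,
\]
so $\im(j^*) = \ker(h^* - 1)$. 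By naturality of $\chi$, the automorphism $T$ corresponds to $h^*$, and hence $\Phi^\C_f(P) = g^*(\widetilde{\Phi}^\C_f(P)) \in \ker(T - 1)$. Since $\phi^\C_f$ is the map induced by $\Phi^\C_f$ on $K_0$, this suffices.

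The main point I would need to justify carefully is the Wang sequence for $KU^*$; this is not difficult but is not quoted in the paper. Once that is in hand, the substance of the argument is really the one-line observation that $d_1$ is invertible wherever $f$ does not vanish, which forces the class $\Phi^\C_f(P)$ to come from the larger closed subspace $E$ and hence to be monodromy-invariant.
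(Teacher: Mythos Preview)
Your argument is correct and shares with the paper the essential observation: because $d_1$ is invertible wherever $f$ does not vanish, $\Phi^\C_f(P)$ lifts to a class in $L_1(B_\epsilon, E)$, and hence $\phi^\C_f$ factors through $l^*$. The difference lies in how the two arguments conclude that $\im(l^*) \subseteq \ker(T-1)$.

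The paper does not invoke a Wang sequence in $KU$-theory. Instead it tensors with $\Q$, passes from $KU^{-1}(-)\otimes\Q$ to $H^{n-1}(-;\Q)$ via Chern characters, and uses only the cohomological Wang sequence already quoted in the text. This works because $F_f \simeq \bigvee_\mu S^{n-1}$, so the Chern map $KU^{-1}(F_f)\otimes\Q \to H^{n-1}(F_f;\Q)$ is an isomorphism, and because $L_1(B_\epsilon,F_f)\cong \Z^\mu$ is torsion-free, so vanishing after $\otimes\,\Q$ suffices. The paper also disposes of the odd-$n$ case separately (where $L_1(B_\epsilon,F_f)=0$).

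Your route---running the Wang sequence directly in $KU^*$---is more economical: it needs no parity split, no rationalization, and no appeal to the bouquet description of $F_f$. The cost is exactly what you flagged: you must justify the Wang long exact sequence for $KU^*$ over $S^1$, which the paper avoids by staying with singular cohomology. Once that is supplied (e.g.\ via the Mayer--Vietoris/mapping-torus argument you allude to), your proof is complete and arguably cleaner.
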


\begin{proof}
The result is obvious when $n$ is odd, since $L_1(B_\epsilon,
F_f)=0$ in this case. Suppose $n$ is even. Notice that $\phi^\C_f(K_0[\MF(Q,f)]) \subseteq
l^*(L_1(B_\epsilon, E))$, where $l: (B_\epsilon, F_f) \into (B_\epsilon, E)$ is the inclusion of pairs. Thus, the result follows from the commutative diagram
$$
\xymatrix{
L_1(B_\epsilon,E) \ar[r]^-{l^*} \ar[d]^-{\cong} \otimes \Q & L_1(B_\epsilon,F_f) \otimes \Q \ar[d]^-{\cong} \ar[r]^-{T-1} & L_1(B_\epsilon,F_f) \otimes \Q \ar[d]^-{\cong} \\
KU^{-1}(E) \otimes \Q \ar[r]^-{j^*} \ar[d] & KU^{-1}(F_f) \otimes \Q \ar[d]^-{\cong} \ar[r]^-{h^*-1} & KU^{-1}(F_f) \otimes \Q \ar[d]^-{\cong} \\
H^{n-1}(E;\Q) \ar[r]^-{j^*} & H^{n-1}(F_f;\Q) \ar[r]^-{h^*-1} & H^{n-1}(F_f;\Q)
}
$$
and the Wang exact sequence. The bottom-most vertical arrows are Chern class maps; the bottom-middle and
bottom-right vertical maps
are isomorphisms because $F_f$ has nonzero odd cohomology only in degree $n-1$.
\end{proof}

The map $\Phi^\C_f: \Ob(\MF(Q,f)) \to L_1(B_\epsilon, F_f)$ is
used in \cite{buchweitz2012index} to study
the Hochster theta pairing. We recall the definition of this pairing:

\begin{defn}
\label{thetadefn}
The \emph{Hochster theta pairing}
$$\theta: K_0[\MF(Q_\m,f)] \times K_0[\MF(Q_\m,f)] \to \Z$$
sends a pair $([P_1 \darrow{d_1}{d_0} P_0], [P_1' \darrow{d_1'}{d_0'} P_0'])$
to
$$l(\Tor^{R_\m}_{2}(\coker(d_1), \coker(d_1'))) -
l(\Tor^{R_\m}_{1}(\coker(d_1), \coker(d_1'))),$$
where $l$ denotes length as an $R_\m$-module.
\end{defn}

\begin{rem} 
Our assumption that $R$ is $\IHS$ guarantees that the lengths in Definition~\ref{thetadefn} are finite. The pairing
$\theta$ was introduced in \cite{hochster1981dimension}; for more
detailed discussions related to this pairing, we refer the reader to
\cite{buchweitz2012index}, \cite{dao2013decent}, and
\cite{moore2011hochster}.
\end{rem}

\begin{rem}
Under our assumptions, by Theorem 4.11 of \cite{dyckerhoff2011compact}, the map
$$K_0[\MF(Q,f)] \to K_0[\MF(Q_{\m}, f)]$$
induced by inclusion is an isomorphism, so we may think of $\theta$ as a pairing on $K_0[\MF(Q,f)]$.
\end{rem}

Let $P = (P_1 \darrow{d_1}{d_0} P_0)$ be a matrix factorization of $f$ over $Q$. We observe that the image of $\phi^\C_f([P])$ under the
isomorphism $L_1(B_\epsilon, F_f) \cong KU^{-1}(F_f)$ coincides with $\alpha(\coker(d_1)_\m)|_{F_f}$, where $\alpha$ is as in Section 4 of
\cite{buchweitz2012index}. Thus, Proposition 4.1 and
Theorem 4.2 of \cite{buchweitz2012index} immediately imply:

\begin{prop}
\label{vanishing}
If $X \in \ker (\phi^\C_f)$, $\theta(X, -): K_0[\MF(Q, f)] \to \Z$
is the zero map.
\end{prop}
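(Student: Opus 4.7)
The plan is to reduce the statement directly to the vanishing results of Buchweitz--van Straten in \cite{buchweitz2012index}, using the identification of topological invariants already noted in the remark preceding the proposition. Write $X = [P]$ with $P = (P_1 \darrow{d_1}{d_0} P_0)$. First I would verify that $\phi^\C_f(X) \in L_1(B_\epsilon, F_f)$ corresponds, under the natural isomorphisms $L_1(B_\epsilon, F_f) \cong KU^0(B_\epsilon, F_f) \cong KU^{-1}(F_f)$, to the class $\alpha(\coker(d_1)_\m)|_{F_f}$ of Section~4 of \cite{buchweitz2012index}. After this identification, the proposition follows by quoting Proposition~4.1 and Theorem~4.2 of \emph{loc.~cit.}, which together express $\theta(X,-)$ as a bilinear functional of the $K$-theoretic classes of its arguments on $F_f$, and therefore force $\theta(X,-)$ to vanish identically as soon as $\phi^\C_f(X) = 0$.

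To verify the identification, observe that both recipes start from the same matrix factorization data. The Buchweitz--van Straten construction presents the maximal Cohen--Macaulay module $\coker(d_1)_\m$ via the map $d_1$, uses the relation $d_0 d_1 = f \cdot \id$ to conclude that $d_1$ is invertible on $\{f \neq 0\}$ (in particular on $F_f$), and packages the resulting bundle datum as a class in $KU^{-1}(F_f)$. The construction of $\Phi^\C_f$ (the complex analogue of Section~\ref{ABSsection}) performs the same maneuver in the language of $L_1$-theory: the correspondence between finitely generated projective $C(B_\epsilon)$-modules and complex vector bundles on $B_\epsilon$ turns $d_1 \otimes_Q 1$ into a bundle map $V_1 \to V_0$, invertible on $F_f$ for the same reason, and the triple $(V_1, V_0; d_1|_{F_f})$ is the associated class in $L_1(B_\epsilon, F_f)$. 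Under the connecting isomorphism (which uses contractibility of $B_\epsilon$), these two classes agree.

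With the identification in hand the argument concludes: Proposition~4.1 of \cite{buchweitz2012index} gives a formula for $\theta(X, Y)$ in terms of an intersection-theoretic pairing of the associated $K$-theory classes on $F_f$, and Theorem~4.2 of \emph{loc.~cit.} confirms that this pairing is bilinear, hence vanishes whenever one of its arguments does. The main obstacle, insofar as there is one, is the bookkeeping involved in the identification above: one must match the sign conventions coming from the $\Z/2\Z$-grading of $P$, the direction of the boundary map $KU^0(B_\epsilon, F_f) \xrightarrow{\cong} KU^{-1}(F_f)$, and the normalization used by Buchweitz--van Straten to define $\alpha$, so that the two a priori separately-defined classes genuinely coincide rather than merely agreeing up to a unit automorphism of $KU^{-1}(F_f)$. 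Once this normalization is settled, the proposition is immediate.
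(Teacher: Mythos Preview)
Your approach is exactly the paper's: identify $\phi^\C_f([P])$ with $\alpha(\coker(d_1)_\m)|_{F_f}$ under $L_1(B_\epsilon,F_f)\cong KU^{-1}(F_f)$, and then invoke Proposition~4.1 and Theorem~4.2 of \cite{buchweitz2012index}. One small remark: your concern about matching normalizations exactly is unnecessary for this proposition---agreement up to a unit automorphism of $KU^{-1}(F_f)$ already suffices, since a class vanishes if and only if any unit multiple of it does.
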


Set $K_0[\MF(Q, f)]_{\tors}$ to be the torsion subgroup of
$K_0[\MF(Q, f)]$. We conclude this section with the following explicit
description of $\ker(\phi^\C_f)$ when $n=2$:

\begin{prop}
\label{longresult}
If $f \in (x_1, x_2) \subseteq Q=\C[x_1, x_2]$, and the hypersurface
$Q/(f)$ has an isolated singularity at the origin in the sense of Definition~\ref{IHS}, $\ker(\phi^\C_f) = K_0[\MF(Q, f)]_{\tors}$.
\end{prop}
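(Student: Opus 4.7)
The plan is to establish the two inclusions of the equality separately.

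For the inclusion $K_0[\MF(Q,f)]_{\tors} \subseteq \ker(\phi^\C_f)$: the computation recorded in Section~\ref{thetavanishing} shows that, when $n = 2$, the target $L_1(B_\epsilon, F_f)$ is isomorphic to $KU^0(B_\epsilon, F_f) \cong \Z^\mu$ (using Theorem~\ref{bouquet} together with Bott periodicity), and hence is torsion-free. A group homomorphism into a torsion-free abelian group automatically annihilates the torsion subgroup, so this inclusion requires no further work.

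For the reverse inclusion $\ker(\phi^\C_f) \subseteq K_0[\MF(Q,f)]_{\tors}$: I would begin with $X \in \ker(\phi^\C_f)$ and apply Proposition~\ref{vanishing} to conclude that $\theta(X, -) \equiv 0$ on $K_0[\MF(Q,f)]$, i.e.\ that $X$ lies in the left radical of the Hochster theta pairing. The problem is thus reduced to showing that, for a one-dimensional isolated hypersurface singularity $Q/(f)$ with $Q = \C[x_1, x_2]$, the left radical of $\theta$ coincides with $K_0[\MF(Q, f)]_{\tors}$. This is exactly the content of Proposition 3.3 of \cite{dao2013decent} applied in the present setting, and is the use of Dao's result alluded to in the acknowledgements.

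The main obstacle is concentrated entirely in the hard direction and is fully absorbed by the cited result from \cite{dao2013decent}. In principle one could bypass this reference by relating $\theta$ on a plane curve singularity to an intersection form on a resolution of singularities and invoking its non-degeneracy modulo torsion; however this would essentially amount to reproving Proposition 3.3 of \cite{dao2013decent}. The cleanest strategy is therefore to combine Proposition~\ref{vanishing} with Dao's result for the hard inclusion, and to exploit the torsion-freeness of $L_1(B_\epsilon, F_f)$ for the easy one.
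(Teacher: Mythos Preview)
Your proposal is correct and follows the same route as the paper: the easy inclusion uses torsion-freeness of $L_1(B_\epsilon,F_f)\cong\Z^\mu$, and the hard inclusion combines Proposition~\ref{vanishing} with Proposition~3.3 of \cite{dao2013decent}. The only detail the paper makes explicit that you leave implicit is the identification $K_0[\MF(Q_{(x_1,x_2)},f)]\cong G_0(R_{(x_1,x_2)})/[R_{(x_1,x_2)}]$ needed to put Dao's statement in the present language.
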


\begin{proof}
$K_0[\MF(Q, f)]_{\tors} \subseteq \ker(\phi^\C_f)$ is
obvious. Suppose $[P] \in \ker(\phi^\C_f)$. By
Proposition~\ref{vanishing}, the map $\theta([P], -): K_0[\MF(Q,f)] \to \Z$
is the zero map. Set $R= Q/(f)$. Since
$K_0[\MF(Q_{(x_1, x_2)}, f)] \cong G_0(R_{(x_1, x_2)})/[R_{(x_1, x_2)}],$ an application of
Proposition 3.3 in \cite{dao2013decent} finishes the proof.
\end{proof}


\subsection{Kn\"orrer periodicity and Bott periodicity}
We now use our
constructions $\phi^\R_f$ and $\phi^\C_f$ to exhibit
a compatibility between Kn\"orrer periodicity (Theorem~\ref{kpint}) and
Bott periodicity. Set

$$Q:=\R[x_1, \dots, x_n] \text{, }Q':=\R[y_1, \dots, y_m]$$
and let 
$$f \in (x_1, \dots, x_n)
\subseteq Q \text{, } f' \in (y_1, \dots, y_m) \subseteq Q'$$ 
be quasi-homogeneous polynomials.

\begin{rem}
We are assuming $f$ and $f'$ are quasi-homogeneous so that the version
of the
Sebastiani-Thom homotopy equivalence for real polynomials is available to us (see
Section~\ref{realmilnorfiber}). Analogous versions of every result in
this section hold over $\C$ when both $f \in \C[x_1, \dots, x_n]$ and
$f' \in \C[y_1, \dots, y_m]$ are either quasi-homogeneous or $\IHS$.
\end{rem}

We now construct the negative Milnor fibers of $f$ and $f'$. Choose real numbers $\epsilon'', \delta'',$ such that the map
$$B_{\epsilon''} \cap (f \oplus f')^{-1}((-\delta'',0)) \to (-\delta'',0)$$
given by $x \mapsto (f \oplus f')(x)$ is a locally trivial fibration. Similarly, choose $\epsilon, \delta$ and $\epsilon', \delta'$, as well
as $t'' \in (-\delta'', 0)$,
so that the analogous maps
$$B_{\epsilon} \cap f^{-1}((-\delta, 0)) \to (-\delta, 0)$$
$$B_{\epsilon'} \cap (f')^{-1}((-\delta', 0)) \to (-\delta', 0)$$
are locally trivial fibrations, and also so that
\begin{itemize}
\item[(a)] $\epsilon, \epsilon'$ are sufficiently small so that
$B_\epsilon \times B_{\epsilon'} \subseteq B_{\epsilon''}$.
\item[(b)] $|t''| < \min\{\delta, \delta'\}$.
\end{itemize}

Set $F_f^-$, $F_{f'}^-$, and $F_{f \oplus f'}^-$ to be the negative Milnor fibers of
$f$, $f'$, and $f \oplus f'$ over $t''$. Assume they are nonempty.

\begin{rem}
We could proceed using positive Milnor fibers as well, but we use
negative fibers to stay consistent with Section~\ref{ABSsection}.
\end{rem}

Recall from Remark~\ref{K0pairing} that we have a map
$$K_0[\MF(Q, f)] \otimes K_0[\MF(Q', f')] \to K_0[\MF(Q \otimes_\R Q',
f \oplus f')]$$
given by $[P] \otimes [P'] \mapsto [P \otimes_{\MF} P'].$ The following proposition is the key technical result in this section.

\begin{prop}
\label{compatible}
There exists a map
$$\ST_{L_1}: L_1(B_\epsilon, F_f^-) \otimes L_1(B_{\epsilon'}, F_{f'}^-) \to L_1(B_{\epsilon''}, F^-_{f
  \oplus f'})$$
such that, given matrix factorizations $P$ and $P'$ of $f$ and $f'$,
respectively,
$$\ST_{L_1}(\phi^\R_f([P]) \otimes \phi^\R_{f'}([P'])) = \phi^\R_{f \oplus
  f'}([P \otimes_\MF P']).$$
\end{prop}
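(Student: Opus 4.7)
The plan is to construct $\ST_{L_1}$ as the composition
\begin{align*}
L_1(B_\epsilon, F_f^-) \otimes L_1(B_{\epsilon'}, F_{f'}^-)
&\xrightarrow{H^* \otimes (H')^*} L_1(CF_f^-, F_f^-) \otimes L_1(CF_{f'}^-, F_{f'}^-) \\
&\xrightarrow{\otimes_{L_1}} L_1(CF_f^- \times CF_{f'}^-,\, Y)
\xrightarrow{(G^*)^{-1}} L_1(B_{\epsilon''}, F^-_{f \oplus f'}),
\end{align*}
where $Y := CF_f^- \times F_{f'}^- \cup F_f^- \times CF_{f'}^-$. Here $H\colon (CF_f^-, F_f^-) \to (B_\epsilon, F_f^-)$ and $H'\colon (CF_{f'}^-, F_{f'}^-) \to (B_{\epsilon'}, F_{f'}^-)$ are the maps of pairs arising from the quasi-homogeneous Sebastiani--Thom construction as in Example~\ref{qhnotihs} (they do restrict to maps of pairs since $H(\cdot,1)=\operatorname{id}_{F_f^-}$), the middle arrow is the exterior product on $L_1$ from Section~\ref{$K$-theory}, and $G$ is the real-polynomial analogue of the homotopy equivalence of pairs in Remark~\ref{TScone}, available in the setting of Section~\ref{realmilnorfiber}. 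Since $G^*$ is then an isomorphism, this composition is well-defined.

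To verify compatibility, I would pull $\phi^\R_{f \oplus f'}([P \otimes_\MF P'])$ back via $G$ and compare with the class produced by the first two steps, both of which live in $L_1(CF_f^- \times CF_{f'}^-, Y)$. Let $V_i, V_j'$ denote the bundles on $B_\epsilon, B_{\epsilon'}$ associated to $P = (P_1 \rightleftarrows P_0)$ and $P' = (P_1' \rightleftarrows P_0')$. On the LHS, the bundles on $B_{\epsilon''}$ come from the $\R[x,y]$-modules $(P_1 \otimes P_0') \oplus (P_0 \otimes P_1')$ and $(P_0 \otimes P_0') \oplus (P_1 \otimes P_1')$ base-changed to $C(B_{\epsilon''})$; since $G$ factors through the inclusion $B_\epsilon \times B_{\epsilon'} \hookrightarrow B_{\epsilon''}$ of a contractible subspace where it becomes homotopic to $H \times H'$, these $G$-pullbacks agree, up to canonical isomorphism, with the $(H \times H')$-pullbacks of the external tensor products $V_i \boxtimes V_j'$ that appear on the RHS.

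The comparison of the boundary isomorphisms is the heart of the matter. Write $\tau_1 = (d_1 \otimes \id,\, \id \otimes d_1')$, $\tilde\tau_2 = (-\id \otimes d_0',\, d_0 \otimes \id)$, and $\tau_2 = \bigl(-\id \otimes d_1',\; d_1 \otimes \id\bigr)^T$, so that the odd-to-even differential of $P \otimes_\MF P'$ is $A = \begin{pmatrix} \tau_1 \\ \tilde\tau_2 \end{pmatrix}$. A direct matrix computation using $d_0 d_1 = f\cdot\id$ and $d_0' d_1' = f'\cdot\id$ yields $\tilde\tau_2 \circ \tau_2 = (f + f') \cdot \id$, which equals $t'' \cdot \id$ on $F^-_{f \oplus f'}$; hence $\pi := \tfrac{1}{t''}\tilde\tau_2|_{F^-_{f \oplus f'}}$ is a splitting of $\tau_2|_{F^-_{f \oplus f'}}$. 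Therefore $A|_{F^-_{f \oplus f'}}$ differs from $\bigl(\begin{smallmatrix}\tau_1 \\ \pi\end{smallmatrix}\bigr)$ by post-composition with the global bundle automorphism $\operatorname{diag}(\id,\, t''\cdot\id)$ of the target, which does not change the class in $L_1$ (two objects of $\fC_1(X,Y)$ that differ by a global automorphism of one of the bundles are isomorphic in $\fC_1$). By Lemma~\ref{splitting}, the $L_1$-product class is independent of the chosen splitting of $\tau_2$, so the LHS equals $j_2^{-1}$ of the $L_2$-class $[V_1 \otimes V_1', (V_1 \otimes V_0') \oplus (V_0 \otimes V_1'), V_0 \otimes V_0'; \tau_2, \tau_1]$ restricted to $F^-_{f \oplus f'}$; this is exactly the RHS, using the homotopy between $G$ and $H \times H'$ noted above to identify the $L_2$-pullbacks.

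The main obstacle is propagating the splitting $\pi$, which is naturally defined only on $F^-_{f \oplus f'}$, to a splitting on all of $Y$ after pullback, so that Lemma~\ref{splitting} can be applied with a single splitting everywhere. This is handled by choosing, on each piece of $Y$, the natural splitting arising from invertibility of $d_1'$ on $F_{f'}^-$ (respectively $d_1$ on $F_f^-$), gluing with a partition of unity on the overlap $F_f^- \times F_{f'}^-$, and invoking Lemma~\ref{splitting} together with the homotopy-invariance argument used in the proof of Proposition~\ref{ABSmap} to see that the resulting class does not depend on the specific gluing.
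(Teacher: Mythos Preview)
Your definition of $\ST_{L_1}$ coincides with the paper's, and your idea of passing to the $L_2$-class via Lemma~\ref{splitting} is a reasonable reorganization of the same argument. The gap is in the step where you invoke ``the homotopy between $G$ and $H \times H'$'' to identify the two $L_2$-classes over $Y$. The map $H \times H'$ does not carry $Y$ into $F^-_{f \oplus f'}$, so it is not a map of pairs $(CF_f^- \times CF_{f'}^-, Y) \to (B_{\epsilon''}, F^-_{f \oplus f'})$, and a homotopy $G \simeq H \times H'$ of maps into the contractible space $B_\epsilon \times B_{\epsilon'}$ is not a homotopy of pairs. Hence homotopy invariance of $L_2$ (or $L_1$) does not apply directly; you only get an identification of the underlying bundles, not of the boundary data $\tau_1, \tau_2$. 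Concretely, $G^*(\tau_1)$ at $(x,s,y,s') \in Y$ evaluates $d_1, d_1'$ at the point $g(x,s,y,s') = (H(x,\tfrac{1+s-s'}{2}), H'(y,\tfrac{1-s+s'}{2}))$, whereas the product class evaluates them at $(H(x,s), H'(y,s'))$; these differ.

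Your last paragraph misdiagnoses the obstacle: once you are at the $L_2$ level, the splitting is irrelevant by Lemma~\ref{splitting}, so the partition-of-unity gluing of splittings is beside the point. What is actually needed is an explicit homotopy of $\fC_1$-objects (or of $L_2$-elements) over $Y \times I$ interpolating between the two evaluations, together with a verification that the required invertibility or exactness persists throughout. The paper does precisely this: it writes down a one-parameter family $D$ with entries evaluated at $(a(T),b(T)) = \bigl(H(x,\tfrac{T(1-s-s')+2s}{2}), H'(y,\tfrac{T(1-s-s')+2s'}{2})\bigr)$, checks that $f(a(T)) + f'(b(T)) = (T + (1-T)(s+s'))\,t'' \neq 0$ on $Y \times I$ (since $s+s' \in [1,2]$ there), and then applies Proposition~9.2 of \cite{atiyah1964clifford}. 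Supplying this verification would close the gap in your argument.
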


\begin{proof}
Write
$$P=(P_1 \darrow{d_1}{d_0} P_0) \text{, } P'=(P_1' \darrow{d_1'}{d_0'} P_0')$$
and
$$\Phi^\R_f(P)=[V_1, V_0; d_1|_{F_f^-}] \text{, } \Phi^\R_{f'}(P')=[V_1', V_0'; d_1'|_{F_{f'}^-}].$$
We note that $$\phi^\R_{f \oplus
  f'}([P \otimes_\MF P'])=[(V_1 \otimes V_0') \oplus (V_0 \otimes V_1'), (V_0 \otimes V_0')
\oplus (V_1 \otimes V_1'); A],$$
where $A$ is the restriction of the matrix 
$$\begin{pmatrix} d_1 \otimes \id  & \id
  \otimes d_1'  \\ -\id \otimes d'_0 &
 d_0 \otimes
\id\end{pmatrix}$$
to $F^-_{f
  \oplus f'}$.

As in Section~\ref{TStopp}, choose a continuous injection $H:  CF_f^-
\to B_\epsilon$ such that
\begin{itemize}
\item $H(x, 1) = x \in F_f^- \subseteq B_\epsilon$,
\item $H(-, s): F_f^- \to B_\epsilon$ maps into the Milnor fiber $B_\epsilon
  \cap f^{-1}(st'')$ for $s \in (0, 1)$, and 
\item $H(x, 0) = 0$ for all $x \in F_f^-$
\end{itemize}

Choose $H': CF_{f'}^- \to B_{\epsilon'}$ similarly. The maps of pairs 
$$l: (CF_f^- , F_f^-)
\to (B_\epsilon, F_f^-) \text{, } l': (CF_{f'}^-, F_{f'}^-)
\to (B_{\epsilon'}, F_{f'}^-)$$ 
induced by $H$ and $H'$ yield isomorphisms on $L_1$ upon
pullback; this is immediate from the long exact sequence in
$KO$-theory and the naturality of the map $\chi$
from Section~\ref{$K$-theory} with respect to maps of pairs.

Recall from Section~\ref{$K$-theory} that we have a map
$$L_1(CF_{f}^-, F_f^-) \otimes L_1(CF_{f'}^-, F_{f'}^-) \to L_1(CF_{f}^- \times CF_{f'}^-, CF_{f}^-
\times F_{f'}^- \cup F_f^- \times CF_{f'}^-)$$
denoted by 
$$[V] \otimes [V'] \mapsto [V] \otimes_{L_1} [V'].$$

Define
$$\ST_{L_1}: L_1(B_\epsilon, F_f^-) \otimes L_1(B_{\epsilon'}, F_{f'}^-) \to L_1(B_{\epsilon''}, F^-_{f
  \oplus f'})$$
to be given by
$$[V] \otimes [V'] \mapsto (G^*)^{-1}(l^*([V]) \otimes_{L_1}
(l')^*([V'])),$$
where 
$$G: (CF_{f}^-\times CF_{f'}^-, CF_{f}^- \times F^-_{f'} \cup F^-_f \times
CF_{f'}^-) \to (B_{\epsilon''}, F^-_{f \oplus f'})$$
is as in Section~\ref{realmilnorfiber}. Recall that $G$ is an explicit
formulation of the Sebastiani-Thom
homotopy equivalence.

We now compute 
$l^*(\phi^\R_f(P)) \otimes_{L_1} (l')^*(\phi^\R_f(P'))$
explicitly. A splitting of the restriction of
$$\begin{pmatrix} -\id \otimes (H')^*(d_1') \\ H^*(d_1) \otimes \id \end{pmatrix}$$
to $ CF_{f}^-
\times F_{f'}^- \cup F_f^- \times CF_{f'}^-$ is given,
on the fiber over $(x,s,y, s')$, by 
$$\frac{1}{f(H(x,s)) + f'(H'(y,s'))} \begin{pmatrix} -\id \otimes (H')^*(d_0') & H^*(d_0)
  \otimes \id \end{pmatrix}$$
(notice that $f(H(x,s)) + f'(H'(y,s'))=(s+s')t'' \ne 0$ when $(x,s, y, s') \in CF_{f}^-
\times F_{f'}^- \cup F_f^- \times CF_{f'}^-$, since either $s$ or $s'$
is equal to 1). Thus, by the discussion at the end of Section~\ref{$K$-theory}, the product
$$l^*([V_1, V_0;
d|_{F_f^-}]) \otimes_{L_1}  (l')^*([V_1', V_0'; d'|_{F_{f'}^-}])$$ 
is equal to
$$[(H^*(V_1) \otimes (H')^*(V_0')) \oplus (H^*(V_0)
\otimes (H')^*(V_1')), (H^*(V_0) \otimes (H')^*(V_0')) \oplus (H^*(V_1)
\otimes (H')^*(V_1')); B],$$
where $B$ is given, on the fiber over $(x,s, y, s') \in CF_{f}^-
\times F_{f'}^- \cup F_f^- \times CF_{f'}^-$, by the matrix
$$\begin{pmatrix} H^*(d_1) \otimes \id  & \id  \otimes (H')^*(d_1')  \\ \frac{1}{f(H(x,s)) + f'(H'(y,s'))} (-\id \otimes (H')^*(d'_0)) &
\frac{1}{f(H(x,s)) + f'(H'(y,s'))}  (H^*(d_0) \otimes
\id)\end{pmatrix}.$$

We wish to show that, upon applying $(G^*)^{-1}$ to this class, one obtains
$$[(V_1 \otimes V_0') \oplus (V_0 \otimes V_1'), (V_0 \otimes V_0')
\oplus (V_1 \otimes V_1');C],$$
where $C$ is the restriction of the matrix
$$\begin{pmatrix} d_1 \otimes \id  & \id  \otimes d_1'  \\ \frac{1}{t''} (-\id \otimes d'_0) &
\frac{1}{t''}  (d_0 \otimes
\id)\end{pmatrix}$$
to $F^-_{f \oplus f'}$.
This will finish the proof, since the class
$$[(V_1 \otimes V_0') \oplus (V_0 \otimes V_1'), (V_0 \otimes V_0')
\oplus (V_1 \otimes V_1');C]$$ 
is clearly equal to
$$[(V_1 \otimes V_0') \oplus (V_0 \otimes V_1'), (V_0 \otimes V_0')
\oplus (V_1 \otimes V_1');A].$$

Observe that we have an object
$$[((H^*(V_1) \otimes (H')^*(V_0')) \oplus (H^*(V_0) \otimes (H')^*(V_1'))) \times I, ((H^*(V_0) \otimes (H')^*(V_0'))
\oplus (H^*(V_1) \otimes (H')^*(V_1'))) \times I; D]$$
in $\fC_1(CF_{f}^-\times CF_{f'}^- \times
I, (CF_{f}^- \times F^-_{f'} \cup F^-_f \times
CF_{f'}^-) \times I),$
where $D$ is given, on the fiber over $$(x,s,y, s', T) \in (CF_{f}^- \times F^-_{f'} \cup F^-_f \times
CF_{f'}^-) \times I,$$ by the matrix
$$\begin{pmatrix} H^*(d_1) \otimes \id  & \id  \otimes (H')^*(d_1') \\ \frac{1}{f(a(T)) + f'(b(T))} (-\id \otimes (H')^*(d'_0)) &
\frac{1}{f(a(T)) + f'(b(T))}  (H^*(d_0) \otimes
\id)\end{pmatrix}.$$

Here, $f$, $f'$, and the entries of $d_1, d_1', d_0, d_0'$ are
evaluated at the point
$$(a(T), b(T)):=(H(x, \frac{T(1-s'-s) +2s}{2}), H'(y, \frac{T(1-s'-s) + 2s'}{2})).$$

Notice that $f(a(T)) + f'(b(T)) \ne 0$ for all $$(x,s,y, s', T) \in (CF_{f}^- \times F^-_{f'} \cup F^-_f \times
CF_{f'}^-) \times I,$$ so this matrix is indeed an isomorphism on
every fiber over $(CF_{f}^- \times F^-_{f'} \cup F^-_f \times
CF_{f'}^-) \times I$. 

Restricting to $T=0$, one obtains
the object
$$((H^*(V_1) \otimes (H')^*(V_0')) \oplus (H^*(V_0)
\otimes (H')^*(V_1')), (H^*(V_0) \otimes (H')^*(V_0')) \oplus (H^*(V_1)
\otimes (H')^*(V_1')); B).$$
Restricting to $T=1$ and applying $(G^*)^{-1}$, one obtains
$$((V_1 \otimes V_0') \oplus (V_0 \otimes V_1'), (V_0 \otimes V_0')
\oplus (V_1 \otimes V_1');C).$$ 
Now apply Proposition 9.2 in \cite{atiyah1964clifford}.
\end{proof}

\begin{rem}
\label{induced}
It follows easily from the naturality of the map
$\chi$ from Section~\ref{$K$-theory} and Remark~\ref{10.4} that $\ST_{L_1}$ induces a map
$$\ST_{KO}: KO^0(B_\epsilon, F_f^-) \otimes KO^0(B_{\epsilon'}, F_{f'}^-) \to KO^0(B_{\epsilon''}, F^-_{f
  \oplus f'}).$$
\end{rem}

\begin{rem}
\label{reasonforname} We emphasize that the group homomorphism
$\ST_{L_1}$ in Proposition~\ref{compatible} is given by the
composition of the product in topological $K$-theory with the
inverse of the pullback along a 
specific formulation of the Sebastiani-Thom homotopy
equivalence. Hence, Proposition~\ref{compatible} yields a precise
sense in which the tensor product of matrix factorizations is related
to the Sebastiani-Thom homotopy equivalence.
\end{rem}

Let us now consider the case where $Q'=\R[u_1, \dots, u_8]$ and $f' =
-u_1^2 -\cdots - u_8^2$. By Theorem~\ref{rk} and
Remark~\ref{powerseries}, $[\MF(Q', f')] \cong [\MF(\R, 0)]$. It
follows that $K_0[\MF(\R[u_1, \dots, u_8],
-u_1^2- \cdots - u_8^2)]$ is isomorphic to $\Z$, generated by the class represented by the
matrix factorization $X$ constructed in the proof of Theorem~\ref{rk}.

Also, $F^-_{-u_1^2 - \cdots - u_8^2}$ is
homeomorphic to $S^7$, and so
$L_1(B_{\epsilon'}, F^-_{-u_1^2 - \cdots - u_8^2})$ is isomorphic to $\Z$. This group is generated by
$\phi^\R_{-u_1^2 - \cdots - u_8^2}(X);$ thus, $\phi^\R_{-u_1^2 - \cdots - u_8^2}([X])$ is a
Bott element in the group \newline $L_1(B_{\epsilon'}, F_{-u_1^2 - \cdots - u_8^2}) \cong
\widetilde{KO}^0(S^8)$; we shall denote by $\beta_\R$ the map
$$KO^0(B_\epsilon, F_f^-) \to KO^0(B_\epsilon, F_f^-) \otimes KO^0(B_{\epsilon'}, F_{-u_1^2 - \cdots - u_8^2})$$
given by $(\chi \otimes \chi) \circ (- \otimes \phi^\R_{-u_1^2 - \cdots - u_8^2}([X]))
\circ \chi^{-1}$. $\beta_\R$ is the Bott periodicity isomorphism.

Since real Kn\"orrer periodicity may be induced by tensoring with the matrix
factorization $X$, we will denote by
$K_\R$ the
map 
$$K_0[\MF(Q, f)] \to K_0[\MF(Q[u_1, \dots, u_8],
f -u_1^2- \cdots - u_8^2)]$$
given by $- \otimes_\MF [X]$.

The following result gives a precise sense in which Bott periodicity
and Kn\"orrer periodicity are compatible; it follows immediately from
Proposition~\ref{compatible}. We emphasize that a virtually identical proof
yields a similar result involving positive Milnor fibers.

\begin{thm}
\label{KPBPR}
Let $f \in Q=\R[x_1, \dots, x_n]$ be a quasi-homogeneous polynomial such
that $F_f^- \ne \emptyset$, and set $q=-u_1^2 - \cdots -u_8^2$. Then the diagram
$$
\xymatrix{
K_0[\MF(Q, f)] \ar[r]^-{\chi \circ \phi^\R_f} \ar[dd]^-{K_\R} & KO^0(B_\epsilon, F_f^-) \ar[d]^-{\beta_\R} \\
& KO^0(B_\epsilon, F_f^-) \otimes KO^0(B_{\epsilon'},
  F^-_{q}) \ar[d]^-{\ST_{KO}} \\
  K_0[\MF(Q[u_1, \dots, u_8],
f + q)] \ar[r]^-{\chi \circ \phi^\R_{f + q}} & KO^0(B_{\epsilon''}, F^-_{f
  + q}) 
}
$$
commutes.
\end{thm}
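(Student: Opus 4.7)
The plan is to verify commutativity by chasing an arbitrary class $[P]\in K_0[\MF(Q,f)]$ around the diagram, invoking Remark~\ref{induced} and Proposition~\ref{compatible} at the end to match the two images in $KO^0(B_{\epsilon''}, F^-_{f+q})$. Since both composites are $\Z$-linear, it suffices to check equality on generators of $K_0[\MF(Q,f)]$.

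Going right and then down, I first form $\chi(\phi^\R_f([P]))\in KO^0(B_\epsilon, F_f^-)$. Unpacking the definition of $\beta_\R$ given just before the theorem, this element is then sent to the simple tensor $\chi(\phi^\R_f([P]))\otimes \chi(\phi^\R_q([X]))$ in $KO^0(B_\epsilon, F_f^-)\otimes KO^0(B_{\epsilon'}, F^-_q)$, where $X$ is the generator of $K_0[\MF(\R[u_1,\ldots,u_8], q)]$ constructed in the proof of Theorem~\ref{rk}. Applying $\ST_{KO}$ produces a class in $KO^0(B_{\epsilon''}, F^-_{f+q})$. Going down and then right, $K_\R$ sends $[P]$ to $[P\otimes_\MF X]$, and applying $\chi\circ\phi^\R_{f+q}$ lands in the same target.

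The key identification is that, by Remark~\ref{induced}, the pairing $\ST_{KO}$ is defined precisely so that $\ST_{KO}(\chi(a)\otimes\chi(b))=\chi(\ST_{L_1}(a\otimes b))$ for any $a\in L_1(B_\epsilon, F_f^-)$ and $b\in L_1(B_{\epsilon'}, F_q^-)$. Therefore the right-then-down image equals $\chi(\ST_{L_1}(\phi^\R_f([P])\otimes \phi^\R_q([X])))$, and Proposition~\ref{compatible} immediately rewrites this as $\chi(\phi^\R_{f+q}([P\otimes_\MF X]))$, which is precisely the down-then-right image. This completes the diagram chase.

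The main obstacle, if any, is simply notational bookkeeping: the genuine geometric content — identifying tensor products of matrix factorizations with the Sebastiani--Thom pairing in $L_1$ — has already been absorbed by Proposition~\ref{compatible}, and the compatibility of $\ST_{L_1}$ with $\ST_{KO}$ under $\chi$ was arranged by the definition of $\ST_{KO}$ in Remark~\ref{induced}. So the theorem really is, as claimed, an immediate corollary.
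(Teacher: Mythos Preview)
Your proposal is correct and mirrors the paper's own argument: the paper states that the theorem follows immediately from Proposition~\ref{compatible}, and your diagram chase via the definitions of $K_\R$, $\beta_\R$, and $\ST_{KO}$ (together with Remark~\ref{induced}) is exactly the unpacking of that one-line claim.
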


We state the analogous version of this result over the complex
numbers. Let $Y$ denote the matrix factorization $\C[u,v]
\darrow{u+iv}{u-iv} \C[u,v]$ of $u^2 + v^2$, and let 
$$K: K_0[\MF(Q, f)] \to K_0[\MF(Q[u,v],
f + u^2+v^2)]$$
be given by $- \otimes_\MF [Y]$. $K_0[\MF(\C[u,v], u^2 +
v^2)] \cong \Z$, and the group is generated by $[Y]$. Also, by
Theorem~\ref{bouquet}, $F_{u^2 +v^2}$ is homotopy equivalent to
$S^1$. Thus, $L_1(B_{\epsilon'}, F^-_{u^2 + v^2})$ is
isomorphic to $\Z$, generated by 
$\phi^\C_{u^2 + v^2}([Y])$. $\phi^\C_{u^2 + v^2}([Y])$ is a
Bott element in the group $L_1(B_{\epsilon'}, F_{u^2+v^2}) \cong
\widetilde{KU}^0(S^2)$; we shall denote by $\beta$ the map
$$KU^0(B_\epsilon, F_f) \to KU^0(B_\epsilon, F_f) \otimes
KU^0(B_{\epsilon'}, F_{u^2 + v^2})$$
given by $(\chi \otimes \chi) \circ (- \otimes \phi^\C_{u^2 + v^2}([Y]))
\circ \chi^{-1}$. $\beta$ is the complex Bott periodicity
isomorphism. Let $\ST_{KU}$ denote the pairing on relative
$KU$-theory induced by the complex version of the pairing
$\ST_{L_1}$. The following is a complex analogue of Theorem~\ref{KPBPR}:

\begin{thm}
\label{KPBP}
Let $f \in (x_1, \dots, x_n) \subseteq Q=\C[x_1, \dots, x_n]$ and
suppose either
\begin{itemize}
\item The hypersurface $\C[x_1,
\dots, x_n]_{(x_1, \dots, x_n)}/(f)$ is $\IHS$ (see
Definition~\ref{IHS}), or
\item $f$ is quasi-homogeneous.
\end{itemize}
Then the diagram
$$
\xymatrix{
K_0[\MF(Q, f)] \ar[r]^-{\chi \circ \phi^\C_f} \ar[dd]^{K} & KU^0(B_\epsilon, F_f) \ar[d]^{\beta} \\
& KU^0(B_\epsilon, F_f) \otimes KU^0(B_{\epsilon'},
  F_{u^2 + v^2}) \ar[d]^-{\ST_{KU}} \\
K_0[\MF(Q[u,v],
f + u^2+ v^2)] \ar[r]^-{\chi \circ \phi^\C_{f + u^2 + v^2}} & KU^0(B_{\epsilon''}, F_{f
  +u^2 + v^2}) \\
}
$$
commutes.
\end{thm}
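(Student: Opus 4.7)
The strategy is to mirror the stated proof of Theorem~\ref{KPBPR}, replacing real Milnor fibers and $KO$-theory by complex Milnor fibers and $KU$-theory throughout, and then invoke the complex analogue of Proposition~\ref{compatible} applied to the specific pair $(f, u^2+v^2)$ with matrix factorizations $P$ and $Y$.

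First I would establish a complex analogue of Proposition~\ref{compatible}: under the hypotheses that $f$ and $f'$ are each either $\IHS$ or quasi-homogeneous, there is a pairing
\[
\ST_{L_1}^{\C}\colon L_1(B_\epsilon, F_f) \otimes L_1(B_{\epsilon'}, F_{f'}) \to L_1(B_{\epsilon''}, F_{f \oplus f'})
\]
such that $\ST_{L_1}^{\C}(\phi^\C_f([P]) \otimes \phi^\C_{f'}([P'])) = \phi^\C_{f \oplus f'}([P \otimes_\MF P'])$. The proof is word-for-word the same as that of Proposition~\ref{compatible}, with three ingredients substituted: (i) use the complex Sebastiani-Thom map $G$ recalled in Remark~\ref{TScone}, whose existence is given by Theorem~\ref{TStop} and by Remark~\ref{qhts} in the quasi-homogeneous case; (ii) use the complex deformation maps $H \colon CF_f \to B_\epsilon$ and $H' \colon CF_{f'} \to B_{\epsilon'}$, available by Lemma 2.10 of \cite{arnold2012singularities} in the $\IHS$ case and by Example~\ref{qhnotihs} otherwise; (iii) use the complex vector bundle version of the external product from Section~\ref{$K$-theory}. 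The homotopy in $T$ carried out at the end of the proof of Proposition~\ref{compatible} goes through verbatim, since the only facts it uses are that the denominator $f(a(T)) + f'(b(T))$ is nonvanishing on the subspace being restricted to and that Proposition~9.2 of \cite{atiyah1964clifford} applies equally to complex bundles.

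Next I would specialize to $Q' = \C[u,v]$, $f' = u^2+v^2$, and $P' = Y$. Both hypotheses are satisfied for $u^2+v^2$ (it is quasi-homogeneous and defines an $\IHS$), so the complex Proposition~\ref{compatible} applies. The map $K$ is by definition $-\otimes_\MF [Y]$, and $\beta$ is by definition $(\chi \otimes \chi) \circ (-\otimes \phi^\C_{u^2+v^2}([Y])) \circ \chi^{-1}$. By the complex analogue of Remark~\ref{induced} (which uses naturality of $\chi$), the pairing $\ST_{KU}$ is induced from $\ST_{L_1}^\C$ via $\chi$, so that the complex analogue of Remark~\ref{10.4} gives $\ST_{KU}(\chi(a) \otimes \chi(b)) = \chi(\ST_{L_1}^\C(a \otimes b))$ for $a \in L_1(B_\epsilon, F_f)$ and $b \in L_1(B_{\epsilon'}, F_{u^2+v^2})$.

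With these pieces in place, the commutativity is immediate: starting from $[P] \in K_0[\MF(Q,f)]$, the right-then-down composition sends it to
\[
\ST_{KU}\bigl(\chi(\phi^\C_f([P])) \otimes \chi(\phi^\C_{u^2+v^2}([Y]))\bigr) = \chi\bigl(\ST_{L_1}^\C(\phi^\C_f([P]) \otimes \phi^\C_{u^2+v^2}([Y]))\bigr),
\]
which by the complex Proposition~\ref{compatible} equals $\chi(\phi^\C_{f+u^2+v^2}([P \otimes_\MF Y]))$, and this is precisely the image of $[P]$ under the down-then-right composition. The main obstacle in this program is not the diagram-chase itself but the careful verification that the proof of Proposition~\ref{compatible} transfers to the complex setting under the disjunctive hypothesis (\,$\IHS$ or quasi-homogeneous\,); once the complex Sebastiani-Thom map and the deformation $H$ are produced, the rest is formal.
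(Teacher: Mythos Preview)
Your proposal is correct and matches the paper's approach exactly: the paper presents Theorem~\ref{KPBP} as the complex analogue of Theorem~\ref{KPBPR}, which in turn is stated to follow immediately from Proposition~\ref{compatible}, and your argument carries out precisely this transfer and specialization to $f'=u^2+v^2$, $P'=Y$. The only additional content you supply---checking that the proof of Proposition~\ref{compatible} goes through over $\C$ under the disjunctive hypothesis by invoking Remark~\ref{TScone}, Remark~\ref{qhts}, and Example~\ref{qhnotihs}---is exactly what the paper leaves implicit.
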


\bibliographystyle{amsalpha}
\bibliography{Bibliography}

\end{document}